\documentclass[11pt]{amsart}
\usepackage{amsmath,amssymb,amsthm}
\usepackage{tikz,xstring,ifthen}
\usepackage[numbers,comma,square,sort&compress]{natbib}
\usepackage{mathabx}   
\usepackage{stackengine}
\usepackage{caption}
\usepackage{graphics}
\usepackage{subcaption} 
\RequirePackage{color}
\RequirePackage[colorlinks,urlcolor=my-blue,linkcolor=my-red,citecolor=my-green]{hyperref}

\numberwithin{figure}{section}



\usepackage{enumerate}

\usepackage{scalerel}    
\usepackage{stmaryrd}   
\usepackage{mathabx}   

\definecolor{my-blue}{rgb}{0.0,0.0,0.6}
\definecolor{my-red}{rgb}{0.5,0.0,0.0}
\definecolor{my-green}{rgb}{0.0,0.5,0.0}
\definecolor{nicos-red}{rgb}{0.75,0.0,0.0}


\addtolength{\hoffset}{-.5in} 
\addtolength{\textwidth}{1.6in}
\addtolength{\oddsidemargin}{-0.3in}
\addtolength{\evensidemargin}{-0.3in}
\addtolength{\textheight}{0.8in}
\addtolength{\topmargin}{-0.4in}



\newtheorem{theorem}{\sc Theorem}[section]
\newtheorem{lemma}[theorem]{\sc Lemma}

\newtheorem{corollary}[theorem]{\sc Corollary}

\numberwithin{equation}{section}
\theoremstyle{remark}
\newtheorem{remark}[theorem]{Remark}

\newcommand{\be}{\begin{equation}}
\newcommand{\ee}{\end{equation}}
\newcommand{\beq}{\begin{equation}}
\newcommand{\eeq}{\end{equation}}

\newcommand{\nn}{\nonumber}
\providecommand{\abs}[1]{\vert#1\vert}

\newcommand{\fl}[1]{\lfloor{#1}\rfloor} 
\newcommand{\ce}[1]{\lceil{#1}\rceil}

\def\cC{\mathcal{C}}  
\def\cF{\mathcal{F}}

\def\cI{\mathcal{I}}

\def\bE{\mathbb{E}} 

\def\bP{\mathbb{P}}

\def\bR{\mathbb{R}}
\def\bZ{\mathbb{Z}}

 \def\Z{\bZ}  \def\R{\bR}

\def\evec{e}

\def\zvec{\mathbf{z}}

\def\Jvec{\mathbf{J}}
\def\Yvec{\mathbf{Y}}

\def\w{\omega}

\def\e{\varepsilon}

\def\ddd{\displaystyle}

\def\m1{\mathbf{1}}

 \def\Vvv{{\rm\mathbb{V}ar}}  \def\Cvv{{\rm\mathbb{C}ov}} 
 \def\wt{\widetilde}  \def\wh{\widehat}  \def\wc{\widecheck}
 
\def\E{\bE}
\def\P{\bP} 



\def\funct lp{L} 
\def\funct lpbar{\bar L} 





\def\Gpp{G}
\def\gpp{g}




\newcommand{\argmax}[1]{\underset{#1}{\arg\max}\;}

\def\geoup#1{#1^{\scaleobj{1.2}\star}}        
\def\geodown#1{#1_{\scaleobj{1.2}\star}}

\def\xiup#1#2{#1^{\scaleobj{1.2}\star}_{#2}}       
\def\xidown#1#2{#1_{{\scaleobj{1.2}\star} #2}}

\newcommand{\rhodown}[1]{{#1}_{\scaleobj{1.2}\star}} 
\newcommand{\rhoup}[1]{{#1}^{\scaleobj{1.2}\star}} 



\definecolor{darkgreen}{rgb}{0.0,0.5,0.0}
\definecolor{darkblue}{rgb}{0.0,0.0,0.3}
\definecolor{nicosred}{rgb}{0.65,0.1,0.1}
\definecolor{light-gray}{gray}{0.7}
\allowdisplaybreaks[1]

\usepackage{filemod}

 
\def\cif1{v}   

\def\deq{\overset{d}=}



   



\def\eventcB{U}
 
\def\eventcD{D}

\def\Dop{D}    
\def\Rop{R}  
\def\Sop{S}    


\def\arr{a} 
\def\barr{b}
\def\serv{s}
\def\depa{d} 
\def\sojo{t}
\def\arrv{\mathbf\arr}  \def\barrv{\mathbf\barr} 
\def\servv{\mathbf\serv}
\def\depav{\mathbf\depa} 
\def\sojov{\mathbf\sojo}

\def\aaa#1{\alpha[#1]}  

\def\rim#1{\widehat{#1}} 

\newcommand{\lzb}{\llbracket}   
\newcommand{\rzb}{\rrbracket}   

\newcommand\bbullet{{\raisebox{0.5pt}{\scaleobj{0.6}{\bullet}}}} 
\newcommand\brbullet{{\raisebox{-1pt}{\scaleobj{0.5}{\bullet}}}} 



\usepackage[nomessages]{fp}
\newcounter{usedm}
\newcounter{usedn}
\newcommand*{\lppwl}[6]{
	
	\FPeval{\m}{round(#5*(#3-#1)/(#3+#4-#1-#2),0)}
	
	\FPeval{\n}{round(#5*(#4-#2)/(#3+#4-#1-#2),0)}
	
	\FPeval{\stlength}{(#3+#4-#1-#2)/#5}
	
	\setcounter{usedm}{0}
	\setcounter{usedn}{0}
	
	\foreach\i in{1,...,{#5}}{
		\FPrandom{\x}
		
		\FPeval{\y}{(\m-\theusedm)/(\m+\n-\theusedm-\theusedn)}
		
		\FPeval{\startx}{#1+(\theusedm)*\stlength}
		\FPeval{\starty}{#2+(\theusedn)*\stlength}
		
		\ifthenelse{\lengthtest{\x pt > \y pt}}{
			\FPset{\endx}{\startx}
			\FPadd{\endy}{\starty}{\stlength}
			\stepcounter{usedn}
		}
		{
			\FPadd{\endx}{\startx}{\stlength}
			\FPset{\endy}{\starty}
			\stepcounter{usedm}
		}
		\draw[#6](\startx,\starty) -- (\endx,\endy);
	};
}




\begin{document}

 
\title[Bi-infinite geodesics]{Non-existence of bi-infinite geodesics in the exponential corner growth model}

\author[M.~Bal\'azs]{M\'arton Bal\'azs}
\address{M\'arton Bal\'azs\\ University of Bristol\\  School of Mathematics\\ Fry Building\\ Woodland Rd.\\   Bristol BS8 1UG\\ UK.}
\email{m.balazs@bristol.ac.uk}
\urladdr{https://people.maths.bris.ac.uk/~mb13434/}
\thanks{M.\ Bal\'azs was partially supported by EPSRC's EP/R021449/1 Standard Grant.}

\author[O.~Busani]{Ofer Busani}
\address{Ofer Busani\\ University of Bristol\\  School of Mathematics\\ Fry Building\\ Woodland Rd.\\   Bristol BS8 1UG\\ UK.}
\email{o.busani@bristol.ac.uk}
\urladdr{https://people.maths.bris.ac.uk/~di18476/}
\thanks{O. Busani was supported by EPSRC's EP/R021449/1 Standard Grant.}

\author[T.~Sepp\"al\"ainen]{Timo Sepp\"al\"ainen}
\address{Timo Sepp\"al\"ainen\\ University of Wisconsin-Madison\\  Mathematics Department\\ Van Vleck Hall\\ 480 Lincoln Dr.\\   Madison WI 53706-1388\\ USA.}
\email{seppalai@math.wisc.edu}
\urladdr{http://www.math.wisc.edu/~seppalai}
\thanks{T.\ Sepp\"al\"ainen was partially supported by  National Science Foundation grant  DMS-1854619 and by  the Wisconsin Alumni Research Foundation.} 

\keywords{bi-infinite, corner growth model,  directed percolation, geodesic, random growth model, last-passage percolation, queues}
\subjclass[2000]{60K35, 65K37} 
\date{\today} 

\begin{abstract}
This paper gives a self-contained proof of the non-existence of nontrivial bi-infinite geodesics in directed planar last-passage percolation with exponential weights. The techniques used are couplings, coarse graining, and control of geodesics through planarity and estimates derived from increment-stationary versions of the last-passage percolation process. 
 \end{abstract}
\maketitle

\setcounter{tocdepth}{1}
\tableofcontents

\section{Introduction}

\subsection{Bi-infinite geodesics in random growth} 
Since their inception over 50 years ago in the work of Eden \cite{eden-61} and Hammersley and Welsh \cite{Hamm-wels-65}, random growth models have been   central drivers of the mathematical theory 
of  spatial random processes. 
 Particularly important classes of growth models are undirected {\it first-passage percolation} (FPP) and  directed  {\it last-passage percolation} (LPP) where growth proceeds   along optimal paths called {\it geodesics}.    The structure of these geodesics has been a challenging object of study.   

Under natural assumptions, the existence of a geodesic between  two  points in space is   straightforward. A compactness argument   gives the existence of  a   semi-infinite geodesic, that is, a one-sided infinite    path  
that furnishes the geodesic between any two of its points. 
The existence or non-existence of   bi-infinite geodesics  has turned out to be a very hard problem. 
This question   was first posed to H.~Kesten by H.~Furstenberg in the context of FPP  \cite[p.~258]{kest-86}.  Apart from its significance for random growth, this existence issue  is   tied to questions about ground states of certain disordered models of statistical physics (\cite[p.~105]{auff-damr-hans-17-ams}, \cite[Ch.~1]{newm-97-book}).


The development of mathematical techniques for    infinite  geodesics in two-dimensional FPP and LPP began with the work of C.~Newman and coauthors  
in the 1990s \cite{newm-icm-95}. 
    Licea and Newman \cite{lice-newm-96}  ruled out   directed bi-infinite geodesics with given  direction in an  unknown set of full Lebesgue measure. 
 Much more  recently, a bi-infinite geodesic in any fixed direction has been ruled out, but  subject to a local regularity condition on the limit shape,   by \cite{geor-rass-sepp-17-geod} in LPP and by   \cite{ahlb-hoff, damr-hans-17} in FPP.  The new approach here was based on Busemann functions.  
  Bi-infinite FPP geodesics have also been ruled out in certain restricted subsets of the lattice such as half-planes \cite{auff-damr-hans-15,wehr-woo}.   
However, despite all the effort,   a feasible strategy for solving the bi-infinite existence  problem in FPP without restrictive  assumptions  is not presently visible.  


In   exactly solvable planar directed LPP,  techniques have  evolved to the point where  the existence problem can be given a complete solution.   The first proof of the nonexistence of bi-infinite geodesics in planar LPP with exponential weights appeared in the 2018 preprint \cite{basu-hoff-sly-arxiv-18} of Basu, Hoffman and Sly.  Their work relies on   fluctuation and moderate deviation  estimates for the passage times  that come from integrable probability. 
These estimates were originally  obtained   through combinatorial analysis,
asymptotic analysis of Fredholm determinants, and random matrix methods.  Further results  from these estimates were   derived  in the preprint \cite{basu-sido-sly-arxiv-14} by Basu, Sidoravicius and Sly, in particular to control transversal fluctuations of geodesics,  and then applied to the bi-infinite geodesic problem in \cite{basu-hoff-sly-arxiv-18}.     

The elaborate multilayered effort behind \cite{basu-hoff-sly-arxiv-18} is remarkable.  
   It raises an obvious  question, namely,  whether  ruling out bi-infinite geodesics requires  the power of integrable probability. 

The present paper answers this question in the negative by providing a second  proof of the nonexistence of bi-infinite geodesics that reduces the technical requisites considerably.   Nothing beyond standard probability tools such as coupling and coarse graining is needed.  The features specific to the exponential LPP utilized are the  independence properties of its stationary version.  These  independence properties cannot all hold for general i.i.d.\ weights.  But if they were replaced with sufficient mixing,  the estimates behind  our proof would remain provable in weaker form.    

Next we state the  main result and then relate our proof to existing  literature.  In particular, we contrast our work with  \cite{basu-hoff-sly-arxiv-18} in more detail. 


\subsection{Main result} 
   The model studied   is a version of nearest-neighbor directed LPP on the planar integer lattice, also known as the {\it corner growth model} (CGM).  
 	Let $\omega=\{\omega_x\}_{x\in \mathbb{Z}^2}$ be an assignment of random weights on the vertices of $\mathbb{Z}^2$.  The weights $\omega_x$ are independent and identically distributed  (i.i.d) random variables with rate one exponential distribution, that is,  $\P(\w_x>t)=e^{-t}$ for each $x\in\Z^2$ and real $t\ge 0$.    The last-passage value $G_{x,y}$ for coordinatewise ordered points $x\le y$ on $\Z^2$ is defined by 
		\be\label{v:G}  
	\Gpp_{x,y}=\max_{x_{\brbullet}\,\in\,\Pi_{x,y}}\sum_{k=0}^{\abs{y-x}_1}\w_{x_k}, 
	\ee
	where   $\Pi_{x,y}$ is the set of  {\it nearest-neighbor  up-right  paths} $x_{\bbullet}=(x_k)_{k=0}^n$  that start at  $x_0=x$ and  end at $x_n=y$ with $n=\abs{y-x}_1=$ the number of nearest-neighbor steps from $x$ to $y$. Such paths  are defined by the requirement  $x_{k+1}-x_k\in\{e_1,e_2\}$.  (See Figure \ref{fig:cgm}.)   When the weights have a continuous distribution such as the exponential, \eqref{v:G} has  a unique maximizing path $\pi^{x,y}\in\Pi_{x,y}$ called the ({\it point-to-point} or {\it finite})  {\it geodesic}.  
	
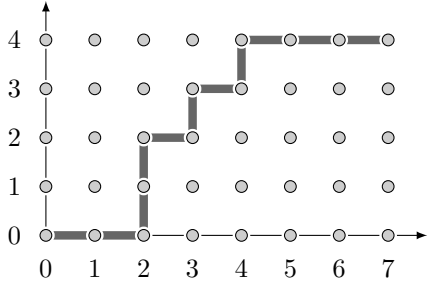
\begin{figure}[t] 
	\begin{center}
		\begin{tikzpicture}[>=latex, scale=0.65]  

			\definecolor{sussexg}{gray}{0.7}
			\definecolor{sussexp}{gray}{0.4} 

			\draw[<->](0,4.8)--(0,0)--(7.8,0);
			
			\draw[line width = 3.2 pt, color=sussexp](0,0)--(2,0)--(2,2)--(3,2)--(3,3)--(4,3)--(4,4)--(7,4);
			
			\foreach \x in { 0,...,7}{
				\foreach \y in {0,...,4}{
					\fill[color=white] (\x,\y)circle(1.8mm); 
					
					\draw[ fill=sussexg!65](\x,\y)circle(1.2mm);
				}
			}	
			
			\foreach \x in {0,...,7}{\draw(\x, -0.3)node[below]{\small{$\x$}};}
			\foreach \x in {0,...,4}{\draw(-0.3, \x)node[left]{\small{$\x$}};}	
			
		\end{tikzpicture}
	\end{center}
	\caption{ \small  An   up-right path from $(0,0)$ to $(7,4)$ on the lattice $\Z^2$. }\label{fig:cgm} 
  \end{figure}

	A {\it bi-infinite geodesic}  is a nearest-neighbor  up-right path $\{x_k\}_{k\in\Z}$  indexed by all integers with the property that for all $m<n$, the path segment $x_{[m,n]}=\{x_k\}_{k=m}^n$ is the geodesic between $x_m$ and $x_n$. 
 A straight line   $\{x_k=x+k\evec_i\}_{k\,\in\,\Z}$, for $x\in\Z^2$ and  $i\in\{1,2\}$, is trivially a bi-infinite  geodesic because there are no alternative paths between any two of its points.  Let us call a bi-infinite geodesic {\it nontrivial} if it is not of this type.  
  The main result is that the exponential CGM has no nontrivial  bi-infinite geodesics.  
 
 \begin{theorem}\label{th:main}  Assume that weights have i.i.d.\ exponential distribution. Then with probability one, there are no nontrivial  bi-infinite geodesics.  
 \end{theorem}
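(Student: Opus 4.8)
The plan is to argue by contradiction. Suppose that with positive probability there is a nontrivial bi-infinite geodesic. Since any bi-infinite geodesic passes through infinitely many lattice sites, the event that one exists is contained in $\bigcup_{v\in\Z^2}\{v$ lies on a nontrivial bi-infinite geodesic$\}$, so by translation invariance it suffices to prove that $\P(0$ lies on a nontrivial bi-infinite geodesic$)=0$. On that event fix such a geodesic $\gamma=\{x_k\}_{k\in\Z}$ with $x_0=0$; since $x_{k+1}-x_k\in\{e_1,e_2\}$, the coordinates of $x_k$ sum to $k$, so $\gamma$ meets the antidiagonal $D_\ell=\{z\in\Z^2: z_1+z_2=\ell\}$ in the single site $x_\ell$. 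The forward half $\gamma^+=\{x_k\}_{k\ge0}$ is a semi-infinite up-right geodesic, and the reversal of $\gamma^-=\{x_k\}_{k\le0}$ is a semi-infinite down-left geodesic. Because the exponential limit shape is differentiable and strictly concave, each of these has an asymptotic direction, $\xi^+$ and $\xi^-$ respectively (so $x_{-k}\approx-k\xi^-$), both lying on the simplex $\{(s,1-s):0\le s\le1\}$; let $\rho^\pm$ be the corresponding density parameters. The bi-infinite property says $\gamma$ is the point-to-point geodesic between every pair $x_{-m},x_n$; consequently $\gamma^+$ is also the semi-infinite geodesic out of each $x_n$ and $\gamma^-$ the backward semi-infinite geodesic into each $x_{-m}$. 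In the language of the increment-stationary (Busemann) theory developed earlier, $\gamma^+$ is the forward $\rho^+$-geodesic through each of its sites and $\gamma^-$ the backward $\rho^-$-geodesic through each of its sites.

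These facts combine into the inclusion that drives the proof: for every $m\ge1$ the forward $\rho^+$-geodesic started from $x_{-m}$ must cross $D_0$ exactly at the origin, where $x_{-m}$ is itself the site on $D_{-m}$ visited by the backward $\rho^-$-geodesic from $0$. Hence it is enough to show that
\[
\P\bigl(\text{the forward }\rho^+\text{-geodesic from }x_{-m}\text{ crosses }D_0\text{ at the origin}\bigr)\xrightarrow[\,m\to\infty\,]{}0 .
\]
Because $x_{-m}$ lies on a backward $\rho^-$-geodesic, it sits within $o(m)$ of $-m\xi^-$ along $D_{-m}$. Suppose first $\xi^+\ne\xi^-$; restricting to $|\xi^+-\xi^-|\ge\delta$ for a fixed rational $\delta>0$ (a countable union in $\delta$ then covers this case), a forward $\rho^+$-geodesic launched from the $o(m)$-window of $D_{-m}$ where $x_{-m}$ can lie crosses $D_0$ at Euclidean distance of order $m$ from the origin. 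Since these geodesics cross $D_0$ in monotone order as the launch site moves along $D_{-m}$, the event that one of them hits the origin is contained in the event that an extreme such geodesic deviates by order $m$ over a path of length of order $m$, whose probability tends to $0$ by the transversal-fluctuation bounds extracted earlier from the increment-stationary process; no union bound over launch sites is needed.

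The remaining, and genuinely hard, regime is $\xi^+=\xi^-=\xi(\rho)$, with $\rho$ an interior density (the two extreme coincident directions need a separate direct argument, the stationary process being degenerate there). Then $\gamma$ is asymptotically the straight line of direction $\xi$ through the origin, and $\gamma^\pm$ are the forward and backward $\rho$-geodesics from the \emph{same} site $0$; now the crossing site of $D_0$ by the forward $\rho$-geodesic from $x_{-m}$ does fall within $O(m^{2/3})$ of the origin, and the divergent-direction estimate no longer applies. Here one needs the sharper statement that this crossing site is genuinely spread on the transversal scale $m^{2/3}$ and puts no atom on any prescribed site — even conditionally on the backward $\rho$-geodesic that produced $x_{-m}$. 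This is where the exact independence structure of the increment-stationary exponential process, the linear order of its passage-time variance, and planarity (to control how the $\rho$-geodesics fan out) are combined. The randomness of the density is absorbed by coarse-graining $\rho$ into finitely many values and sandwiching the relevant Busemann geodesics between densities $\rho-\e$ and $\rho+\e$, using their monotonicity in the density, with the estimate made uniform enough in $\rho$ for this to go through. Given the displayed limit, letting $m\to\infty$ contradicts the existence of $\gamma$.

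I expect this coincident-direction regime to be the main obstacle. Splitting $\gamma$, reading off the asymptotic directions, and the divergent-direction estimate are comparatively soft; the real content is proving that the antidiagonal crossing site of the stationary $\rho$-geodesic is spread on scale $m^{2/3}$ with no deterministic atom, and doing so uniformly enough to survive both the conditioning on the backward geodesic and the coarse-graining over the random density. That is exactly the place where the independence of the increment-stationary process, its known passage-time variance, and planarity of geodesics must be used in combination, and it is the reason the argument can replace, rather than re-use, integrable-probability input.
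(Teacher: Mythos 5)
Your decomposition — split the bi-infinite geodesic into forward and backward halves, read off asymptotic directions $\xi^\pm$, then case on whether $\xi^+=\xi^-$ — is genuinely different from the paper's, and it carries a cost: it relies on the existence and uniqueness of asymptotic directions for semi-infinite geodesics and on the identification of $\gamma^+$ with the directional (Busemann) geodesic from each of its sites, none of which you establish and none of which the paper uses. The paper deliberately avoids that layer of theory: it never assigns a direction to the hypothetical geodesic, but instead coarse-grains the boundary of the box $\lzb-N,N\rzb^2$, bounds $\P(W_{N,\e})\le C(\e)N^{-1/24}$ for the event that \emph{some} finite geodesic from $\partial^{N,\e}$ to $\rim\partial^{N,\e}$ passes through the origin, and handles the near-axis regime separately with the limit shape and planarity (Theorem~\ref{th:no-axis}). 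Because $W_{N,\e}$ is defined over all boundary pairs at once, the random direction of the geodesic never has to be pinned down.

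The real problem is that your coincident-direction regime — which you correctly flag as ``the main obstacle'' and ``the real content'' — is left unresolved, and this is exactly the case the paper's Lemma~\ref{lm:clo} exists to handle. You formulate what is needed as a ``no-atom'' statement: that the $D_0$-crossing site of the forward $\rho$-geodesic from $x_{-m}$ is spread on scale $m^{2/3}$, uniformly in $\rho$ and conditionally on the backward $\rho$-geodesic that produced $x_{-m}$. That conditioning is precisely what makes a direct version of this hard: the backward geodesic from $0$ and the forward geodesic from $x_{-m}$ share the entire weight field in the rectangle between them, so the ``conditional no-atom'' statement is not a clean independence claim. The paper sidesteps this by not conditioning at all: it couples the bulk LPP with increment-stationary processes based at a corner $o_c$ from parameters $\rhodown\rho$, $\rhoup\rho$ on the southwest side and a matching pair on the northeast side, observes that on the high-probability events $A_{o,d}\cap B_{\rim o,d}$ (Lemmas \ref{lem ge1}--\ref{lem ge2}) the edge through which the geodesic crosses the $y$-axis is controlled by two \emph{independent} two-sided random walks $S$ and $S'$ (Lemma~\ref{lm:sw}), and then bounds the probability that both walks are negative on their half-lines by $O(N^{-2/5})$ (Lemma~\ref{lm:clo}). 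The independence it uses is the joint independence in Theorem~\ref{th:st-lpp}(i) — not a posterior claim about a conditioned crossing distribution. So while your intuition about ``what must be combined'' (stationary independence, passage-time variance, planarity) is exactly right, the proposal does not supply the mechanism, and the mechanism you gesture at (a conditional no-atom statement) is not the one that works cleanly. The two-sided divergent-direction case and the axis case are also only sketched, but those are softer; the coincident-direction gap is the one that would block you.
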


 \subsection{Related work} 
 Among past work on geodesics,  
our proof is in spirit aligned with the Damron-Hanson work on FPP  \cite{damr-hans-14, damr-hans-17} and with the general LPP work in  \cite{geor-rass-sepp-17-geod, janj-rass-sepp-arxiv},   in the sense that the stationary version of the process lies at the heart of the matter.   However,  statistical properties  of the stationary versions of FPP and  of  LPP with general  weights are completely unknown.  Consequently    a straightforward adaptation of our proof to those settings is not immediately available.    

  Compared to earlier work on the exponential CGM that utilized couplings with the stationary version, such as  \cite{bala-cato-sepp, pime-16, sepp-cgm-18}, two specific   new developments made this paper  possible:
\begin{enumerate} [{(i)}]  \itemsep=3pt
\item The  discovery  in  \cite{fan-sepp-arxiv}  of the stationary distribution of the  joint LPP process with multiple characteristic directions.   A bivariate version of this distribution is constructed in Theorem \ref{th:st-lpp} below. 
\item A novel argument for controlling the location of the geodesic by coupling the bulk process with two distinct stationary processes from two different directions (Lemma \ref{lm:clo} below).  
\end{enumerate} 

One can  be fairly confident  that these features extend  to both zero-temperature and positive-temperature  polymer  models  in 1+1 dimensions that possess a tractable  stationary version.  This includes various last-passage models in both discrete and continuous space, such as those studied  in \cite{aldo-diac95, grav-trac-wido-01, joha01, oconn-yor-01, sepp97incr, sepp98perc},  and the four currently known solvable polymer models \cite{chau-noac-18-ejp}.      In positive-temperature polymer models the analogous question concerns the existence of bi-infinite Gibbs measures, as discussed in  \cite{janj-rass-19-aop-}.    These matters are left for future work. 

As in   \cite{basu-hoff-sly-arxiv-18} by Basu, Hoffman and Sly, our proof comes in two parts: 
\begin{enumerate} [{(a)}]   \itemsep=2pt 
\item The  main argument rules out bi-infinite geodesics with finite positive slope. 
\item  An easier argument shows that no geodesic can come   infinitely often arbitrarily close to an axis in the macroscopic scale.  \\[-18pt]   
\end{enumerate} 
\vspace{3pt} 
\noindent 
Beyond this superficial similarity, the   two proofs are quite different in both parts (a) and (b).  

\vspace{2pt} 

 Our  part (a)  in Section \ref{s:body}   is a  straightforward estimation of the probability that a geodesic  through the origin connects the boundaries of a square at scale $N$.  By contrast,   \cite{basu-hoff-sly-arxiv-18} controls complicated events that involve coalescence of geodesics.  This  yields additional results of interest, but the  simplicity of the bi-infinite geodesic problem is obscured.   Their sharper tools give a better estimate of the probability of a connection through the origin, namely $O(N^{-1/3})$,  while our  cruder  bound is $O(N^{-1/24})$.    In Remark \ref{rm:loss} we indicate the precise place where our estimates grow beyond  optimal order of magnitude.  

 Part (b) in  \cite{basu-hoff-sly-arxiv-18} utilizes fluctuations.  Our part (b) in Section  \ref{s:no-axis} uses the limit shape and  planarity. 
 
 \medskip 
  
 We conclude this introduction by observing that  
the non-existence of bi-infinite geodesics will  be a tool for further results. To cite an example, article \cite{janj-rass-sepp-arxiv} studies a random graph in the CGM that represents an analogue of shocks in Hamilton-Jacobi equations.  Theorem 4.3 in \cite{janj-rass-sepp-arxiv} shows that the absence of bi-infinite geodesics implies certain coalescence properties of this ``shock graph''.

 \medskip 
 
 Section \ref{s:outline} outlines the proof of Theorem \ref{th:main} and describes the organization of the rest of the paper.   We provide a self-contained exposition of the entire proof, including  proof sketches of  many auxiliary results that we use.  We collect below some notation for easy reference. 

 \subsection{Notation and conventions.} 
 	$\Z_{\ge0}=\{0,1,2,3, \dotsc\}$ and $\Z_{>0}=\{1,2,3,\dotsc\}$. 
	   For real numbers  $a$ and $b$,  $a\vee b=\max\{a,b\}$   and   $\lzb a,b  \rzb=[a,b]\cap \mathbb{Z}$.   $0$ denotes the origin of both $\R$ and $\R^2$.    $C(\e)$ and $N_0(\e)$ are  constants that  depend on a parameter $\e$ but their values   can change from line to line.  
 	
 	For $x=(x_1,x_2), y=(y_1,y_2)\in\R^2$ we use the following conventions.  
 	The standard basis vectors   are  $e_1=(1,0)$ and $e_2=(0,1)$.  The $\ell^1$-norm  is   $\abs{x}_1=\abs{x_1} + \abs{x_2}$.    
 	Integer parts  and inequalities  are interpreted coordinatewise:    $\fl{x}=(\fl{x_1}, \fl{x_2})$ and  $x\le y$  means $x_1\le y_1$ and $x_2\le y_2$.     Notation $[x,y]$ represents both   the line segment $[x,y]=\{tx+(1-t)y: 0\le t\le 1\}$   and the rectangle  $[x,y]=\{(z_1,z_2)\in\R^2:   x_i\le z_i\le  y_i \text{ for }i=1,2\}$. The context makes clear which one is used.  An open line segment is  $]x,y[\,=\{tx+(1-t)y: 0<t< 1\}$.  The lattice rectangle and  line segment are denoted  by $\lzb x,y \rzb =[x,y]\cap \mathbb{Z}^2$.   
	Path segments are  abbreviated  by $\pi_{[m,n]}  = (\pi_i)_{i=m}^n$.
 	
	 $\overline X=X-EX$ denotes a random variable $X$ centered at its mean. 
 	$X\sim$ Exp($\lambda$) for $0<\lambda<\infty$ means that the random variable $X$ has exponential distribution with rate $\lambda$, in other words $P(X>t)=e^{-\lambda t}$ for $t\ge 0$. 


 \section{Outline of the proof}\label{s:outline} 
 
 We state two auxiliary theorems and use them to prove Theorem \ref{th:main}.  Then we sketch the main ideas behind the auxiliary theorems and explain the organization of the     rest of the paper. 
 
 By the shift-invariance of the underlying weight distribution, it suffices to prove that with probability one, no nontrivial bi-infinite geodesic goes through the origin. 
 This task is split into two cases: either the geodesic ultimately stays away from the axes on a macroscopic scale, or it comes infinitely often macroscopically close to some axis.  
 
 For the first case,  
 for large positive integers $N$ and small $\e>0$, we  rule out geodesics that connect the southwest boundary of the lattice square $\lzb-N,N\rzb^2$ to its northeast boundary through the origin and whose empirical average  slope is in the range $[\e, \e^{-1}]$.   Define these portions of the boundary of the square: in the southwest
 \be\label{sw}  \partial^{N\!,\,\e}= \bigl(\, \{-N\}\times \lzb -N,-\e N \rzb\,\bigr) \cup \bigl(\, \lzb -N,-\e N\rzb \times \{-N\} \,\bigr) 
 \ee
 and in the northeast 
  \be\label{ne}  \rim{\partial}^{N\!,\,\e}=  \bigl(\,\{N\}\times \lzb \e N, N \rzb\,\bigr) \cup \bigl(\,\lzb \e N,N\rzb \times \{N\}\,\bigr) .  
 \ee

 Define the following event, illustrated in Figure \ref{fig:geod2}: 
\be\label{WNe}\begin{aligned}   W_{N\!,\,\e}
=\bigl\{ &\text{$\exists$ points $u\in\partial^{\,N\!,\,\e}$ and   $v\in\rim{\partial}^{\,N\!,\,\e}$ such that  } \\
&\qquad 
\text{the geodesic $\pi^{u,v}$ goes through the origin}  \bigr\} .  
\end{aligned}\ee  

\begin{figure}
\includegraphics[width=8cm]{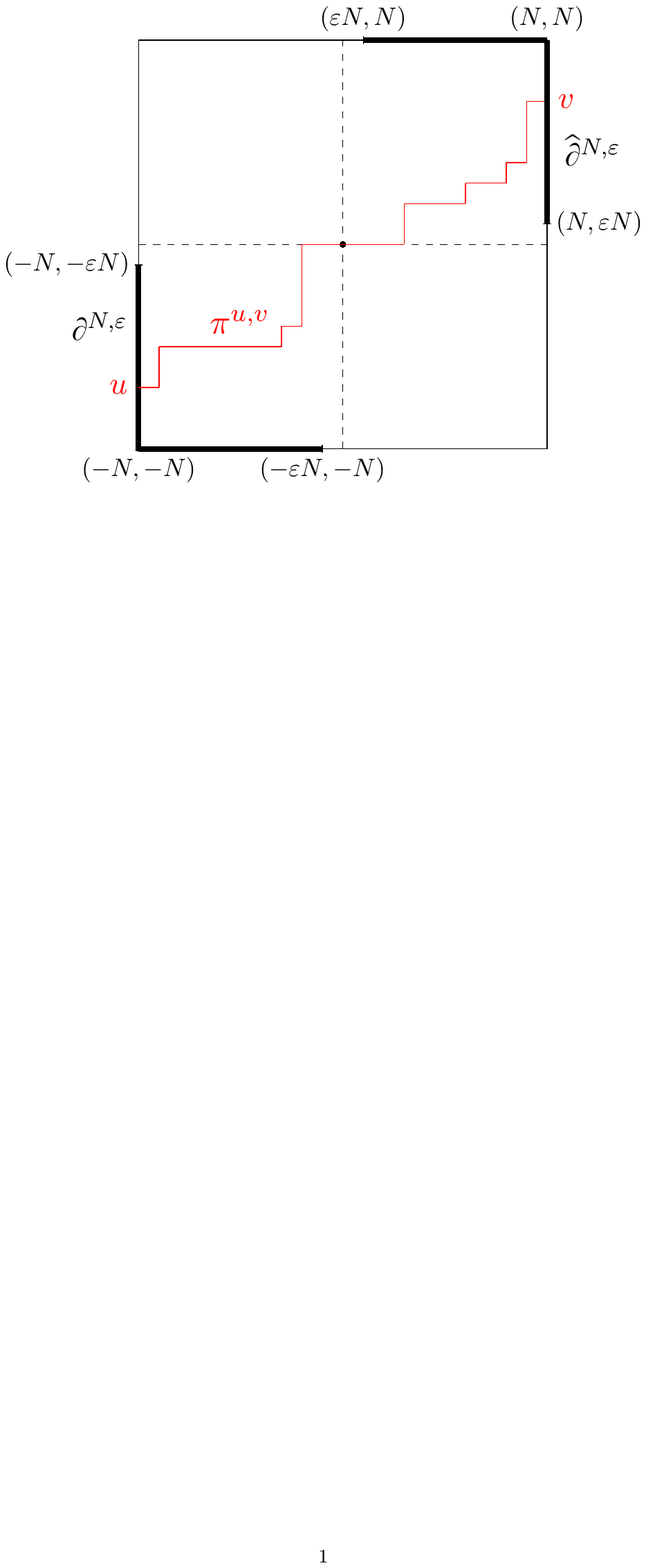}
\caption{\small The event $W_{N\!,\,\e}$. The thickset portions of the boundary  are $\partial^{N,\e}$ and $\rim{\partial}^{N,\e}$.  They are connected by the geodesic $\pi^{u,v}$   through the origin.}
\label{fig:geod2} 
\end{figure} 

We have the following quantitative control of this event. 

\begin{theorem}\label{th:ub77} 	
	For each $\e>0$ there exists a constant $C(\e)>0$ such that  $\P(W_{N\!,\,\e}) \le C(\e)N^{-\frac{1}{24}}$ for all $N\ge 1$. 
	\end{theorem}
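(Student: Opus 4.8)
The plan is to bound $\P(W_{N,\e})$ by comparing the last-passage values $G_{u,v}$ along a putative geodesic through the origin with last-passage values computed from the increment-stationary versions of the CGM coming from two well-chosen characteristic directions. The key point is that if a geodesic $\pi^{u,v}$ with $u\in\partial^{N,\e}$, $v\in\rim\partial^{N,\e}$ passes through $0$, then $G_{u,v}=G_{u,0}+G_{0,v}-\w_0$, and since $\pi^{u,v}$ is a \emph{geodesic}, this decomposition is the maximum over all $u,v$ on the two boundary pieces. So on $W_{N,\e}$ there exist boundary points with $G_{u,0}+G_{0,v}-\w_0 \ge \max_{u'}G_{u',0'}$-type bounds; more precisely, the existence of such a geodesic forces a matching between ``backward'' passage times from $0$ to $\partial^{N,\e}$ and ``forward'' passage times from $0$ to $\rim\partial^{N,\e}$ that is $\e$-away from the diagonal direction on both sides. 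I would first reduce $W_{N,\e}$ to a union of $O(1/\e)$ events, one for each pair of slopes in a discretization of $[\e,\e^{-1}]$, so that it suffices to estimate, for a fixed pair of directions bounded away from the axes, the probability that the geodesic from a point on one side to a point on the other goes through the origin.

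Next, for a fixed macroscopic direction $\xi$ with slope in $[\e,\e^{-1}]$, I would couple the i.i.d.\ weight configuration in (say) the northeast quadrant with the stationary LPP process in direction $\xi$, whose increments along the axes are independent exponentials with parameters determined by $\xi$ (as recalled/constructed in the cited stationarity results, in particular Theorem~\ref{th:st-lpp}). The competition-interface/geodesic in the stationary process has a law of large numbers with asymptotic direction $\xi$ and transversal fluctuations of order $N^{2/3}$, and the difference between stationary and bulk passage times is controlled by the stationary increment variables, which concentrate. The crucial quantitative input is then a bound of the form: the probability that the stationary geodesic from a boundary point $\lfloor N\xi'\rfloor$ (with $\xi'$ the ``reflected'' direction on the other side) to the origin is ``too straight'' — i.e.\ is pinned to pass through $0$ while also emanating at a forced slope on the other side — is small, polynomially in $N$. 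This is where coupling with \emph{two} stationary processes from \emph{two} directions enters (following Lemma~\ref{lm:clo}): one direction controls the geodesic from the southwest, the other from the northeast, and planarity (monotonicity of geodesics, the fact that geodesics from ordered endpoints cross at most once) lets me sandwich the bulk geodesic between two stationary geodesics whose endpoints I can move.

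The heart of the estimate is a variance/deviation bound. Using the stationary process, the passage time $G_{u,0}$ for $u$ on the southwest boundary at the characteristic direction has fluctuations of order $N^{1/3}$, and the same for $G_{0,v}$. The event that the \emph{sum} $G_{u,0}+G_{0,v}$ achieves (within $\w_0$) the value of the true geodesic $G_{u,v}$ across the whole square is, after recentering by the deterministic shape function, the event that two nearly independent $N^{1/3}$-fluctuating quantities nearly saturate a constraint; by a Paley–Zygmund / anti-concentration argument applied to the stationary increments — or by comparing $\P(G_{u,0}+G_{0,v}-\w_0 = G_{u,v})$ against $\P$(geodesic passes through a generic lattice point near $0$) and summing — one gets that the probability the geodesic passes through any \emph{specific} point is $O(N^{-1/3})$, hence $O(N^{-1/24})$ after the crude losses incurred by (i) replacing the true i.i.d.\ bound by the stationary bound, (ii) the union over the $O(\e^{-1})$ discretized directions absorbed into $C(\e)$, and (iii) converting an $L^2$ estimate for a difference of passage times into a probability bound via Chebyshev, which costs a square root and then further powers from nested dyadic scales. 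I would carry this out by: (1) discretizing directions; (2) for each direction, setting up the two-sided stationary coupling and invoking the increment independence; (3) writing the ``geodesic through $0$'' event as a coincidence of stationary passage times across $0$; (4) bounding its probability by $C N^{-1/24}$ via the variance estimates for stationary passage times together with an anti-concentration input; (5) summing over directions into $C(\e)$.

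The main obstacle I expect is step (4): squeezing a genuine polynomial decay rate out of the stationary-process estimates without importing the sharp integrable-probability fluctuation bounds. The difficulty is that $G_{u,0}+G_{0,v}-\w_0 = G_{u,v}$ is an equality event, so one needs not just an upper bound on fluctuations but an \emph{anti-concentration} statement — the difference $G_{u,v}-(G_{u,0}+G_{0,v}-\w_0)$, which is a nonnegative quantity, must be shown to be bounded away from $0$ with probability close to $1$ at a quantifiable rate. Handling this with only coupling and coarse-graining — presumably by comparing the passage time of the true geodesic to that of a perturbed path that avoids $0$, and showing the gain has positive mean of a controlled order — while keeping track of how the exponent degrades from $1/3$ to $1/24$ through the necessary dyadic/coarse-graining steps, is the delicate part. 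Everything else (the LLN for stationary geodesics, the transversal-fluctuation control, the planarity sandwiching, the reduction to finitely many directions) is comparatively routine given the stationarity results quoted earlier, in particular Theorem~\ref{th:st-lpp} and Lemma~\ref{lm:clo}.
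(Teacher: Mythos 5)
Your proposal shares the high-level philosophy of the paper's proof --- couple the bulk process with increment-stationary LPP from two characteristic directions and use the resulting sandwich --- but the key mechanism that makes the estimate quantitative is missing, and your proposed substitute would not produce a polynomial rate. The first problem is the discretization: you propose a union over $O(\e^{-1})$ macroscopic direction blocks, but on each such block the southwest boundary has $\Theta(N)$ starting points and $\Theta(N^{1/3})$ essentially independent geodesic ``bundles'' of width $N^{2/3}$, so the probability that \emph{some} geodesic from that block passes through the origin is nowhere near $O(N^{-1/24})$ --- the useful discretization is into $O(N^{1/3})$ blocks of size $N^{2/3}$ on $\partial^{N,\e}$ (that is what Section~\ref{s:body} does), and the final exponent $-1/24$ is precisely what survives after a union bound over those $N^{1/3}$ blocks with a per-block bound of $N^{-3/8}$. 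The second problem is your step~(4): you correctly identify that ``geodesic passes through $0$'' is an equality event, hence requires an anti-concentration estimate, but ``compare the true geodesic to a perturbed path avoiding $0$ and show the gain has positive mean'' is not developed into something that can be run, and the raw passage-time difference $G_{u,v}-(G_{u,0}+G_{0,v}-\w_0)$ has no tractable structure. The paper's resolution is a genuine reformulation: the geodesic $\pi^{u,v}$ uses edge $((0,0),(1,0))$ if and only if a certain two-sided random walk $S^{u,v}_n$ built from the $J$-increments of $G_{u,\abullet}$ and $\rim G_{v,\abullet}$ along the $y$-axis attains its maximum at $n=0$ (display \eqref{S-pi}); the stationary couplings in Lemmas~\ref{lem ge1}--\ref{lem ge2} sandwich $S^{u,v}$ between two walks $S$ and $S'$ \emph{whose positive and negative halves are mutually independent} by Theorem~\ref{th:st-lpp}(i), and then the probability factorizes into two one-sided persistence probabilities handled by Lemma~\ref{l:rw1}. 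Without this random-walk picture --- or some comparable device --- you cannot turn the fluctuation estimates into a probability bound for an equality event.

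You are also missing the two-regime split (near block versus far block). Lemma~\ref{lm:clo} only treats $v$ in the opposite block $\rim\cI_{\rim o,d}$ of size $N^{19/24}$; when $v$ is outside that block, the relevant bound is Lemma~\ref{lm:far}, which is a completely different argument: it shows via planarity and Lemma~\ref{lm:geodev} that such a geodesic would violate the $N^{2/3}$ transversal wandering exponent, giving $O(d_2^{-3})=O(N^{-3/8})$. Finally, your accounting of the exponent is backwards: $N^{-1/24}$ is not a ``crude loss'' from a single $N^{-1/3}$ estimate; it is $N^{1/3}\cdot N^{-3/8}$ from the union over blocks, and the degradation from the ideal $N^{-2/3}$ per block to $N^{-2/5}$ or $N^{-3/8}$ comes, as explained in Remark~\ref{rm:loss}, from the need to take $r$ and $d_2$ growing with $N$ so that $\P(A_{o,d}^c\cup B_{\rim o,d}^c)$ is controlled, not from a Chebyshev-versus-$L^2$ step.
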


Theorem \ref{th:ub77} rules out all geodesics that stay macroscopically away from the axes.   The next theorem shows that there are no nontrivial geodesics   that  come macroscopically arbitrarily close to an axis.

\begin{theorem} \label{th:no-axis}   The following statement  holds with probability one. 
For $i\in\{1,2\}$  and each $x\in\Z_{\ge0}^2$,  $\{x_k=x+k\evec_i\}_{k\,\in\,\Z_{\ge0}}$ is the only semi-infinite geodesic that satisfies $x_0=x$ and $\ddd\varliminf_{k\to\infty} k^{-1} x_k\cdot\evec_{3-i}=0$. 
\end{theorem}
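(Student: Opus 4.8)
My plan is to reduce the statement, via shift-invariance and the peeling of a finite prefix, to a claim about a single semi-infinite geodesic rooted at the origin; to trap that geodesic on one side of the competition interface at the origin by planarity; and then to pin down the law of the competition-interface direction using an increment-stationary last-passage process. For the reduction: by shift-invariance of the weight law and the reflection $(a,b)\mapsto(b,a)$ exchanging $\evec_1,\evec_2$, it suffices to treat $i=1$ and to prove that a.s.\ every semi-infinite geodesic $\gamma=\{x_k\}_{k\ge0}$ with $x_0=0$ and first step $\evec_2$ satisfies $\varliminf_{k\to\infty}k^{-1}x_k\cdot\evec_2>0$. Indeed, if $\gamma$ is a semi-infinite geodesic with $x_0=x$, $\gamma\ne\{x+k\evec_1\}_{k\ge0}$, and $\varliminf_k k^{-1}x_k\cdot\evec_2=0$, then $\gamma$ has a first $\evec_2$-step from some $x_j$, its tail $\{x_{j+m}\}_{m\ge0}$ is a semi-infinite geodesic rooted at $x_j$ with first step $\evec_2$, and deleting the finite prefix does not change the $\varliminf$, so the displayed claim applied at $x_j$ forces $\varliminf_k k^{-1}x_k\cdot\evec_2>0$, a contradiction.

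To prove the displayed claim I would use the competition interface $\varphi$ rooted at $0$: the infinite up-right lattice path separating $\cR_1=\{v>0:\ \pi^{0,v}\text{ has first step }\evec_1\}$ (below $\varphi$) from $\cR_2=\{v>0:\ \pi^{0,v}\text{ has first step }\evec_2\}$ (above $\varphi$). That $\varphi$ is well defined and monotone is planarity: two finite geodesics from $0$ with first steps $\evec_1$ and $\evec_2$ cannot cross, so $\cR_2$ is closed under $v\mapsto v+\evec_2$ and $v\mapsto v-\evec_1$ (and $\cR_1$ dually), while both regions are infinite, containing the positive $\evec_1$- and $\evec_2$-half-axes respectively. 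By uniqueness of point-to-point geodesics, a semi-infinite geodesic $\gamma$ with $x_0=0$ and first step $\evec_2$ has $\pi^{0,x_k}=x_{[0,k]}$ of first step $\evec_2$ for all $k\ge1$, so every $x_k\in\cR_2$ lies weakly above $\varphi$. Thus the claim reduces to showing that $\varphi$ has an asymptotic direction $\Theta$ a.s.\ with $\mathrm{slope}(\Theta)\in(0,\infty)$, equivalently $\varliminf_n n^{-1}\varphi_n\cdot\evec_j>0$ for $j=1,2$. (If $\varphi_n\cdot\evec_2\ge\lambda\,\varphi_n\cdot\evec_1$ eventually with $\lambda>0$, the same holds along $\gamma$ by monotonicity of $\varphi$, so $k=x_k\cdot\evec_1+x_k\cdot\evec_2\le(1+\lambda^{-1})\,x_k\cdot\evec_2$ and $\varliminf_k k^{-1}x_k\cdot\evec_2\ge\lambda/(1+\lambda)>0$; the vertical ray has $\varliminf=1$.)

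To identify the direction of $\varphi$ I would compare the i.i.d.\ geodesic tree at $0$ with increment-stationary last-passage processes (cf.\ Theorem~\ref{th:st-lpp}). For $\rho\in(0,1)$ the stationary process $G^\rho$ with $\mathrm{Exp}(1-\rho)$ south-boundary increments, $\mathrm{Exp}(\rho)$ west-boundary increments, and $\mathrm{Exp}(1)$ bulk weights has characteristic direction $\xi(\rho)$ of slope $(\rho/(1-\rho))^2$, which runs over $(0,\infty)$ as $\rho$ varies over $(0,1)$. Coupling $G^\rho$ to the i.i.d.\ bulk field through the shared bulk weights, using the monotonicity of $(G^\rho)_{\rho}$ to squeeze the i.i.d.\ geodesics between stationary ones, and invoking a Burke-type reversibility identity — the one that makes $G^\rho$ stationary — to evaluate the boundary-exit probabilities, one obtains that $\mathrm{slope}(\Theta)$ has a continuous law on $(0,\infty)$ with $\P(\mathrm{slope}(\Theta)\le\mathrm{slope}(\xi(\rho)))=\rho$. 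In particular $\mathrm{slope}(\Theta)\in(0,\infty)$ a.s., so $\varliminf_n n^{-1}\varphi_n\cdot\evec_j>0$ for $j=1,2$; this completes the proof for $i=1$, and exchanging the axes settles $i=2$.

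I expect the last step to be the main obstacle. The assertion hiding inside it — that the geodesic to a far, nearly-axial point leaves the corner sideways with vanishing probability — cannot be obtained from the limit shape together with one-point passage-time deviation estimates: for $v$ far out in a direction of slope $\delta$, the mean gap $\E[\Gpp_{\evec_1,v}]-\E[\Gpp_{\evec_2,v}]$ that decides the first step is only of order $\delta^{-1/2}$, which is dominated by the $|v|^{1/3}$-scale fluctuations of $\Gpp_{\evec_1,v}$ and $\Gpp_{\evec_2,v}$ separately; one must instead use the exact independence of the stationary increments (the Burke property) to control the difference $\Gpp_{\evec_1,v}-\Gpp_{\evec_2,v}$, and it is this input that forces the competition-interface direction off the coordinate axes at a genuine rate.
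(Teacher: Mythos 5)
Your route via the competition interface $\varphi$ is genuinely different in form from the paper's, and the planarity part is sound: reduce to a geodesic from $0$ with first step $e_2$, trap it in $\cR_2$ weakly above $\varphi$, then pass a positive-slope cone bound on $\varphi$ to the geodesic. (You only need the one-sided $\varliminf_n n^{-1}\varphi_n\cdot e_2>0$, not the existence of a limiting direction $\Theta$; the two are not equivalent as you write them.) The gap is the last and only hard step. You assert that the coupling of $G^\rho$ with the bulk field, monotonicity in $\rho$, and a Burke identity ``evaluate the boundary-exit probabilities'' to yield $\P\bigl(\mathrm{slope}(\Theta)\le\mathrm{slope}(\xi(\rho))\bigr)=\rho$. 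That formula is indeed the Ferrari--Pimentel competition-interface direction law and can be derived from Busemann increments via $\P\bigl(\mathrm{Exp}(\rho)<\mathrm{Exp}(1-\rho)\bigr)=\rho$, but your sketch neither proves it nor, as stated, targets the quantity that decides $\varphi$'s location. The monotone coupling \eqref{wIJ} sandwiches $\pi^{0,p}$ between two stationary geodesics in \emph{transversal location} --- that is what Lemma~\ref{lm:geodev} actually establishes --- but all three paths emanate from $0$, so the sandwich constrains trajectories without pinning down the \emph{first step} of $\pi^{0,p}$, and the first step is what places $p$ in $\cR_1$ versus $\cR_2$. To decide the first step as $v\to\infty$ one must control the sign of $G_{e_1,v}-G_{e_2,v}$, i.e.\ a Busemann function, whose existence and law this paper deliberately develops nowhere. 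In effect you have reduced the theorem to an essentially equivalent statement about $\varphi$ and then appealed to it.

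The paper's proof never mentions $\varphi$ or Busemann functions. Along a semi-infinite geodesic $\pi$ from $0$ whose first $e_2$-step is at index $\ell$, a one-line path splice gives the uniform bound $G_{0,\pi_n}-G_{e_2,\pi_n}\le G_{0,\pi_\ell}-\sum_{i<\ell}\w_{(i,1)}$ for all $n>\ell$; the crossing lemma makes $v\mapsto G_{0,v}-G_{e_2,v}$ monotone in the right directions; and Lemma~\ref{u:lm7} shows this quantity blows up along near-axial sequences $w(n,k)$, using only the stationary shape limits \eqref{u:104}--\eqref{u:107}, the strict inequality \eqref{u:117}, and the $\mathrm{Exp}(\lambda_k)$ Burke distribution of the single increment $\wh J^{\lambda_k}_{w(n,k),0}$. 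Your closing paragraph correctly diagnoses that the limit shape alone cannot decide the first step and that Burke's property is indispensable; the paper does use Burke, but on one stationary increment rather than on a direction law, and that is exactly what keeps the argument self-contained.
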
 

We combine the two theorems above  to rule out all nontrivial bi-infinite geodesics.  

\begin{proof}[Proof of Theorem \ref{th:main}, assuming Theorems \ref{th:ub77} and \ref{th:no-axis}]  
Fix a positive sequence  $\e_j\searrow0$. 
Define the event
\[  A=  \bigcap_{j\ge 1} \bigcap_{M\ge 1}  \bigcup_{N\ge M} W_{N\!,\,\e_j}^c.    \] 
Theorem \ref{th:ub77} implies that $\P(A)=1$: 
 \begin{align*}
\P(A^c)=& \P\biggl( \;\bigcup_j \bigcup_M \bigcap_{N\ge M} W_{N\!,\,\e_j}\biggr) 
 \le \sum_j  \P\biggl( \; \bigcup_M \bigcap_{N\ge M} W_{N\!,\,\e_j}\biggr)
 =  \sum_j  \lim_{M\to\infty} \P\biggl( \;   \bigcap_{N\ge M} W_{N\!,\,\e_j}\biggr)\\[4pt] 
 &\qquad\qquad
 \le   \sum_j  \lim_{M\to\infty} \P(  W_{M, \e_j})  
 \le    \sum_j  \lim_{M\to\infty} C(\e_j)  M^{-\frac{1}{24}} =0. 
 \end{align*} 

For $i\in\{1,2\}$, let  $B_i$ the event that there are no semi-infinite geodesics $\{x_k\}_{k\ge0}$ such that $x_0=0$ and $\varliminf_{k\to\infty} k^{-1} x_k\cdot\evec_{i}=0$ except for the trivial one  $\{x_k=k\evec_{3-i}\}_{k\,\in\,\Z_{\ge0}}$.   Let $R$ reflect the weight configuration across the origin:  $(R\w)_x=\w_{-x}$ for $x\in\Z^2$.  
Define the event 
\[ B=B_1\cap B_2\cap R^{-1}B_1\cap R^{-1}B_2. \] 
On the event $B$ every  semi-infinite geodesic  that either starts or ends at the origin satisfies the condition  that far enough from the origin it lies entirely inside a closed cone with apex at the origin and  disjoint from the coordinate axes.  Theorem \ref{th:no-axis} and the reflection invariance of the distribution of the weights $\w$ imply that $\P(B)=1$. 

We claim that on the full-probability event $A\cap B$ there are no nontrivial  bi-infinite geodesics   through the origin.   
 To show this, suppose  there exists a nontrivial  bi-infinite geodesic $\pi$ through the origin in the weight configuration  $\w$. 
  Consider  the following dichotomy:
\begin{enumerate}    [{(i)}]    \itemsep=4pt
\item  $\exists j,M\in\Z_{>0}$ such that   $\pi$ connects $\partial^{N\!,\,\e_j} $ to $\rim{\partial}^{N\!,\,\e_j} $ for all $N\ge M$, or  
\item   $\forall j,M\in\Z_{>0}$,   $\exists N\ge M$ such that $\pi$ misses either $\partial^{N\!,\,\e_j} $ or  $\rim{\partial}^{N\!,\,\e_j} $. \\[-12pt] 
 \end{enumerate} 
 
 Alternative (i)   forces $\w\in A^c$.   
  In alternative (ii), if  $\pi$ misses $\rim{\partial}^{N\!,\,\e_j} $ infinitely often   for each $\e_j$,  it follows that $\varliminf_{k\to\infty} k^{-1}\pi_k\cdot e_i=0$ for either $i=1$ or $2$.   Thus $\w\in B_1^c\cup B_2^c$.  Similarly,  missing $\partial^{N\!,\,\e_j} $  infinitely often   for each $\e_j$  implies  $R\w\in B_1^c\cup B_2^c$.  

 Thus a nontrivial bi-infinite geodesic through the origin is possible only on the zero-probability event $A^c\cup B^c$. 
 \end{proof} 
 
 \begin{proof}[Sketch of the proof of Theorem \ref{th:ub77}]   Theorem \ref{th:ub77} comes from two distinct  stages. 
 
 (i)  In the first stage, the southwest boundary $\partial^{N,\e}$ is divided into blocks of size $N^{2/3}$ and  the northeast boundary $\rim{\partial}^{N,\e}$ into blocks of size $N^{19/24}$.  The probability that a geodesic connects two diagonally opposite blocks through the origin  is bounded by $N^{-2/5}$ (Lemma \ref{lm:clo}).  The control here comes from  random walk bounds on the location where a geodesic crosses the $y$-axis.  These  bounds are developed through a coupling with   increment-stationary LPP processes. 
 
 (ii) The second stage shows that any geodesic that connects an $N^{2/3}$-block through the origin to a point {\it outside}  its opposite $N^{19/24}$-block violates the  $N^{2/3}$ KPZ wandering exponent.  Through another coupling argument, the probability of this happening is  bounded  by $N^{-3/8}$ (Lemma \ref{lm:far}).   
 
Multiplying by the number of $N^{2/3}$-blocks gives  the estimate $O(N^{1/3}\cdot N^{-2/5}+N^{1/3}\cdot N^{-3/8})= O(N^{-1/24})$. 
 \end{proof}
 
 \begin{proof}[Sketch of the proof of Theorem \ref{th:no-axis}]    
 Comparison with  increment-stationary LPP processes shows that the quantity $G_{0,\pi_n}-G_{e_2,\pi_n}$ blows up if $\pi_n$ is a path above the $x$-axis but  $n^{-1}\pi_n$ comes arbitrarily close to the $x$-axis.   This rules out the possibility that $\pi_\bbullet$ is a geodesic. 
 \end{proof}

The next two sections develop tools: Section \ref{s:statLPP} a coupling of increment-stationary LPP processes and Section \ref{s:bound} bounds on geodesic fluctuations. The  proof of Theorem \ref{th:ub77}  follows   in Section \ref{s:body} and that   of  Theorem  \ref{th:no-axis}   in Section \ref{s:no-axis}.  

\section{Stationary last-passage percolation} 	\label{s:statLPP}

Pick $0<\lambda<\rho<1$ and a base vertex $u\in\Z^2$.  We construct   two coupled LPP processes  $G^\lambda_{u,\bbullet}$ and $G^\rho_{u,\bbullet}$   on the nonnegative  quadrant $u+\Z_{\ge0}^2$  such that their increments are jointly stationary under lattice translations.  
Both processes use the same   i.i.d.\ Exp(1) weights $\{\w_x\}_{x\,\in\,u+\Z_{>0}^2}$ in the bulk.  They have  boundary conditions on the positive  $x$- and $y$-axes centered at $u$, coupled in a way described in the next theorem. 

For $\alpha\in\{\lambda, \rho\}$, the definition of the process $G^\alpha_{u,\bbullet}$  goes as follows.  
 The  boundary weights are denoted by  
	$\{I^\alpha_{u+ie_1}, J^\alpha_{u+je_2}:i, j\in\Z_{>0}\}$.   Put  $G^\alpha_{u,u}=0$ and on the boundaries 
	\be\label{Gr11a} G^\alpha_{u,\,u+\,ke_1}=\sum_{i=1}^k I^\alpha_{ie_1} 
	\quad\text{and}\quad
	G^\alpha_{u,\,u+\,le_2}= \sum_{j=1}^l  J^\alpha_{je_2}  \quad\text{ for } k,l\ge 1.   \ee 
	In the bulk 
	for $x=(x_1,x_2)\in u+ \Z_{>0}^2$, 
	\be\label{Gr12a}\begin{aligned} 
	G^\alpha_{u,\,x}&= \max_{1\le k\le x_1-u_1} \;  \Bigl\{  \;\sum_{i=1}^k I^\alpha_{u+ie_1}  + G_{u+ke_1+e_2, \,x} \Bigr\}  
	\bigvee
	\max_{1\le \ell\le x_2-u_2}\; \Bigl\{  \;\sum_{j=1}^\ell  J^\alpha_{u+je_2}  + G_{u+e_1+\ell e_2, \,x} \Bigr\} \\
	&=G^\alpha_{u,\,x-e_1}\vee G^\alpha_{u,\,x-e_2} + \w_x.
	\end{aligned} \ee
	 $G^\alpha_{u,\bbullet}$ does not use a weight at the base point $u$.  Above $G_{x,y}$ is the LPP process \eqref{v:G} that uses the bulk weights $\w$. 
Define increment variables  for vertices $x\in u+\Z_{>0}^2$  by 
\be\label{IJ85}    I^\alpha_x=G^\alpha_x-G^\alpha_{x-e_1} 
\quad\text{and}\quad 
J^\alpha_x=G^\alpha_x-G^\alpha_{x-e_2}. 
\ee
An important part of the next theorem for the sequel is the independence of various collections of increment variables.  These are illustrated in Figure \ref{fig:indIJ}. 
 
 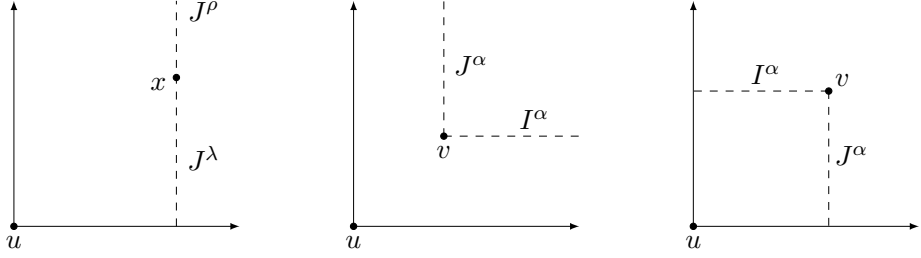
\begin{figure}
	\begin{tikzpicture}[scale=0.6, every node/.style={transform shape}]
	\def\s{1.6}
	\draw[->,line width=0.08mm,>=latex]   (0,0) --(0,5); 
	\draw[->,line width=0.08mm,>=latex]   (0,0) --(5,0); 
	\node [scale=\s][below] at (0,0) {$u$};
	\draw [fill] (0,0) circle [radius=0.07];
	\node [scale=\s][below] at (3.2,3.5) {$x$};
	\draw [dashed][line width=0.01cm] (3.6,0) -- (3.6,5);
	\node [scale=\s][below] at (4.2,2) {$J^\lambda$};
	\node [scale=\s][below] at (4.2,5.2) {$J^\rho$};
	\draw [fill] (3.6,3.3) circle [radius=0.07];
	\end{tikzpicture}
	\qquad\quad 
	\begin{tikzpicture}[scale=0.6, every node/.style={transform shape}]
	\def\s{1.6}
	\draw[->,line width=0.08mm,>=latex]   (0,0) --(0,5); 
	\draw[->,line width=0.08mm,>=latex]   (0,0) --(5,0); 
	\node [scale=\s][below] at (0,0) {$u$};
	\draw [fill] (0,0) circle [radius=0.07];
	\node [scale=\s][below] at (2,2) {$v$};
	\draw [fill] (2,2) circle [radius=0.07];
	\draw [dashed][line width=0.01cm] (2,2) -- (2,5);
	\draw [dashed][line width=0.01cm] (2,2) -- (5,2);
	\node [scale=\s][below] at (4,2.8) {$I^\alpha$};
	\node [scale=\s][below] at (2.6,4) {$J^\alpha$};
	\end{tikzpicture}
	\qquad\quad  
	\begin{tikzpicture}[scale=0.6, every node/.style={transform shape}]
	\def\s{1.6}
\draw[->,line width=0.08mm,>=latex]   (0,0) --(0,5); 
\draw[->,line width=0.08mm,>=latex]   (0,0) --(5,0); 
\node [scale=\s][below] at (0,0) {$u$};
\draw [fill] (0,0) circle [radius=0.07];
\node [scale=\s][below] at (3.3,3.6) {$v$};
\draw [fill] (3,3) circle [radius=0.07];
\draw [dashed][line width=0.01cm] (3,3) -- (0,3);
\draw [dashed][line width=0.01cm] (3,3) -- (3,0);
\node [scale=\s][below] at (1.6,3.8) {$I^\alpha$};
\node [scale=\s][below] at (3.5,2) {$J^\alpha$};
	\end{tikzpicture}
	\caption{\small The independent increment variables from Theorem \ref{th:st-lpp}. Left: $J^\lambda$ below $x$ and $J^\rho$ above $x$ from part (i). Middle and right: $I^\alpha$ and $J^\alpha$ increments on down-right lattice paths from part (ii).}
	\label{fig:indIJ} 
\end{figure}

\begin{theorem}\label{th:st-lpp}   Let  $0<\lambda<\rho<1$ and $u\in\Z^2$.  There exists a coupling of the boundary weights $\{I^\lambda_{u+ie_1},I^\rho_{u+ie_1}, 	J^\lambda_{u+je_2}$, $J^\rho_{u+je_2}:i, j\in\Z_{>0}\}$  such that the joint process $(G^\lambda_{u,\bbullet}\,, G^\rho_{u,\bbullet})$ has the following properties.  
\begin{enumerate}[{\rm(i)}] \itemsep=5pt 
\item {\rm(Joint)}     The joint process of increments is stationary: for each $v\in u+\Z_{\ge0}^2$,  
\be\label{Gr17}    \bigl\{(G^\lambda_{u,v+x}-G^\lambda_{u,v}, G^\rho_{u,v+x}-G^\rho_{u,v}): x\in\Z_{\ge0}^2\bigr\}
\deq
 \bigl\{(G^\lambda_{u,u+x}, G^\rho_{u, u+x}): x\in\Z_{\ge0}^2\bigr\}.
 \ee
The following independence property holds along vertical lines: for each $x\in u+\Z_{>0}^2$, the variables $\{ J^\lambda_{x+je_2}: u_2-x_2+1\le j\le 0\}$ and $\{ J^\rho_{x+je_2}: j\ge 1\}$ are mutually independent.  

\item {\rm(Marginal)}  For both $\alpha\in\{\lambda, \rho\}$ and for each $v\in u+\Z_{\ge0}^2$,  the increment variables  $\{I^\alpha_{v+ie_1}, J^\alpha_{v+je_2}:i, j\in\Z_{>0}\}$	 are mutually independent with marginal distributions 
\be\label{Gr23} 
I^\alpha_{v+ie_1}\sim\text{\rm Exp}(1-\alpha) \quad \text{ and }\quad J^\alpha_{v+je_2}\sim\text{\rm Exp}(\alpha). 
\ee
The same is true for the variables $\{I^\alpha_{v-ie_1}, J^\alpha_{v-je_2}: 0\le i <  v_1-u_1,\, 0\le j< v_2-u_2\}$. 	
\end{enumerate} 
\end{theorem}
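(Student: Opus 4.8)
The plan is to build the coupled pair $(G^\lambda_{u,\bbullet},G^\rho_{u,\bbullet})$ as a \emph{single} stationary object by running a two-type queueing / Burke-type construction, rather than prescribing the boundary weights first and checking properties afterwards. By translation invariance we may take $u=0$. The key device is the \emph{multiline} or \emph{nested} stationary LPP: one constructs a vertical-line process of pairs $(I^\lambda,I^\rho)$ and a horizontal-line process of pairs $(J^\lambda,J^\rho)$ on the axes that are (a) independent of each other, (b) individually have the product-of-exponentials law required by part (ii) for $\alpha=\lambda$ and $\alpha=\rho$ separately, and (c) are coupled across the two parameters exactly so that the map pushing boundary data through one lattice cell via \eqref{Gr12a} preserves the joint law. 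The cell map is the familiar LPP update $(I,J)\mapsto(I',J')=(J+(I-J)^+,\ I+(J-I)^+)$ applied simultaneously with the \emph{same} bulk weight $\w$ to both the $\lambda$ and the $\rho$ layer; one checks that under the correct input law this map is measure preserving, which is precisely the classical Burke property for exponential LPP applied in a two-parameter coupled form. The source of the joint input law is Theorem~\ref{th:st-lpp}'s acknowledged origin, namely the stationary distribution of the joint LPP process with multiple characteristic directions discovered in \cite{fan-sepp-arxiv}; here we only need the bivariate specialization, so the plan is to cite that construction and extract the two properties we need.

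Concretely, I would proceed in the following steps. First, recall (or quote from \cite{fan-sepp-arxiv}) the bivariate stationary boundary law: there is a coupling of $\{(I^\lambda_{ie_1},I^\rho_{ie_1})\}_{i\ge1}$ on the $x$-axis and $\{(J^\lambda_{je_2},J^\rho_{je_2})\}_{j\ge1}$ on the $y$-axis, the two families independent of each other and each i.i.d.\ in its index, with the single-site marginals of \eqref{Gr23}, and with the property that the coupled cell update above maps this law to itself. Second, define $G^\alpha_{0,\bbullet}$ for $\alpha\in\{\lambda,\rho\}$ by \eqref{Gr11a}--\eqref{Gr12a} using these boundary weights together with the common bulk field $\{\w_x\}_{x\in\Z_{>0}^2}$. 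Third, prove part (ii): using the cell-update invariance and induction on the lattice (peeling off south-west cells along down-right staircases), show that for every down-right path through a vertex $v$ the increments $I^\alpha$ and $J^\alpha$ along it are independent with the stated exponential marginals; this is the standard Burke-type argument and the ``$v-ie_1$, $v-je_2$'' half follows from the same invariance run in the reversed direction. Fourth, prove the joint stationarity \eqref{Gr17}: the pair of increment fields at $v+\Z_{\ge0}^2$ is obtained from the pair at $\Z_{\ge0}^2$ by iterating the measure-preserving coupled cell map $v_1+v_2$ times along any up-right lattice path from $0$ to $v$, so the joint law is unchanged. Fifth, prove the vertical independence in part (i): fix $x\in\Z_{>0}^2$; the variables $\{J^\lambda_{x+je_2}:-x_2+1\le j\le0\}$ are measurable with respect to the boundary data and bulk weights in the closed region south-west of the segment $[\,x-x_2e_2,\,x\,]$, while $\{J^\rho_{x+je_2}:j\ge1\}$ are, by the reversed cell construction (queueing output on a line is independent of the arrivals that fed it, conditionally on the line data), measurable with respect to a complementary independent $\sigma$-algebra together with the line data at height $x_2$ — and the cross-parameter coupling was chosen precisely so that these two are unconditionally independent. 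This last point is the genuinely new content over the one-parameter Burke theory and is exactly what \cite{fan-sepp-arxiv} supplies.

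The main obstacle is Step five, the cross-direction independence of $\{J^\lambda \text{ below } x\}$ from $\{J^\rho \text{ above } x\}$. In the single-parameter setting, independence of increments below a vertex from increments above it is \emph{false} (they are coupled through the bulk), so the statement can only hold because the two collections carry different parameters ($\lambda$ versus $\rho$) and the bivariate stationary measure is engineered so that, along a vertical line, the $\lambda$-layer seen from below and the $\rho$-layer seen from above decouple. Making this rigorous requires the explicit description of the joint boundary law from \cite{fan-sepp-arxiv} and a careful bookkeeping of which weights each variable depends on — specifically, an argument that the $\rho$-increments above $x$ depend on the bulk only through the line $\{x_2 e_2 + \Z_{>0}e_1\}$-type data that is, under the stationary measure, independent of everything the $\lambda$-increments below $x$ see. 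I would isolate this as a lemma about the bivariate boundary process and prove it by the queueing interpretation: treating the vertical coordinate as time, $J^\lambda$ and $J^\rho$ are the two coupled ``departure'' sequences of a two-class queue, the portion of $J^\lambda$ before time $x_2$ is a function of early arrivals/services, the portion of $J^\rho$ after time $x_2$ is a function of the queue state at time $x_2$ plus later arrivals/services, and the stationary two-class Burke property gives the required independence. Everything else (Steps one through four) is routine given Theorem~\ref{th:st-lpp}'s stated provenance, modulo correctly transcribing the bivariate stationary law.
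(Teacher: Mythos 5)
Your route is essentially the paper's: both constructions draw the coupled boundary law from the bivariate queueing theory of \cite{fan-sepp-arxiv}, establish the single-parameter Burke property for each $\alpha\in\{\lambda,\rho\}$, and derive the joint stationarity \eqref{Gr17} from invariance of that law under the coupled cell map. Two points in your sketch need repair.

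First, the cell map you wrote, $(I,J)\mapsto(J+(I-J)^+,\,I+(J-I)^+)$, collapses to $(\max(I,J),\max(I,J))$ and drops the bulk weight. The correct update is $I'=\w+(I-J)^+$, $J'=\w+(J-I)^+$; the shared $\w$ feeding both layers is precisely what makes the coupled preservation of law a nontrivial fact. Relatedly, the claim that the boundary pairs are ``i.i.d.\ in its index'' is false for the joint law: in the paper's coupling $(\Jvec^\rho,\Jvec^\lambda)=(\Yvec^\rho,\Dop(\Yvec^\lambda,\Yvec^\rho))$, the departure operator $\Dop$ introduces serial dependence in $j$, so i.i.d.\ holds only for each one-parameter marginal.

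Second, and more seriously, the Step-five argument for the vertical independence in part (i) does not close as written. You argue that $\{J^\lambda_{x+je_2}:j\le 0\}$ and $\{J^\rho_{x+je_2}:j\ge 1\}$ sit in complementary $\sigma$-algebras, but for $x_1>u_1$ both collections are functions of the \emph{same} bulk weights in the columns $u_1+1,\dots,x_1$, so there is no disjointness of generating data to exploit. The paper's argument is distributional rather than measurability-based: construct the joint process on the right half-plane with \emph{bi-infinite} $y$-axis boundary $\Jvec^\rho=\Yvec^\rho$ i.i.d.\ and $\Jvec^\lambda=\Dop(\Yvec^\lambda,\Yvec^\rho)$, and prove that the full bivariate law on column $k$ satisfies $(\Jvec^{\rho,k},\Jvec^{\lambda,k})\deq(\Jvec^\rho,\Jvec^\lambda)$; the input that makes this work is the commutation identity $\Dop(\Dop(\barrv,\arrv),\servv)=\Dop(\Dop(\barrv,\Rop(\arrv,\servv)),\Dop(\arrv,\servv))$ of Lemma \ref{lm:DR}. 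Once the joint law on every column equals the $y$-axis law, the vertical independence is read off the $y$-axis directly: $\{J^\lambda_j:j\le m\}$ is a function of $\{Y^\lambda_i,Y^\rho_i:i\le m\}$ while $\{J^\rho_j:j>m\}=\{Y^\rho_j:j>m\}$ (Lemma \ref{lm:DR5}(v)). Your phrase ``stationary two-class Burke property'' points at the right mechanism, but you need to spell out that the independence on a general column is inherited only through the invariance of the joint queueing law under the column map — not through a partition of the underlying randomness.
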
 

\begin{proof} 
We  construct  a  joint LPP process $(L^\lambda_{x}, L^\rho_{x})_{x\,\in\,u+\Z_{\ge0}\times\Z}$ on the discrete  right half-plane with origin at $u$.  
First define the  boundary weights  $\Jvec^\lambda=\{J^\lambda_{u+je_2}\}_{j\in\Z}$ and  $\Jvec^\rho=\{J^\rho_{u+je_2}\}_{j\in\Z}$ 
on the $y$-axis centered at $u$.    For $\alpha\in\{\lambda, \rho\}$ let $\Yvec^\lambda=\{Y^\lambda_j\}_{j\in\Z}$ and $\Yvec^\rho=\{Y^\rho_j\}_{j\in\Z}$   be independent  sequences of  i.i.d.\ variables with marginal distributions  $Y^\alpha_j\sim$ Exp$(\alpha)$. Then define $(\Jvec^\rho, \Jvec^\lambda)=(\Yvec^\rho, \Dop(\Yvec^\lambda, \Yvec^\rho))$ in terms of the departure process operator $\Dop$  from \eqref{DSR} in Appendix \ref{app:queues}.   This gives coupled sequences $(\Jvec^\rho, \Jvec^\lambda)$.  

  For $\alpha\in\{\lambda, \rho\}$ define the LPP values on the $y$-axis  by 
\be\label{Ly}  
L^\alpha_{u}=0, \quad  L^\alpha_{u+je_2}- L^\alpha_{u+(j-1)e_2} = J^\alpha_{u+je_2}\quad \text{for } j\in\Z. 
 \ee
This results in negative values  $L^\alpha_{u+je_2}$ for $j<0$.  Complete the definitions by putting, again for $\alpha\in\{\lambda, \rho\}$ and now for $x\in u+\Z_{>0}\times\Z$, 
\be\label{Ly4}  L^\alpha_{x}=\sup_{j: j\le x_2-u_2} \bigl\{  L^\alpha_{u+je_2} + G_{u+e_1+je_2, x}\bigr\}, \quad 
I^\alpha_x=L^\alpha_{x}-L^\alpha_{x-e_1} 
\quad\text{and}\quad
J^\alpha_x=L^\alpha_{x}-L^\alpha_{x-e_2}. 
\ee

For $k\ge 0$, denote the sequences of $J$-increments  on the vertical line shifted by $ke_1$ from the $y$-axis by 
$\Jvec^{\alpha,k}=\{J^{\alpha,k}_j\}_{j\in\Z}=\{J^\alpha_{u+ke_1+je_2}\}_{j\in\Z}$ and the sequences of weights by $\servv^k=\{\serv^k_j\}_{j\in\Z}=\{\w_{u+ke_1+je_2}\}_{j\in\Z}$.    $\Jvec^{\alpha,0}$ is the original boundary sequence $\Jvec^\alpha$ we began with.   Then in terms of Lemma \ref{lm:DR5} we have the following. With $(\sigma, \alpha_1, \alpha_2)=(1,\rho, \lambda)$,   $( \Jvec^\rho, \Jvec^\lambda)$ has the distribution of $(\arrv^1, \arrv^2)$ and for each $k\ge 1$ and $\alpha\in\{\lambda, \rho\}$,  $\Jvec^{\alpha,k}=\Dop( \Jvec^{\alpha,k-1}, \servv^k)$.  Repeated application of Lemma \ref{lm:DR5} implies the distributional equality  $( \Jvec^{\rho,k},  \Jvec^{\lambda,k})\deq( \Jvec^\rho, \Jvec^\lambda)$  for all $k\ge 0$.  

The evolution in \eqref{Ly4} satisfies a semigroup property:  
for each $k$ the values $L^\alpha_x$   for $x_1\ge u_1+k+1$ satisfy 
\[    L^\alpha_{x}=\sup_{j: j\le x_2-u_2} \bigl\{  L^\alpha_{u+ke_1+je_2} + G_{u+(k+1)e_1+je_2, x}\bigr\}. 
\] 
 It follows that the entire process of increments is invariant under translations that keep it in the half-space:  for $z\in \Z_{\ge0}\times\Z$, 
\be\label{L55} \begin{aligned}  
&\{ I^\lambda_{z+x+e_1}, I^\rho_{z+x+e_1}, J^\lambda_{z+x}, J^\rho_{z+x}: x\in u+\Z_{\ge0}\times\Z\}   \\
&\qquad 
\deq
\{ I^\lambda_{x+e_1}, I^\rho_{x+e_1}, J^\lambda_x, J^\rho_x: x\in u+\Z_{\ge0}\times\Z\}. 
\end{aligned}\ee
(The index is $x+e_1$ rather than $x$ in the $I$-increments simply because these are not defined on the boundary where $x_1=u_1$.) 

We claim that for $\alpha\in\{\lambda, \rho\}$  and for any new base point  $v\in u+\Z_{\ge0}\times\Z$,
\be\label{L57} \begin{aligned} 
  &\text{$\{I^\alpha_{v+ie_1}, J^\alpha_{v+je_2}:i, j\in\Z_{>0}\}$	are mutually independent with marginal distributions}\\
   &\qquad 
   I^\alpha_{v+ie_1}\sim\text{\rm Exp}(1-\alpha)\quad \text{ and } \quad  J^\alpha_{v+je_2}\sim\text{\rm Exp}(\alpha).
\end{aligned} \ee
Since everything is shift-invariant, we can take  $v=u$.  As observed  above,  $\Jvec^\alpha$ is a sequence of i.i.d.\ Exp$(\alpha)$ random variables  by Lemma \ref{lm:DR5}(i). Thus it suffices to prove the marginal statement about $\{I^\alpha_{u+ie_1}:i\ge1\}$ because these variables are a function of $\{J^\alpha_{u+je_2},\, \w_{(i,j)}:i\ge1,  j\le0\}$ which are independent of $\{ J^\alpha_{u+je_2}: j\ge1\}$. 

The claim for $\{I^\alpha_{u+ie_1}:i\ge1\}$ follows from proving inductively the following statement for each $n\ge 1$: 
\be\label{L59} \begin{aligned} 
  &\text{$\{I^\alpha_{u+ie_1}, J^\alpha_{u+ne_1+je_2}: 1\le i\le n, j\le0\}$	are mutually independent with}\\
  & \text{marginal distributions}\quad 
   I^\alpha_{u+ie_1}\sim\text{\rm Exp}(1-\alpha)\quad \text{ and } \quad  J^\alpha_{u+ne_1+je_2}\sim\text{\rm Exp}(\alpha).
\end{aligned} \ee
This claim  is a consequence of Lemma \ref{lm:DR5}(ii).  Begin with the case $n=1$.  The inputs are now inter-arrival times 
$\{\arr_j=J^\alpha_{u+je_2}:j\le0\}$ and service times $\{\serv_j=\w_{(1,j)}: j\le 0\}$, out which we compute the inter-departure times $\{\depa_j=J^\alpha_{u+e_1+je_2}:j\le0\}$ and the sojourn time $\sojo_0=I^\alpha_{u+e_1}$.   Continue inductively.  Assume that \eqref{L59} holds for a given $n$. Then feed to the queueing operators inter-arrival times 
$\{\arr_j=J^\alpha_{u+ne_1+je_2}:j\le0\}$ and service times $\{\serv_j=\w_{(n+1,j)}: j\le 0\}$, all independent of $\{I^\alpha_{u+ie_1}: 1\le i\le n\}$.   Compute the inter-departure times $\{\depa_j=J^\alpha_{u+(n+1)e_1+je_2}:j\le0\}$ and the sojourn time $\sojo_0=I^\alpha_{u+(n+1)e_1}$.    Lemma \ref{lm:DR5}(ii) extends the validity of \eqref{L59} to  $n+1$.  Claim \eqref{L57} has been verified. 

To prove Theorem \ref{th:st-lpp}, take the coupled boundary weights 
$\{ I^\alpha_{u+ie_1}, J^\alpha_{u+je_2}:  i,j\ge 1, \alpha\in\{\lambda, \rho\}\}$ as constructed above.   The LPP process $\{G^\alpha_{u,x}: x\in u+\Z_{\ge0}^2\}$ defined by \eqref{Gr11a}--\eqref{Gr12a} is then exactly the same as the restriction  $\{L^\alpha_x: x\in u+\Z_{\ge0}^2\}$   of $L^\alpha$.  Namely, \eqref{Gr12a} can be rewritten as follows:
\begin{align*}
G^\alpha_{u,\,x}&= \max_{1\le k\le x_1-u_1} \;  \bigl\{  L^\alpha_{u+ke_1}   + G_{u+ke_1+e_2, \,x} \bigr\}  
	\bigvee
	\max_{1\le \ell\le x_2-u_2}\; \bigl\{  L^\alpha_{u+\ell e_2}  + G_{u+e_1+\ell e_2, \,x} \bigr\}\\
	&= \sup_{j\le 0}   \;  \bigl\{  L^\alpha_{u+je_2}  +   \max_{1\le k\le x_1-u_1} \bigl[  G_{u+e_1+je_2, u+ke_1}   + G_{u+ke_1+e_2, \,x}  \bigr] \,\bigr\}  \\
	&\qquad\qquad \qquad 
	\bigvee
	\max_{1\le \ell\le x_2-u_2}\; \bigl\{  L^\alpha_{u+\ell e_2}  + G_{u+e_1+\ell e_2, \,x} \bigr\} \\
	&=   \sup_{j: j\le x_2-u_2} \bigl\{  L^\alpha_{u+je_2} + G_{u+e_1+je_2, x}\bigr\} 
	 \; =\;  L^\alpha_x. 
\end{align*}
Invariance \eqref{Gr17} comes from \eqref{L55}.  
 The first statement of part (ii) of the theorem comes from \eqref{L57}, the second statement from   \eqref{L59}. 
\end{proof}

\section{Bounds for geodesic fluctuations} \label{s:bound}

Let $G^\rho_{u, \bbullet}$ be a stationary LPP process with base point $u$ as described in Theorem \ref{th:st-lpp}, with independent boundary weights $I_{u+ie_1}\sim$ Exp$(1-\rho)$ and $J_{u+je_2}\sim$ Exp$(\rho)$ for $i,j\ge1$.  
	For a  northeast endpoint  $x\in u+\Z_{>0}^2$, let $Z^\rho_{u,x}$  be the signed exit point of the geodesic $\pi^{\rho,u,x}_\bbullet$ of $G^\rho_{u,x}$ from the west and south boundaries  of $u+\Z_{\ge0}^2$. More precisely,
	\be\label{Zdef} 
	Z^\rho_{u,x}=
	\begin{cases}
	\argmax{k} \bigl\{ \,\sum_{i=1}^k I_{u+ie_1}  + G_{u+ke_1+e_2, \,x} \bigr\},  &\text{if } \pi^{\rho,u,x}_1=u+e_1,\\
	-\argmax{\ell}\bigl\{  \;\sum_{j=1}^\ell  J_{u+je_2}  + G_{u+\ell e_2+e_1, \,x} \bigr\},  &\text{if } \pi^{\rho,u,x}_1=u+e_2.
	\end{cases}
	\ee 

The open line segment of interior directions is denoted by $]e_2, e_1[=\{(s, 1-s): 0<s<1\}$.   The parameter $\rho\in(0,1)$ of the stationary LPP process is in one-to-one correspondence with a direction vector $\xi=(\xi_1, 1-\xi_1)\in\,]e_2, e_1[$ through these equations: 
\be \label{xi-rho} 
\xi=\xi(\rho)  
=\left(\frac{(1-\rho)^2}{(1-\rho)^2+\rho^2}\,,\frac{\rho^2}{(1-\rho)^2+\rho^2}\right)
\ \iff\ 
\rho=\rho(\xi) 
=\frac{\sqrt{1-\xi_1}}{\sqrt{\xi_1}+\sqrt{1-\xi_1}}. 
\ee
Direction  $\xi(\rho)$ is called  the  {\it  characteristic direction}   associated to the parameter $\rho$.  A key property  that distinguishes $\xi(\rho)$  among all    $\eta\in\,]e_2, e_1[$ is that  $\abs{Z^\rho_{u,u+\fl{N\eta}}}=o(N)$ almost surely  if and only if $\eta=\xi(\rho)$. 
Write the characteristic direction 
as 
\[  \xi(\rho)=(\xi_1(\rho), \xi_2(\rho))=\aaa{\rho}((1-\rho)^2, \rho^2) \]  
by introducing  
\be\label{a-xi}
 \aaa{\rho}=  \frac1{(1-\rho)^2+ \rho^2} .\ee
Note the bounds  $1\le \aaa{\rho}\le 2$.     

This section derives basic estimates  for later use.   We take the base point as the origin $u=0$ but in later applications the base point will vary.  
Abbreviate the sum of boundary weights on the $x$-axis as  $S^\rho_k=\sum_{i=1}^k I^\rho_{ie_1}=G^\rho_{0,ke_1}$.   The starting point for the estimates is the variance formula of the next theorem. 

\begin{theorem} \label{thGr3} 
For $(m,n)\in\Z_{>0}^2$, 
\begin{align}\label{VGr}
\Vvv[G^\rho_{0,(m,n)}] &=  -\,\frac{m}{(1-\rho)^2}+\frac{n}{\rho^2} +\frac{2}{1-\rho} \, \E
\bigl[ S^\rho_{(Z^\rho_{0, (m,n)})^+}\bigr]  . 
\end{align} 
\end{theorem}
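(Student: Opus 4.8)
The plan is to derive the variance formula from the structure of the stationary LPP process, using the fact that under the increment-stationary measure the boundary increments $I^\rho_{ie_1}\sim\mathrm{Exp}(1-\rho)$ and $J^\rho_{je_2}\sim\mathrm{Exp}(\rho)$ are mutually independent, together with an exact decomposition of $G^\rho_{0,(m,n)}$ along the exit point of its geodesic. First I would write, using the boundary representation coming from \eqref{Gr11a}--\eqref{Gr12a} and the definition \eqref{Zdef} of the signed exit point,
\[
G^\rho_{0,(m,n)} = S^\rho_{Z^+} + \widetilde G_{Z}, \qquad Z = Z^\rho_{0,(m,n)},
\]
where on $\{Z\ge 0\}$ we have $S^\rho_{Z^+}=\sum_{i=1}^{Z}I^\rho_{ie_1}$ and $\widetilde G_Z = G_{Ze_1+e_2,(m,n)}$ depends only on bulk weights, and symmetrically on $\{Z<0\}$ the passage value splits off a sum of $J^\rho$-increments. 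The point is that $G^\rho_{0,(m,n)}$, $S^\rho_{Z^+}$, and the complementary sum are all expressible through the same exit-point decomposition, so the variance can be attacked by the standard ``variance identity'' trick.

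The key algebraic step is the identity, valid because increments along any down-right path through $(m,n)$ are independent with known exponential laws (Theorem \ref{th:st-lpp}(ii)),
\[
G^\rho_{0,(m,n)} = \sum_{i=1}^m I^\rho_{(i,0)} + \sum_{j=1}^n J^\rho_{(m,j)} - (\text{correction terms supported on } \{Z\neq 0\}),
\]
more precisely the telescoping relation that expresses $G^\rho_{0,(m,n)}$ as a sum of $m$ horizontal increments on the bottom edge plus $n$ vertical increments on the right edge minus the overlap. Then I would compute $\mathbb{V}\mathrm{ar}$ by the polarization/covariance method used in the Balázs--Cator--Seppäläinen and Seppäläinen CGM papers: write $G^\rho_{0,(m,n)} = S^\rho_m + (G^\rho_{0,(m,n)} - S^\rho_m)$ where $S^\rho_m$ is the bottom-edge sum, use $\mathbb{V}\mathrm{ar}(S^\rho_m) = m/(1-\rho)^2$, compute the variance of the second term via its own exit-point representation (it is a stationary LPP increment, so has variance $n/\rho^2$ up to boundary effects), and control the cross term $\mathrm{Cov}(S^\rho_m, G^\rho_{0,(m,n)}-S^\rho_m)$. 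The cross term is where the exit point enters: a competition/monotonicity argument shows that perturbing the bottom-edge weights changes $G^\rho_{0,(m,n)}$ exactly through $S^\rho_{Z^+}$, which produces the $\frac{2}{1-\rho}\,\mathbb{E}[S^\rho_{(Z^+)}]$ term after using $\mathbb{E}[I^\rho] = 1/(1-\rho)$ and $\mathrm{Cov}$-identities for the exponential.

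Concretely, the cleanest route is the well-known one: differentiate the known formula $\mathbb{E}[G^\rho_{0,(m,n)}] = \frac{m}{1-\rho} + \frac{n}{\rho}$ in a parameter, or equivalently use the coupling identity that for the stationary process,
\[
\mathbb{V}\mathrm{ar}[G^\rho_{0,(m,n)}] \;=\; \mathbb{V}\mathrm{ar}[S^\rho_m] \;-\; \mathbb{V}\mathrm{ar}\Big[\sum_{j=1}^{n} J^\rho_{(m,j)}\Big] \;+\; 2\,\mathrm{Cov}\big(S^\rho_m,\, G^\rho_{0,(m,n)}\big) \;-\; \text{(telescoping)},
\]
and then identify $\mathrm{Cov}(S^\rho_m, G^\rho_{0,(m,n)})$ with $\frac{1}{1-\rho}\mathbb{E}[S^\rho_{(Z^+)}]$ using that $\partial_{I_{ie_1}} G^\rho_{0,(m,n)} = \mathbf 1\{i \le Z^+\}$ together with the Exp$(1-\rho)$ integration-by-parts (Stein-type) identity $\mathrm{Cov}(I, F) = \frac{1}{1-\rho}\mathbb{E}[\partial_I F]$ for functions $F$ of an Exp$(1-\rho)$ variable. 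Collecting terms and using $\mathbb{V}\mathrm{ar}[S^\rho_m] = m/(1-\rho)^2$ and the corresponding $J$-sum variance $n/\rho^2$ gives \eqref{VGr}, with the sign on the $m$-term coming out negative precisely because of the overlap/telescoping cancellation between the bottom and right edges. The main obstacle I anticipate is making the covariance-to-exit-point identification fully rigorous: one must justify the a.e.\ differentiability of $G^\rho_{0,(m,n)}$ in the boundary weights (true because ties have probability zero under continuous weights), justify the interchange of expectation and derivative (dominated convergence, using that the derivative is an indicator hence bounded), and correctly track the telescoping cancellation that turns two positive edge-variances into the stated combination. These are exactly the steps carried out in the cited CGM literature, so I would follow that template closely rather than reinventing it.
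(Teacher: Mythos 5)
Your overall strategy is the same as the paper's: decompose $G^\rho_{0,(m,n)}$ along the bottom-right and left-top boundary paths, use the independence structure of Theorem \ref{th:st-lpp}(ii) to reduce the variance to a single covariance term, and evaluate that covariance by differentiating with respect to the boundary weights so that the exit point $Z^\rho_{0,(m,n)}$ appears through $\partial_{I_{ie_1}} G = \mathbf 1\{i \le Z^+\}$. Taking $\Cov(S^\rho_m, G^\rho_{0,(m,n)})$ instead of the paper's $\Cov\bigl(\sum_i I^\rho_{(i,0)}, \sum_i I^\rho_{(i,n)}\bigr)$ is harmless since the two agree (the left-edge $J$-sum is independent of the bottom-edge $I$-sum). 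Your handwaved ``(telescoping)'' variance identity has a sign issue as written, but the correct one-line identity $\Vvv[G] = -\Vvv[S^\rho_m] + \Vvv\bigl[\sum_j J^\rho_{(m,j)}\bigr] + 2\,\Cvv(S^\rho_m, G)$ follows from $G = S^\rho_m + \sum_j J^\rho_{(m,j)}$ plus $\Cvv(S^\rho_m, G) = \Cvv\bigl(S^\rho_m, \sum_j J^\rho_{(m,j)}\bigr) + \Vvv[S^\rho_m]$, so that is easily patched.

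The real gap is your stated integration-by-parts identity $\Cov(I, F) = \tfrac{1}{1-\rho}\,\E[\partial_I F]$: this is false for exponentials. (Test $F=I$: the left side is $(1-\rho)^{-2}$ but your right side gives $(1-\rho)^{-1}$.) The correct identity for $I\sim\mathrm{Exp}(1-\rho)$ is
\[
\Cov\bigl(I, F(I)\bigr) \;=\; \frac{1}{1-\rho}\,\E\bigl[\,I\,\partial_I F(I)\,\bigr],
\]
which follows by integrating $(x - (1-\rho)^{-1})(1-\rho)e^{-(1-\rho)x} = -\tfrac{d}{dx}\bigl[x e^{-(1-\rho)x}\bigr]$ against $F$ and integrating by parts. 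The extra factor of $I$ inside the expectation is essential: with it, $\Cov(S^\rho_m, G) = \tfrac{1}{1-\rho}\sum_i \E\bigl[I^\rho_{ie_1}\mathbf 1\{i\le Z^+\}\bigr] = \tfrac{1}{1-\rho}\E\bigl[S^\rho_{Z^+}\bigr]$, matching \eqref{VGr}. Without it, your formula would yield $\tfrac{1}{1-\rho}\E[Z^+]$, which is strictly smaller (since $I^\rho_{ie_1}$ and $\{i\le Z^+\}$ are positively correlated) and is the wrong answer. This missing factor is precisely the point of the paper's remark that one should ``express the boundary variables $I^{\lambda,\rho}_{(i,0)}$ as functions of uniform random variables and take the differentiation inside the expectation'': writing $I = -\lambda^{-1}\log U$ and differentiating in $\lambda$ produces $\partial_\lambda I = -\lambda^{-1}I$, which is how the extra $I$ enters.
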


 \begin{proof}[Sketch of proof]  We give the main steps of the argument. Detailed proofs appear  in Lemma 4.6 of \cite{bala-cato-sepp} and in Section 5.3 of \cite{sepp-cgm-18}. 
 Utilizing 
 \[G^\rho_{0,(m,n)}
 =\sum_{i=1}^m   I^\rho_{(i,0)} +\sum_{j=1}^n J^\rho_{(m,j)} 
 =\sum_{j=1}^n J^\rho_{(0,j)} + \sum_{i=1}^m   I^\rho_{(i,n)}  
 \]
 and  the independence of  $\{ I^\rho_{(i,n)}, J^\rho_{(m,j)}: 1\le i\le m, 1\le j\le n\}$ from Theorem \ref{th:st-lpp}(ii), deduce 
\be\begin{aligned}
\Vvv\bigl[G^\rho_{0,(m,n)}\bigr]
 &=-\Vvv\biggl[\, \sum_{i=1}^m I^\rho_{(i,n)}\biggr] +  \Vvv\biggl[\, \sum_{j=1}^n J^\rho_{(0,j)}\biggr] +2\,\Cvv\biggl[\, \sum_{i=1}^m   I^\rho_{(i,0)} \,, \sum_{i=1}^m I^\rho_{(i,n)} \biggr] . 
\end{aligned}\label{aux3}\ee
The first two terms of \eqref{VGr} and \eqref{aux3} match.  Let $I^{\lambda,\rho}_x$ be increment variables \eqref{IJ85} for a process whose independent boundary weights satisfy  $I^{\lambda,\rho}_{(i,0)}\sim$ Exp$(\lambda)$   and $J^{\lambda,\rho}_{(0,j)}\sim$ Exp$(\rho)$.  Complete  the proof  through 
\[  \Cvv\biggl[\, \sum_{i=1}^m   I^\rho_{(i,0)} \,, \sum_{i=1}^m I^\rho_{(i,n)} \biggr] 
= - \frac{\partial}{\partial \lambda} \E\biggl[\, \sum_{i=1}^m I^{\lambda,\rho}_{(i,n)}\biggr] \bigg\vert_{\lambda=1-\rho} 
= \frac{1}{1-\rho} \, \E\bigl[ S^\rho_{(Z^\rho_{0, (m,n)})^+}\bigr]  . 
\] 
The line above comes by calculating the middle derivative in two ways.  For the left equality, 
 condition on $\sum_{i=1}^m I^{\lambda,\rho}_{(i,0)}$ and  differentiate its density.  For the right equality, express the boundary variables $I^{\lambda,\rho}_{(i,0)}$ as functions of uniform random variables and take the differentiation inside the expectation. 
 \end{proof}
 
 Next a bound on the exit point.  This CGM  result  is from \cite{bala-cato-sepp} that adapted  the seminal result from \cite{cato-groe-06}.   A proof appears also in  Section 5.4 of \cite{sepp-cgm-18}. 
 
\begin{theorem} \label{tpr6}  
For  $0<\e<\tfrac12$ and $\kappa>0$  
     there exists a finite constant  $B(\e, \kappa)$ such that   
\be\label{t8}  \P\bigl\{\abs{Z^\rho_{0,(m,n)}}\ge \ell\bigr\} \le   B(\e, \kappa)\Bigl(\,\frac{N^2}{\ell^3}  +    \frac{N^{8/3}}{\ell^4}\Bigr)
\qquad 
\text{ for  all $m,n, N, \ell \ge 1$ }  \ee
whenever $\rho\in[\e, 1-\e]$ and  $\abs{(m,n)-N\xi(\rho)}_1 \le \kappa$.
\end{theorem}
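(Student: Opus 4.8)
The plan is to adapt the Cator--Groeneboom argument \cite{cato-groe-06}, in the corner-growth form of \cite{bala-cato-sepp}. Reflecting the lattice across the diagonal exchanges the coordinate axes, turns $I$-increments into $J$-increments, replaces $\rho$ by $1-\rho$ (which keeps the range $[\e,1-\e]$) and $(m,n),N\xi(\rho)$ by $(n,m),N\xi(1-\rho)$, and flips the sign of the exit point; so it is enough to bound the right tail $\P\{Z^\rho_{0,(m,n)}\ge\ell\}$ uniformly over $\rho\in[\e,1-\e]$.

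The main idea is a factorization on the exit event. If $Z^\rho_{0,(m,n)}\ge\ell$, then the geodesic of $G^\rho_{0,(m,n)}$ follows the $x$-axis at least as far as $\ell e_1$, hence passes through $\ell e_1$, and $G^\rho_{0,(m,n)}=G^\rho_{0,\ell e_1}+\widehat G_{\ell e_1,(m,n)}$, where $\widehat G_{\ell e_1,(m,n)}$ denotes the last-passage value from $\ell e_1$ to $(m,n)$ that does not re-use the weight at $\ell e_1$; without the exit assumption only ``$\ge$'' holds. Centering and using the exact stationary means $\E G^\rho_{0,(m,n)}=\frac m{1-\rho}+\frac n\rho$ and $\E G^\rho_{0,\ell e_1}=\frac\ell{1-\rho}$ from Theorem \ref{th:st-lpp}(ii), together with the classical bound $\E\widehat G_{\ell e_1,(m,n)}\le(\sqrt{m-\ell}+\sqrt n\,)^2$, I obtain that on $\{Z^\rho_{0,(m,n)}\ge\ell\}$,
\[ \overline{G^\rho_{0,\ell e_1}}+\overline{\widehat G_{\ell e_1,(m,n)}}-\overline{G^\rho_{0,(m,n)}}=\Delta\ge h(m-\ell,n),\qquad h(a,b):=\frac a{1-\rho}+\frac b\rho-\bigl(\sqrt a+\sqrt b\,\bigr)^2. \]

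Next I would bound $\Delta$ from below using the curvature of the limit shape: $h\ge0$, $h$ vanishes precisely on the characteristic ray $\{a\rho^2=b(1-\rho)^2\}=\R_{>0}\xi(\rho)$, $h$ is strictly convex in $a$, and its second $a$-derivative is of order $1/N$ at points with $a\asymp b\asymp N$. Since $(m,n)=N\xi(\rho)+O(\kappa)$ lies on the ray while $(m-\ell,n)$ is displaced from it by $\ell$ in the first coordinate, a second-order expansion gives $\Delta\ge c(\e)\,\ell^2/N$ uniformly in $\rho\in[\e,1-\e]$, for $\ell$ up to a fixed fraction of $m$; the $O(\kappa)$ misalignment is of lower order once $\ell\gtrsim\sqrt{\kappa N}$, and for the remaining small $\ell$, and for $\ell>(1-\e)m$, the right-hand side of the asserted estimate already exceeds $1$ once $N$ is large, so those ranges are absorbed into $B(\e,\kappa)$. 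A union bound and Chebyshev on the three centered terms then yield
\[ \P\{Z^\rho_{0,(m,n)}\ge\ell\}\le\frac{C}{\Delta^2}\Bigl(\Vvv[G^\rho_{0,\ell e_1}]+\Vvv[\widehat G_{\ell e_1,(m,n)}]+\Vvv[G^\rho_{0,(m,n)}]\Bigr), \]
with $\Vvv[G^\rho_{0,\ell e_1}]=\ell/(1-\rho)^2\le\ell/\e^2$ exact, and $\Vvv[G^\rho_{0,(m,n)}],\Vvv[\widehat G_{\ell e_1,(m,n)}]\le C(\e)N^{2/3}$ while $(m-\ell,n)$ stays in a cone away from the axes (otherwise the trivial $\Vvv\le C(m+n)=O(N)$, with $\Delta\asymp N$, already gives $\P\le O(N^{-1})$). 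Since $\Delta\asymp\ell^2/N$, this is $C(\e)(N^2/\ell^3+N^{8/3}/\ell^4)$, the first term from the $\Vvv[G^\rho_{0,\ell e_1}]\asymp\ell$ contribution, the second from the $N^{2/3}$ contributions.

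The step I expect to be the real obstacle is supplying those $N^{2/3}$ variance bounds, which are not available a priori but must be extracted from the variance identity of Theorem \ref{thGr3} by a self-referential argument. That identity plus the trivial $\E[S^\rho_{(Z^\rho_{0,(m,n)})^+}]\le\E[S^\rho_m]=O(N)$ first gives $\Vvv[G^\rho_{0,(m,n)}]=O(N)$; moreover it shows $\Vvv[G^\rho_{0,(m,n)}]\asymp\E[(Z^\rho_{0,(m,n)})^+]$ up to $O(\kappa)$, while integrating the Chebyshev tail above (which depends on whatever variance bound $V$ is currently in force, plus the point-to-point variance controlled by sandwiching the point-to-point passage time between stationary ones of nearby parameters) gives $\E[(Z^\rho_{0,(m,n)})^+]\le C\bigl((VN^2)^{1/4}+N^{2/3}\bigr)$. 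The resulting inequality $V\le C(VN^2)^{1/4}+CN^{2/3}$ closes on itself and forces $V\le C'N^{2/3}$; inserting $V=C'N^{2/3}$ into the Chebyshev estimate gives exactly $C(\e)(N^2/\ell^3+N^{8/3}/\ell^4)$. Making this loop rigorous and uniform in $\rho\in[\e,1-\e]$ --- in particular the sandwich bound for the point-to-point variance, and the bookkeeping for the $O(\kappa)$ misalignment, the $O(N^{1/3})$ limit-shape correction (whose sign is favorable here), the one-weight discrepancy in the factorization, and the excluded $\ell$-ranges --- is the technical heart of the proof; complete details are in \cite{bala-cato-sepp} and Section~5.4 of \cite{sepp-cgm-18}.
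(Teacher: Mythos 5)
Your proposal is correct in spirit and takes a genuinely different route from the paper's. You go via the Cator--Groeneboom factorization: on $\{Z^\rho_{0,(m,n)}\ge\ell\}$ the geodesic of $G^\rho_{0,(m,n)}$ passes through $\ell e_1$, so $G^\rho_{0,(m,n)}=S^\rho_\ell+\widehat G_{\ell e_1,(m,n)}$, and the deterministic gap comes from the curvature of the shape function. The paper instead couples a tilted stationary boundary $\lambda=\rho+r\ell/N$ with $I^\lambda_{ie_1}\ge I^\rho_{ie_1}$, reduces the exit event to $\{S^\lambda_\ell-S^\rho_\ell\le G^\lambda_{0,(m,n)}-G^\rho_{0,(m,n)}\}$, and produces the same $\asymp\ell^2/N$ gap as the difference of two explicit stationary means. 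Both arrive at the same Chebyshev step and the same self-referential bootstrap that closes to $\E[(Z^\rho_{0,(m,n)})^+]\lesssim N^{2/3}$ and hence the asserted exponents.

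The paper's choice avoids a technical point that your sketch glosses over. For the factorization to hold as an \emph{equality} on $\{Z\ge\ell\}$ (with ``$\ge$'' in general), $\widehat G_{\ell e_1,(m,n)}$ must be the last-passage value in the mixed environment where the row $\{x_2=0,\,x_1>\ell\}$ still carries the boundary weights $I^\rho\sim\mathrm{Exp}(1-\rho)$; it is not a pure bulk passage time. Consequently ``the classical bound $\E\widehat G_{\ell e_1,(m,n)}\le(\sqrt{m-\ell}+\sqrt n\,)^2$'' is not a direct citation of the point-to-point shape bound. The inequality is true, but one has to dominate $\widehat G$ by a genuine two-sided stationary process $G^{\rho'}_{\ell e_1,(m,n)}$ with $\rho'=\sqrt n/(\sqrt{m-\ell}+\sqrt n\,)$ the characteristic parameter of $(m-\ell,n)$, coupling $I^\rho_x\le I^{\rho'}_x$ on the bottom row (which requires $\rho'\ge\rho$, true because removing $\ell$ from the first coordinate steepens the direction, up to an $O(\kappa)$ misalignment that must be absorbed for small $\ell$) and $\w_x\le J^{\rho'}_x$ on the left column. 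The same issue recurs for $\Vvv[\widehat G]$: it is not a stationary variance, and you have to sandwich $G_{\ell e_1,(m,n)}\le\widehat G\le G^{\rho'}_{\ell e_1,(m,n)}$ and exploit the $O(N^{2/3})$ gap between the two means, with both sides of that sandwich themselves in the bootstrap. In the paper's version every variance that appears --- $\Vvv[S^\lambda_\ell-S^\rho_\ell]$ and $\Vvv[G^\alpha_{0,(m,n)}]$ for $\alpha\in\{\rho,\lambda\}$ --- is that of i.i.d.\ boundary sums or of a two-sided stationary process, so the self-referential loop runs entirely off the identity \eqref{VGr} and the comparison of variances at nearby parameters. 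Your route works, but the extra comparison lemmas for the one-sided-boundary object need to be stated and proved.
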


\begin{proof}    It suffices to prove the bound 
\be\label{t11}  \P\bigl\{Z^\rho_{0,(m,n)}\ge \ell\bigr\} \le   B(\e, \kappa)\Bigl(\,\frac{N^2}{\ell^3}  +    \frac{N^{8/3}}{\ell^4}\Bigr) \ee
because the other probability $\P\{Z^\rho_{0,(m,n)}\le -\ell\}$ is  obtained by reflection across the diagonal.  
    We can assume that  $\ell\le m$ for otherwise the probability in \eqref{t11} vanishes. 
Let $0<r<1$ be a constant that will be set small enough in the proof.   Let 
\be\label{la1}
\lambda=\rho+\frac{r\ell}N. 
\ee
We take $r=r(\e, \kappa)$ at least small enough so that $rm/N<\tfrac12(1-\rho)$ for $m\le N(1-\rho)^2+\kappa$ and $N\ge 1$.  This guarantees  that  $\lambda\in(\rho, \frac{1+\rho}2)$ is also a legitimate parameter for an increment-stationary CGM. 

Couple the boundary weights so that $I^\lambda_{ie_1}\ge I^\rho_{ie_1}$.  
In the first inequality  below  use  $S^\lambda_{k}+G_{(k,1),(m,n)}\le G^\lambda_{0,(m,n)}$.  The second equality follows from $I^\lambda_{ie_1}\ge I^\rho_{ie_1}$.      Recall that $\overline X=X-\E X$.  
\begin{align}
&\P\{ Z^\rho_{0,(m,n)}\ge \ell\} =\P\{\,\exists k\ge \ell: \, S^\rho_{k}+G_{(k,1),(m,n)}=G^\rho_{0,(m,n)} \,\} \nn\\
&\quad\le  \P\{\,\exists k\ge \ell: \,  S^\lambda_{k}-S^\rho_{k}\le G^\lambda_{0,(m,n)} -G^\rho_{0,(m,n)} \,\} \nn\\
&\quad =  \P\{\,  S^\lambda_{\ell}-S^\rho_{\ell}\le G^\lambda_{0,(m,n)} -G^\rho_{0,(m,n)} \,\} \nn\\
&\quad =\P\Bigl\{\,  \overline{S^\lambda_{\ell}}- \overline{S^\rho_{\ell}}  \le  \overline{G^\lambda_{0,(m,n)}} -\overline{G^\rho_{0,(m,n)}}  -  \bigl( \,\E[S^\lambda_{\ell}-S^\rho_{\ell}] -  \E[G^\lambda_{0,(m,n)}-G^\rho_{0,(m,n)}] \,\bigr)    \,\Bigr\} . 
\label{L78}
\end{align}

Compute and bound  the means   in the last probability above.    
\begin{align}\label{65} 
&\E[S^\lambda_{\ell}-S^\rho_{\ell}]=  \ell\Bigl(\frac1{1-\lambda}-\frac1{1-\rho}\Bigr)
= \frac{\ell}{(1-\lambda)(1-\rho)} (\lambda-\rho)
= \frac{1}{(1-\lambda)(1-\rho)} \cdot  \frac{r\ell^2}{N}
\end{align} 
Introduce the quantities 
$\kappa^1_N=m-N\xi_1(\rho)$ and $\kappa^2_N=n-N\xi_2(\rho)$
that  satisfy  
$  \abs{\kappa^1_N}+\abs{\kappa^2_N}\le \kappa.  $  
Then for  the means of the LPP values, 
\be\label{66} \begin{aligned}
&\E[G^\lambda_{0,(m,n)}-G^\rho_{0,(m,n)}]  
=m\Bigl(\frac1{1-\lambda}-\frac1{1-\rho}\Bigr) + n\Bigl(\frac1{\lambda}-\frac1{\rho}\Bigr)\\
&\quad 
=\Bigl( \,\frac{m}{(1-\lambda)(1-\rho)}-\frac{n}{\lambda\rho}\,\Bigr)(\lambda-\rho) \\
&\quad 
=\aaa{\rho}  N \Bigl( \,\frac{1-\rho}{1-\lambda}-\frac{\rho}{\lambda}\,\Bigr)(\lambda-\rho)
+ \Bigl( \frac{\kappa^1_N}{(1-\lambda)(1-\rho)}-\frac{\kappa^2_N}{\lambda\rho}\Bigr)(\lambda-\rho)  \\
&\quad 
=  \frac{\aaa{\rho}N}{\lambda(1-\lambda)} (\lambda-\rho)^2
+ \Bigl( \frac{\kappa^1_N}{(1-\lambda)(1-\rho)}-\frac{\kappa^2_N}{\lambda\rho}\Bigr)(\lambda-\rho)  \\
&\quad 
=  \frac{\aaa{\rho}r^2\ell^2}{\lambda(1-\lambda)N}  
+ \Bigl( \frac{\kappa^1_N}{(1-\lambda)(1-\rho)}-\frac{\kappa^2_N}{\lambda\rho}\Bigr)\frac{r\ell}N  \\
&\quad 
\le   \frac{\aaa{\rho}r^2\ell^2}{\lambda(1-\lambda)N}  +   C_1(\e, \kappa)   \frac{r\ell}{N} .
\end{aligned}\ee
 

Comparison of   \eqref{65} and  \eqref{66}  shows that if we choose   $r$ and $c_3$ small enough  as functions of $(\e, \kappa)$, then there is a constant $\ell_0(\e, \kappa)\ge 1$ such that for $\ell\ge \ell_0(\e, \kappa)$ and $\rho\in[\e, 1-\e]$ we have 
\be\label{68}  \E[S^\lambda_{\ell}-S^\rho_{\ell}]  > \E[G^\lambda_{0,(m,n)}-G^\rho_{0,(m,n)}]   +    c_3\frac{r\ell^2}{N}. \ee 
 
We continue the bound on $\P\{ Z^\rho_{0,(m,n)}\ge \ell\} $  from line \eqref{L78} and apply \eqref{68}.    Below we pack the $(\e, \kappa)$-dependent factors 
  into a constant $C=C(\e, \kappa)$.  
\begin{align}
&\P\{ Z^\rho_{0,(m,n)}\ge \ell\}  \le  
\P\Bigl\{\,  \overline{S^\lambda_{\ell}}-\overline{S^\rho_{\ell}}  \le  \overline{G^\lambda_{0,(m,n)}} -\overline{G^\rho_{0,(m,n)}}  -  c_3\frac{r\ell^2}{N}  \,\Bigr\} \nn\\
&\quad\le 
\P\Bigl\{\,  \overline{S^\lambda_{\ell}}-\overline{S^\rho_{\ell}}  \le   -  c_3\frac{r\ell^2}{2N}  \,\Bigr\}
+
\P\Bigl\{\,    \overline{G^\lambda_{0,(m,n)}} -\overline{G^\rho_{0,(m,n)}}  \ge  c_3\frac{r\ell^2}{2N}  \,\Bigr\} \nn\\
&\quad\le 
\frac{CN^2}{\ell^4} \Vvv[S^\lambda_{\ell}-S^\rho_{\ell}] 
+   \frac{CN^2}{\ell^4} \Vvv[G^\lambda_{0,(m,n)} -G^\rho_{0,(m,n)}]  
\nn\\
&\quad\le 
\frac{CN^2}{\ell^3}  +   \frac{CN^2}{\ell^4} \bigl(\, \Vvv[G^\lambda_{0,(m,n)} ] +\Vvv[G^\rho_{0,(m,n)}] \, \bigr) \nn\\
&\quad\le 
\frac{CN^2}{\ell^3}  +   \frac{CN^2}{\ell^4}  \bigl( \Vvv[G^\rho_{0,(m,n)}]  +   m(\lambda-\rho)\bigr) \nn\\
&\quad= 
\frac{CN^2}{\ell^3}  +   \frac{CN^2}{\ell^4}  \Bigl(   -\,\frac{m}{(1-\rho)^2}+\frac{n}{\rho^2} +\frac{2}{1-\rho}  \E[S^\rho_{Z^\rho_{0,(m,n)}}]  +  ((1-\rho)^2N+\kappa)  \cdot 
\frac{r\ell}N\,\Bigr) \nn\\
&\quad\le  
\frac{CN^2}{\ell^3}  +   \frac{CN^2}{\ell^4}  \bigl(     \E[Z^\rho_{0,(m,n)}]  +   \ell \bigr) 
\label{L82} 
\le   
\frac{CN^2}{\ell^3}  +   \frac{CN^2}{\ell^4}      \E[Z^\rho_{0,(m,n)}] . 
\end{align}

 Along the way we used the following  two inequalities.  For $\e\le\rho\le\lambda\le 1-\e/2$, 
 \[  \Vvv[G^\lambda_{0,(m,n)}] \le    \Vvv[G^\rho_{0,(m,n)}]
 + Cm(\lambda-\rho)
 \]
 holds  by the variance formula \eqref{VGr}
 \cite[Lemma 5.7]{sepp-cgm-18}.  Next,  even though  the i.i.d.\ terms $I^\rho_{ie_1}$ are positively correlated with $Z^\rho_{0,(m,n)}$,  we have the bound 
 \[  \E[S^\rho_{Z^\rho_{0,(m,n)}}] \le C\bigl(  \E[Z^\rho_{0,(m,n)}] + 1\bigr) \]
  because the terms   $I^\rho_{ie_1}$ have high  moments   
\cite[Lemma 5.8]{sepp-cgm-18}.
 
  Define a constant  $b=\ell_0+C$ 
 with $\ell_0(\e,\kappa)$   determined above \eqref{68}   and   $C(\e,\kappa)$ from line \eqref{L82}  above.     Then 
\begin{align*}
  \E[Z^\rho_{0,(m,n)}]  &= \int_0^m  \P(Z^\rho_{0,(m,n)}\ge s) \,ds 
\;  \le \; bN^{2/3} + 
 C \int_{bN^{2/3}}^\infty  \Bigl(   \frac{N^2}{s^3}  +   \frac{N^2}{s^4}      \E[Z^\rho_{0,(m,n)}] \Bigr)\,ds
\\
&= bN^{2/3} +   \frac{CN^{2/3}}{2b^2} +   \frac{C}{3b^3}    \E[Z^\rho_{0,(m,n)}]
\;  \le \;    bN^{2/3} +   \tfrac12N^{2/3}  +   \tfrac13    \E[Z^\rho_{0,(m,n)}]. 
\end{align*}  
From this we obtain the bound
$   \E[Z^\rho_{0,(m,n)}]   \le     C_1(\e,\kappa) N^{2/3} $. 
  Substituting this   back into line \eqref{L82} gives the conclusion \eqref{t11} for $\ell\ge\ell_0(\e, \kappa)$.   By increasing the constant $B(\e,\kappa)$ we can cover all $\ell\ge 1$. 
\end{proof}

We state a corollary that quantifies the effect of deviating the endpoint  from the characteristic direction.

\begin{corollary}\label{cor-lb8}    For  $0<\e<\tfrac12$ and $\kappa>0$  
     there exists a finite constant  $C(\e, \kappa)$ such that  for $m,n,N,b\ge 1,  $   
\be\label{lb-23}   \P\bigl\{Z^\rho_{0,(m,\,n+\fl{bN^{2/3}})}\ge 1\bigr\} \le C(\e, \kappa){b^{-3}}
\ee
and 
\be\label{lb-24}   \P\bigl\{Z^\rho_{0,(m,\,n- \fl{bN^{2/3}})} \le  -1\bigr\} \le C(\e, \kappa){b^{-3}}
\ee
whenever these conditions hold: $\rho\in[\e, 1-\e]$,     $\abs{(m,n)-N\xi(\rho)}_1 \le \kappa$, and in the case of \eqref{lb-24} also $n- \fl{bN^{2/3}}\ge 1$.
\end{corollary}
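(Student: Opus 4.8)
The plan is to adapt the coupling‑and‑moment argument behind Theorem~\ref{tpr6} to an endpoint sitting off the characteristic direction, the point being that such a displacement creates a favourable bias in the comparison of two stationary processes. It suffices to treat \eqref{lb-23}: reflecting the weight configuration across the main diagonal exchanges $(e_1,\rho)$ with $(e_2,1-\rho)$ and turns \eqref{lb-24} into a statement of exactly the form \eqref{lb-23} with $\rho$ replaced by $1-\rho$ and with the endpoint displaced in the $-e_1$ rather than the $+e_2$ direction; the argument below is insensitive to this change (the relevant mean term, see below, is then $+\ell(\tfrac1{1-\lambda}-\tfrac1{1-\rho})>0$ instead of $-\ell(\tfrac1\lambda-\tfrac1\rho)>0$, of the same order). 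Put $\ell=\fl{bN^{2/3}}$, so $\tfrac12 bN^{2/3}\le\ell\le bN^{2/3}$ and hence $N^2/\ell^3\le 8b^{-3}$; thus it is enough to bound $\P\{Z^\rho_{0,(m,n+\ell)}\ge 1\}$ by $C(\e,\kappa)N^2/\ell^3$.

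Pick a small $r=r(\e,\kappa)>0$ and set $\lambda=\rho+\min\{r\ell/N,\ \e/2\}$, so that $\lambda\in[\e,1-\tfrac\e2]$ is again an admissible parameter; couple the boundary families monotonically so that $I^\lambda_{ie_1}\ge I^\rho_{ie_1}$ for all $i$. As in the opening of the proof of Theorem~\ref{tpr6}, using $S^\lambda_k+G_{(k,1),(m,n+\ell)}\le G^\lambda_{0,(m,n+\ell)}$ at the exit point $k=Z^\rho_{0,(m,n+\ell)}\ge1$ and $S^\lambda_k-S^\rho_k\ge S^\lambda_1-S^\rho_1$ gives
\[
\{Z^\rho_{0,(m,n+\ell)}\ge1\}\ \subseteq\ \bigl\{\,(\overline{S^\lambda_1}-\overline{S^\rho_1})-(\overline{G^\lambda_{0,(m,n+\ell)}}-\overline{G^\rho_{0,(m,n+\ell)}})\ \le\ -\Delta\,\bigr\},
\]
where $\Delta=\E[S^\lambda_1-S^\rho_1]-\E[G^\lambda_{0,(m,n+\ell)}-G^\rho_{0,(m,n+\ell)}]$. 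The only genuinely new computation is $\Delta$: repeating the algebra of \eqref{65}--\eqref{66} with $n$ replaced by $n+\ell$ reproduces all the terms there plus one extra contribution from the displaced endpoint, namely $-\ell(\tfrac1\lambda-\tfrac1\rho)=(\lambda-\rho)\ell/(\lambda\rho)>0$. Hence, after choosing $r$ small (and using $\ell\ge\tfrac12 N^{2/3}$ to swallow the $\kappa$‑dependent terms), one obtains $\Delta\ge c(\e,\kappa)\,r\ell^2/N$ when $\ell\le\e N/(2r)$ (where $\lambda=\rho+r\ell/N$), and $\Delta\ge c(\e,\kappa)\,\ell$ when $\ell>\e N/(2r)$ (where $\lambda=\rho+\tfrac\e2$ and the new term dominates the positive term $m(\tfrac1{1-\lambda}-\tfrac1{1-\rho})$).

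In the first regime I finish by Chebyshev, splitting the displayed event into $\{\overline{S^\lambda_1}-\overline{S^\rho_1}\le-\Delta/2\}$ and $\{\overline{G^\lambda_{0,(m,n+\ell)}}-\overline{G^\rho_{0,(m,n+\ell)}}\ge\Delta/2\}$. Since $\Vvv[S^\lambda_1-S^\rho_1]=\bigl((\lambda-\rho)/((1-\lambda)(1-\rho))\bigr)^2$ is of order $(r\ell/N)^2$, the first piece is $4\Vvv[S^\lambda_1-S^\rho_1]/\Delta^2=O(1/\ell^2)=O(N^2/\ell^3)$. For the second I need $\Vvv[G^\lambda_{0,(m,n+\ell)}-G^\rho_{0,(m,n+\ell)}]\le C(\e,\kappa)\ell$; I extract this from the variance identity \eqref{VGr}, the moment bound $\E[S^\rho_{(Z)^+}]\le C(\E[(Z)^+]+1)$, and the variance comparison $\Vvv[G^\lambda_{0,\cdot}]\le\Vvv[G^\rho_{0,\cdot}]+Cm(\lambda-\rho)$ of \cite{sepp-cgm-18} (both already used in the proof of Theorem~\ref{tpr6}), together with the monotonicity of the exit point in the endpoint: pushing the endpoint up can only decrease $(Z^\rho)^+$, so $\E[(Z^\rho_{0,(m,n+\ell)})^+]\le\E[\abs{Z^\rho_{0,(m,n)}}]\le C_1(\e,\kappa)N^{2/3}\le C_1\ell$. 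The first two terms of \eqref{VGr} for $(m,n+\ell)$ are then $\tfrac{\ell}{\rho^2}+O(\kappa)$ and the last is $O(\ell)$, whence $\Vvv[G^\lambda_{0,(m,n+\ell)}-G^\rho_{0,(m,n+\ell)}]\le 2\Vvv[G^\lambda_{0,(m,n+\ell)}]+2\Vvv[G^\rho_{0,(m,n+\ell)}]\le C\ell$, and Chebyshev bounds the second piece by $4\Vvv[G^\lambda_{0,(m,n+\ell)}-G^\rho_{0,(m,n+\ell)}]/\Delta^2\le C(\e,\kappa)N^2/\ell^3$. In the second regime ($\ell>\e N/(2r)$, i.e.\ $b$ of order at least $N^{1/3}$) a soft estimate suffices: $\{\overline{S^\lambda_1}-\overline{S^\rho_1}\le-\Delta/2\}$ is empty, because $\overline{S^\lambda_1}-\overline{S^\rho_1}\ge-\E[S^\lambda_1-S^\rho_1]\ge-1/\e$ while $\Delta/2\ge c(\e,\kappa)\ell/2>1/\e$ once $r$ is small, so the event reduces to $\{G^\lambda_{0,(m,n+\ell)}-G^\rho_{0,(m,n+\ell)}\ge\E[\,\cdot\,]+\Delta/2\}$; writing $G^\alpha_{0,(m,n+\ell)}=\sum_{i=1}^m I^\alpha_{ie_1}+\sum_{j=1}^{n+\ell}J^\alpha_{(m,j)}$, each sum being a sum of i.i.d.\ exponentials with rates in a compact subinterval of $(0,1)$ depending on $\e$ (Theorem~\ref{th:st-lpp}(ii)), a standard Chernoff bound applied to each sum separately gives this probability at most $2e^{-c(\e)\ell}\le C(\e)\ell^{-3}\le C(\e)N^2/\ell^3$. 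Combining the two regimes proves \eqref{lb-23}, hence the corollary (small $N$ being absorbed into the constant).

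The step I expect to be the main obstacle is the bound $\Vvv[G^\lambda_{0,(m,n+\ell)}-G^\rho_{0,(m,n+\ell)}]\le C(\e,\kappa)\ell$ for an endpoint off the characteristic direction: unlike the near‑characteristic case the variance now grows linearly in $\ell$, so it has to be teased out of the variance identity \eqref{VGr} by controlling the positive part of the exit point, which in turn forces one to invoke monotonicity of the exit point in the endpoint and the $N^{2/3}$‑bound behind Theorem~\ref{tpr6}. The only other point requiring care is the bookkeeping of the two regimes, arranged so that the coupling parameter $\lambda$ remains uniformly inside $(0,1)$.
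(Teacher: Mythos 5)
Your route is genuinely different from the paper's, and far more laborious. The paper disposes of \eqref{lb-23} in a few lines: it defines a new scaling parameter $M$ by $M\xi_2(\rho)=n+bN^{2/3}$, observes that $\fl{M\xi_1(\rho)}\ge m+\fl{b\e^2N^{2/3}}-\kappa\e^{-2}$, and then uses the shift Lemma~\ref{lm:shift} to convert the event $\{Z^\rho_{0,(m,n+\fl{bN^{2/3}})}\ge1\}$ into a tail event $\{Z^\rho_{0,(\fl{M\xi_1},\fl{M\xi_2})}\ge\tfrac12 b\e^2N^{2/3}\}$ for an endpoint \emph{on} the $\rho$-characteristic ray at scaling $M$, to which Theorem~\ref{tpr6} applies directly. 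Your proposal instead re-runs the whole coupling/Chebyshev machinery of Theorem~\ref{tpr6} from scratch with the displaced endpoint $(m,n+\ell)$. The observation that the $e_2$-displacement contributes the favourable term $-\ell\bigl(\tfrac1\lambda-\tfrac1\rho\bigr)=\tfrac{(\lambda-\rho)\ell}{\lambda\rho}>0$ to $\Delta$, so that for $r$ small one can take the exit-point cutoff to be $1$, is correct and is the right reason the estimate should hold; the two-regime split and the reflection reduction of \eqref{lb-24} to \eqref{lb-23} are also fine. But your route trades the black-box use of Theorem~\ref{tpr6} and a deterministic shifting lemma for a fresh re-verification of several auxiliary estimates in the off-characteristic regime, and this is where a genuine gap appears.

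The weak link is your variance bound $\Vvv[G^\lambda_{0,(m,n+\ell)}-G^\rho_{0,(m,n+\ell)}]\le C(\e,\kappa)\ell$. You obtain $\Vvv[G^\rho_{0,(m,n+\ell)}]\le C\ell$ correctly via \eqref{VGr}, the monotonicity $(Z^\rho_{0,(m,n+\ell)})^+\le(Z^\rho_{0,(m,n)})^+$, and Theorem~\ref{tpr6}. But to handle $\Vvv[G^\lambda_{0,(m,n+\ell)}]$ you invoke the comparison $\Vvv[G^\lambda_{0,\cdot}]\le\Vvv[G^\rho_{0,\cdot}]+Cm(\lambda-\rho)$ of \cite{sepp-cgm-18}. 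That inequality, as used in the paper (after line \eqref{L82}), is applied for $(m,n)$ within $O(\kappa)$ of $N\xi(\rho)$; you need it for $(m,n+\ell)$, which sits order $\ell$ off the characteristic rays of both $\rho$ and $\lambda$. Nothing in the variance identity \eqref{VGr} immediately yields the bound there: the troublesome term is the covariance $\Cvv[S^\rho_m,\sum_i I^\rho_{(i,n+\ell)}]$, and the elementary Cauchy--Schwarz estimate of it is $O(m)$ rather than $O(m(\lambda-\rho))$, so for $\lambda-\rho$ of order $r\ell/N$ this is not a priori small enough. You could close the gap without this lemma by bounding $\E\bigl[(Z^\lambda_{0,(m,n+\ell)})^+\bigr]$ directly: choose $n'$ with $(m,n')$ on the $\lambda$-characteristic ray (at its own scaling $M'\approx m/\xi_1(\lambda)$), check that $n'\le n+\ell$ once $r$ is small, then use monotonicity of $(Z^\lambda)^+$ in the endpoint together with Theorem~\ref{tpr6} at scaling $M'$ to get $\E\bigl[(Z^\lambda_{0,(m,n+\ell)})^+\bigr]\le C(M')^{2/3}\le C\ell$ and feed that into \eqref{VGr}. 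But as written, the invocation of the variance comparison in the off-characteristic regime is unjustified.

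A secondary remark: the ``second regime'' $\ell>\e N/(2r)$ where you cap $\lambda=\rho+\e/2$ and use Chernoff is not something the paper needs, because the shift-and-Theorem-\ref{tpr6} route sidesteps it. Your Chernoff step is plausible (both families $\{I^\alpha_{ie_1}\}$ and $\{J^\alpha_{(m,j)}\}$ are i.i.d.\ exponentials and independent by Theorem~\ref{th:st-lpp}(ii), and $\Delta\gtrsim\ell\gtrsim m\vee n$ there), but it is extra machinery compared with the two-line shifting argument, and it is an indication that you are re-proving rather than reducing.
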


\begin{proof}    For \eqref{lb-23}   introduce another scaling parameter $M$ and a constant $d$  via
\[ M\xi_2(\rho)= n+bN^{2/3} \quad\text{and}\quad d=b\bigl(\tfrac{1-\rho}\rho\bigr)^2 \ge b\e^2. \] 
Then   $\fl{M\xi_2(\rho)}= n+\fl{bN^{2/3}}$ while  
\begin{align*}
M\xi_1(\rho)   = \frac{n(1-\rho)^2}{\rho^2} + d N^{2/3}  = m + d N^{2/3} + \frac{n (1-\rho)^2-m\rho^2}{\rho^2} , 
\end{align*}
from which follows 
\[    \fl{M\xi_1(\rho)} \ge  m + \fl{b\e^2 N^{2/3}}   - \kappa \e^{-2}.  \] 

By  the shifting  Lemma \ref{lm:shift}  in Appendix \ref{app:lpp}, 
\begin{align*}
 &\P\{Z^\rho_{0,(m,\,n+\fl{bN^{2/3}})}\ge 1\} 
 \le  \P\bigl\{Z^\rho_{0,(\fl{M\xi_1(\rho)}\,,  \,\fl{M\xi_2(\rho)}  )}\ge b\e^{2}N^{2/3} -  \kappa \e^{-2} \bigr\}  \\[4pt] 
&\qquad\qquad
 \le  \P\bigl\{Z^\rho_{0,(\fl{M\xi_1(\rho)}\,,  \,\fl{M\xi_2(\rho)}  )}\ge \tfrac12b\e^2N^{2/3}  \bigr\}  
\le {C(\e)}{b^{-3}}. 
\end{align*}
In the second-last inequality we assumed  $b\ge 2\kappa\e^{-4}$  which entails no loss of generality because we can adjust $C(\e,\kappa)$.   The last inequality is from   the upper bound \eqref{t8}. 

\medskip 

For bound \eqref{lb-24} apply again 
  Lemma \ref{lm:shift}  in Appendix \ref{app:lpp} and then   the upper bound \eqref{t8}: 
\[ 
 \P\{Z^\rho_{0,(m,\,n- \fl{bN^{2/3}})} \le  -1\} \le  \P \{Z^\rho_{0,(m,n)} \le  - bN^{2/3}\} \le {C(\e, \kappa)}{b^{-3}}.   
\qedhere \] 
\end{proof}

For directions $\xi=(\xi_1,\xi_2)\in\,]e_2, e_1[$, $x$-coordinates $m\in \Z$,  and $t>0$ define
\begin{align}
	\mathcal{C}_{m,t}^\xi= \{m\}\times \bigl\{ y\in\Z:\abs{m{\xi_2}/{\xi_1}-y}\leq tN^\frac{2}{3}\bigr\}.
\end{align}
$\cC_{m,t}^\xi$ is the  vertical  line segment of length $2tN^{2/3}$ centered on the $\xi$-directed ray  at point   $(m, m{\xi_2}/{\xi_1})$.     Recall that   $\pi^{0,p}$ denotes  the unique  geodesic  of  $G_{0,p}$ that uses i.i.d.~Exp(1) weights. 
The  next lemma shows that for large $r$  the geodesic $\pi^{0,\fl{\xi N}}$ is very  likely to intersect $\cC_{m,t}^\xi$.

\begin{figure}
\includegraphics[width=9.5cm]{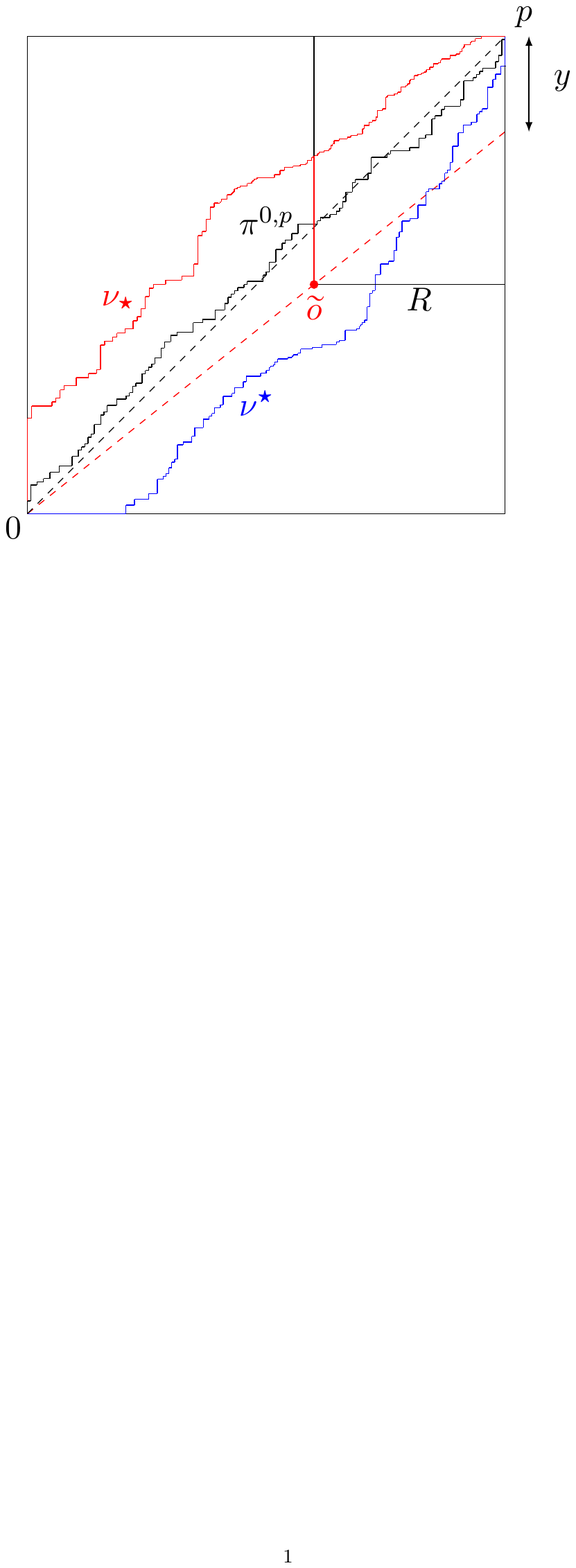} 
\vspace{-10pt} 
\caption{\small Illustration of the proof of Lemma \ref{lm:geodev}.   On the event $\{Z^{\rho_\star}_{0,p}<0, Z^{\rho^\star}_{0,p}>0\}$, geodesic $\nu_\star$ exits off the $y$-axis and $\nu^\star$ off the $x$-axis.   Dashed straight lines:  $[0,p]$ is the ray in direction $\xi$,  $[0,\wt o]$ in direction $\xi_\star$.   With high probability the geodesics 
	$\nu_\star$ and $\nu^\star$ 
	 sandwich the geodesic $\pi^{0,p}$, while not wandering too far from the $\xi$-directed ray.}
\label{fig:geod3} 	 
\end{figure}

\begin{lemma}\label{lm:geodev}
	For  $0<\delta,\e<\frac{1}{3}$, there exists a finite constant $C=C(\delta, \e)$   such that the following holds  for all $N\ge 1$ and $1\le r<  \frac{\sqrt\e}{2(1+\sqrt\e)}N^{1/3}$:   for any direction $\xi=(\xi_1, 1-\xi_1)\in\,]e_2, e_1[$ such that $\xi_1\in\bigl[\frac{\e}{1+\e}, \frac{1}{1+\e}\bigr]$ and any  $i\in \lzb\delta N\xi_1 ,(1-\delta) N\xi_1\rzb$, 
	\be\label{dev76}
	\P\bigl(\pi^{0,\fl{N\xi}}\cap \mathcal{C}^{\xi}_{i,r}= \varnothing\bigr)\leq Cr^{-3}.
	\ee
\end{lemma}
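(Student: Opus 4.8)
The plan is to trap the i.i.d.\ geodesic $\pi^{0,p}$, $p=\fl{N\xi}$, between two increment-stationary geodesics whose parameters are perturbed off $\rho=\rho(\xi)$ by order $rN^{-1/3}$, and to use the exit-point estimates of Section \ref{s:bound} to show that on an event of probability $\ge1-Cr^{-3}$ these two geodesics squeeze $\pi^{0,p}$ to within $O(rN^{2/3})$ of the $\xi$-directed ray at column $i$. Concretely, set $\rho_\star=\rho-rN^{-1/3}$ and $\rho^\star=\rho+rN^{-1/3}$; the hypotheses $\xi_1\in[\tfrac\e{1+\e},\tfrac1{1+\e}]$ and $r<\tfrac{\sqrt\e}{2(1+\sqrt\e)}N^{1/3}$ were chosen precisely so that, by \eqref{xi-rho}, $\rho\in\bigl[\tfrac{\sqrt\e}{1+\sqrt\e},\tfrac1{1+\sqrt\e}\bigr]$, hence $\rho_\star,\rho^\star$ lie in the compact subinterval $\bigl[\tfrac{\sqrt\e}{2(1+\sqrt\e)},1-\tfrac{\sqrt\e}{2(1+\sqrt\e)}\bigr]\subset(0,1)$, so Theorem \ref{tpr6} and Corollary \ref{cor-lb8} apply with constants depending only on $\e$. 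Because $\rho\mapsto\xi(\rho)$ is a diffeomorphism with nonvanishing derivative on that interval, the characteristic direction $\xi_\star=\xi(\rho_\star)$ is flatter than $\xi$, $\xi^\star=\xi(\rho^\star)$ is steeper, $\abs{N\xi_\star-N\xi}_1$ and $\abs{N\xi^\star-N\xi}_1$ are of order $rN^{2/3}$, and for $i\in\lzb\delta N\xi_1,(1-\delta)N\xi_1\rzb$ one has $i(\xi_\star)_2/(\xi_\star)_1=i\xi_2/\xi_1-\Theta(rN^{2/3})$ and $i(\xi^\star)_2/(\xi^\star)_1=i\xi_2/\xi_1+\Theta(rN^{2/3})$. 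Realize $G^{\rho_\star}_{0,\bbullet}$ and $G^{\rho^\star}_{0,\bbullet}$ of Theorem \ref{th:st-lpp} on one probability space sharing the bulk weights $\w$ (independent boundary weights are enough), and let $\nu_\star,\nu^\star$ be their geodesics from $0$ to $p$.

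\emph{The good event.} First, $p$ is the $\xi_\star$-characteristic point of a rescaled parameter $M_\star$ of order $N$ (take $M_\star=p_1/(\xi_\star)_1$) shifted by $\fl{bM_\star^{2/3}}$ in the $e_2$-direction with $b$ of order $r$; so Corollary \ref{cor-lb8} gives $\P\{Z^{\rho_\star}_{0,p}\ge1\}\le Cr^{-3}$, and symmetrically $\P\{Z^{\rho^\star}_{0,p}\le-1\}\le Cr^{-3}$. Hence with probability $\ge1-Cr^{-3}$ the geodesic $\nu_\star$ exits the $y$-axis and $\nu^\star$ exits the $x$-axis. Second---the technical heart---one shows that with probability $\ge1-Cr^{-3}$ the geodesic $\nu_\star$ crosses column $i$ below height $i\xi_2/\xi_1+C_1rN^{2/3}$ and $\nu^\star$ crosses it above $i\xi_2/\xi_1-C_1rN^{2/3}$. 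This is reduced to an exit-point statement: using the regeneration of the stationary process along column $i$ together with the deterministic monotonicities of geodesics and of exit points in the endpoint (Lemma \ref{lm:shift}), the passage of $\nu_\star$ through column $i$ is controlled by the same coupling method that proves Theorem \ref{tpr6} and Corollary \ref{cor-lb8}, run at a scale of order $N$, giving deviation probability $Cr^{-3}$; the constraint $i\in\lzb\delta N\xi_1,(1-\delta)N\xi_1\rzb$ and the bound on $r$ keep all auxiliary endpoints inside the open quadrant. Let $E$ be the intersection of these events, so $\P(E)\ge1-Cr^{-3}$.

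\emph{Sandwich and conclusion.} On $E$, the portion of $\nu_\star$ beyond its $y$-axis exit and the portion of $\nu^\star$ beyond its $x$-axis exit are i.i.d.\ geodesics ending at $p$. By a.s.\ uniqueness of i.i.d.\ geodesics and planarity---the standard comparison of a bulk geodesic with stationary geodesics exiting opposite axes---$\pi^{0,p}$ is pinched between them: $\nu^\star\le\pi^{0,p}\le\nu_\star$ as up-right paths. Consequently, on $E$ the crossing of column $i$ by $\pi^{0,p}$ lies between those of $\nu^\star$ and $\nu_\star$, hence within $C_1rN^{2/3}$ of the point $(i,i\xi_2/\xi_1)$, which means $\pi^{0,\fl{N\xi}}\cap\mathcal{C}^{\xi}_{i,C_1r}\ne\varnothing$. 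Therefore $\P\bigl(\pi^{0,\fl{N\xi}}\cap\mathcal{C}^{\xi}_{i,C_1r}=\varnothing\bigr)\le Cr^{-3}$, and replacing $r$ by $r/C_1$ and enlarging the constant turns this into \eqref{dev76}.

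\emph{Main obstacle.} The crux is the second half of the good event: bounding, uniformly in $\xi$ in the allowed range, in $i$, and in $r$, the transversal deviation of a stationary geodesic from its characteristic ray at an \emph{interior} column by $O(rN^{2/3})$ with probability $1-O(r^{-3})$. Unlike the exit point this is not directly an endpoint quantity, so the proof must realize it as one---via the column-$i$ regeneration of the stationary process and careful geodesic-ordering bookkeeping---while tracking that the deterministic $\Theta(N)$ terms cancel so only the $\Theta(rN^{2/3})$ window remains, and checking that the sandwich of the previous step genuinely survives the perturbation of the parameters.
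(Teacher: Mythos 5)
Your proposal tracks the paper's proof closely: perturb $\rho$ to $\rho_\star,\rho^\star$, use Corollary~\ref{cor-lb8} to force $\nu_\star$ off the $y$-axis and $\nu^\star$ off the $x$-axis, restart the stationary process at a point $\wt o$ on the perturbed characteristic ray at the given column (Lemma~\ref{app-lm1}) so that the transversal deviation there becomes an exit-point quantity governed by Theorem~\ref{tpr6}/Corollary~\ref{cor-lb8}, and conclude by the sandwich. That is exactly the paper's route, and the one-sided bounds you state are all that is used.

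One caution: the parenthetical ``independent boundary weights are enough'' risks a genuine misconception. It is true that no coupling is needed \emph{between} the $\rho_\star$- and $\rho^\star$-boundary sequences, but the sandwich step itself does rely on a coupling of each boundary sequence with the bulk weights on the axes, namely $\w_{ie_1}\le I^{\rho_\star}_{ie_1}\wedge I^{\rho^\star}_{ie_1}$ and $\w_{je_2}\le J^{\rho_\star}_{je_2}\wedge J^{\rho^\star}_{je_2}$ (the paper's~\eqref{wIJ}). Without this domination, the argument ``$Z^{\rho^\star}_{0,p}>0$ forces $\pi^{0,p}$ to stay weakly left of $\nu^\star$'' breaks: when the two paths separate at a point $z$ on the $x$-axis, one needs the stationary weight of $\pi_{[k+1,n]}$ along the axis to be at least its bulk weight to derive the contradiction with $\nu^\star$ being optimal for $G^{\rho^\star}$. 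Stating this coupling explicitly would close the only real gap in an otherwise faithful outline.
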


	\begin{proof}  Abbreviate  $p=(p_1,p_2)=\fl{\xi N}$.
	The  proof  shows that  with high probability $\pi^{0,p}$ is   captured  between two geodesics  of stationary LPP processes, and then controls  the probability that these geodesics  deviate from  the  $\xi$-ray.   Figure \ref{fig:geod3} illustrates the proof. 
	
	Take  $\rhoup{\rho}=\rho(\xi)+rN^{-\frac{1}{3}}$ and  $\rhodown{\rho}=\rho(\xi)-rN^{-\frac{1}{3}}$ with characteristic directions  $\xiup{\xi}{}=\xi(\rhoup{\rho})$ and   $\xidown{\xi}{}=\xi(\rhodown{\rho})$.   The upper bound on $r$ guarantees that 
	$\rhoup{\rho}, \rhodown{\rho}\in[\e',1-\e']$.    Let $\geoup{\nu}$ be the geodesic of   $\Gpp^{\rhoup{\rho}}_{0,p}$  and  $\geodown{\nu}$  the geodesic of   $\Gpp^{\rhodown{\rho}}_{0,p}$.  
	We couple the weights of the three LPP processes as follows.  The bulk weights $\{\w_x\}_{x\,\in\, \Z_{>0}^2}$ are the same for each LPP process.  On the axes we couple so that, for $i,j \ge 1$, 
	\be\label{wIJ}
	\w_{ie_1}\le I^{\rhodown{\rho}}_{ie_1} \wedge I^{\rhoup{\rho}}_{ie_1}
	\quad\text{and}\quad 
	\w_{je_2}\le J^{\rhodown{\rho}}_{je_2} \wedge J^{\rhoup{\rho}}_{je_2}.  
	\ee
	
	We develop estimates to control the location of $\geodown{\nu}$. Similar reasoning applies to $\geoup{\nu}$.
	The mean value theorem applied  to the function $\xi_2/\xi_1=(\frac\rho{1-\rho})^2$ shows that  there exist constants  $C_1(\e,\delta),C_2(\e,\delta)>0$ such that
	\begin{align}\label{SC}
			C_1rN^\frac{2}{3}\leq \biggl(\frac{\xi_2}{\xi_1}-\frac{\xidown{\xi}{2}}{\xidown{\xi}{1}}\biggr)i\leq C_2rN^\frac{2}{3}\quad\text{for } \  i\in \lzb\delta N,N\rzb. 
	\end{align}
	 
	Given  $\alpha\in [\delta\xi_1,(1-\delta)\xi_1]$ define the point $\wt o=(\,\fl{\alpha N}\,,\fl{\alpha N{\xidown{\xi}{2}}/{\xidown{\xi}{1}}}\,)$ on the   $\xidown{\xi}{}$-ray.  Let $G^{\rhodown{\rho},[0]}_{\wt o,\,p}$ be the stationary LPP process  on the rectangle $R=\lzb \wt o,p\rzb$ with boundary weights   on the south and west sides given for $i,j\ge 1$  by 
	\[   I^{\rhodown{\rho},[0]}_{\wt o+ie_1}= \Gpp^{\rhodown{\rho}}_{0,\wt o+ie_1}
	- \Gpp^{\rhodown{\rho}}_{0,\wt o+(i-1)e_1}
	\quad\text{and}\quad 
	J^{\rhodown{\rho},[0]}_{\wt o+je_2}= \Gpp^{\rhodown{\rho}}_{0,\wt o+je_2}
	- \Gpp^{\rhodown{\rho}}_{0,\wt o+(j-1)e_2}.
	\]
	Superscript $[0]$ indicates that the boundary weights come from $\Gpp^{\rhodown{\rho}}_{0,\bbullet}$.  
	By Lemma \ref{app-lm1},  the crossing point of the geodesic $\geodown{\nu}$ through the south and west boundary of $R$ is the exit point  
	 of the geodesic of $G^{\rhodown{\rho},[0]}_{\wt o,\,p}$ from that boundary. By \eqref{SC}
	\begin{align*}
		\big\{\geodown{\nu}\cap\mathcal{C}^\xi_{\fl{\alpha N},\,2C_2r}= \varnothing\big\}&\subset \big\{\geodown{\nu} \cap \lzb \wt o,\wt o+2C_2rN^\frac{2}{3}e_2\rzb=\varnothing\big\}\\
		&\subset\big\{Z^{\rhodown{\rho},\,[0]}_{\wt o,p}\notin\lzb-2C_2rN^\frac{2}{3},0\rzb\big\}. 
	\end{align*}
	From this, 
	\be\label{nu in}\begin{aligned}
		\P\bigl(\geodown{\nu}\cap\mathcal{C}^\xi_{\fl{\alpha N},\,2C_2r}= \varnothing\bigr)\leq \P\bigl(Z^{\rhodown{\rho}}_{\wt o,p}\notin\lzb-2C_2rN^\frac{2}{3},0\rzb\bigr)\\
		=\P\bigl(Z^{\rhodown{\rho}}_{\wt o,p}>0\bigr)+\P\bigl(Z^{\rhodown{\rho}}_{\wt o,p}<-2C_2rN^\frac{2}{3}\bigr). 
	\end{aligned}\ee
	(The superscript $[0]$ can be dropped from $Z^{\rhodown{\rho}, [0]}_{\wt o,p}$ in  probability statements because it makes no difference to the distribution.) 
	We show that the last two probabilities are small.  Let
	\begin{align*}
		y=p_2-\frac{\xidown{\xi}{2}}{\xidown{\xi}{1}}N\xi_1   
	\end{align*}
	be the vertical distance between the rays  $\xi$ and $\xidown{\xi}{}$ along the east boundary of $R$. By \eqref{SC}, 
	\begin{align*}
		C_1rN^\frac{2}{3}\leq y\leq C_2rN^\frac{2}{3}.
	\end{align*}
	Since  $p-ye_2-\wt o$ points  in the characteristic direction of   $\rhodown{\rho}$, the bounds below follow from   \eqref{lb-23}  and  \eqref{t8}  for a constant $C=C(\e,\delta)$, uniformly for $\xi_1\in\bigl[\frac{\e}{1+\e}, \frac{1}{1+\e}\bigr]$ and $\alpha\in [\delta\xi_1,(1-\delta)\xi_1]$: 
		\[  \P(Z^{\rhodown{\rho}}_{\wt o,p}>0)\leq Cr^{-3} \]
		and (with first an application of  Lemma \ref{lm:shift}), 
		\begin{align*}
		\P(Z^{\rhodown{\rho}}_{\wt o,p}<-2C_2rN^\frac{2}{3})
		&=\P\bigl(Z^{\rhodown{\rho}}_{\wt o,p-\fl ye_2}<-2C_2rN^\frac{2}{3}+\fl y\,\bigr)\\ &\leq\P(Z^{\rhodown{\rho}}_{\wt o,p-\fl ye_2}<-C_2rN^\frac{2}{3})\leq Cr^{-3}.		
	\end{align*}
	
	Substituting this into \eqref{nu in} gives  a constant $C(\e,\delta)$ independent of $\xi$,  $\alpha$ such that
	\begin{align*}
	\P(\geodown{\nu}\cap\mathcal{C}^\xi_{\fl{\alpha N},\,2C_2r}= \varnothing)\leq Cr^{-3}.
	\end{align*}
	Similarly one shows that
	\begin{align*}
		\P(\geoup{\nu}\cap\mathcal{C}^\xi_{\fl{\alpha N},\,2C_2r}= \varnothing)\leq Cr^{-3}.
	\end{align*}
	Combining the bounds above  with   Corollary \ref{cor-lb8}  gives the next estimate, still with a  constant $C(\e,\delta)$ independent of $\xi, \alpha$:  
	\[  
		\P\bigl\{Z^{\rhodown{\rho}}_{0,p}<0, \, Z^{\rhoup{\rho}}_{0,p}>0,\,\geoup{\nu}\cap\mathcal{C}^\xi_{\fl{\alpha N},\,2C_2r}\neq \varnothing,\,\geodown{\nu}\cap\mathcal{C}^\xi_{\fl{\alpha N},\,2C_2r}\neq \varnothing\bigr\}\geq 1-Cr^{-3}. 
	\]  
	
	The proof of the lemma is complete once we show that the event above implies the complement of \eqref{dev76}, namely, that 
	\be\label{cont} \begin{aligned}
		&\big\{Z^{\rhodown{\rho}}_{0,p}<0, \, Z^{\rhoup{\rho}}_{0,p}>0, \, \geoup{\nu}\cap\mathcal{C}^\xi_{\fl{\alpha N},\,2C_2r}\ne \varnothing, \, \geodown{\nu}\cap\mathcal{C}^\xi_{\fl{\alpha N},\,2C_2r}\ne \varnothing\big\}\\
		&\qquad 
		\subset\big\{\pi^{0,p}\cap \mathcal{C}^{\xi}_{\alpha N,\,2C_2r}\ne  \varnothing\big\}.
	\end{aligned}\ee

The inclusion \eqref{cont}   holds because conditions $Z^{\rhodown{\rho}}_{0,p}<0, \, Z^{\rhoup{\rho}}_{0,p}>0$ imply that the geodesic  $\pi^{0,p}$ runs between geodesics $\geodown{\nu}$ and   $\geoup{\nu}$, with $\geodown{\nu}$ above $\pi^{0,p}$  and   $\geoup{\nu}$ below and to the right of $\pi^{0,p}$.   This is where the coupling \eqref{wIJ} comes in. 
We argue one of the two cases, namely 
\be\label{Z-pi1} \begin{aligned} 
\text{$Z^{\rhoup{\rho}}_{0,p}>0$ implies that $\pi^{0,p}$ never goes strictly  to the right of  $\geoup{\nu}$.} 
\end{aligned} \ee
Let $n=\abs{p}_1$ so that the geodesics end at  $\pi^{0,p}_n=\geoup{\nu}_n=p$. 
Suppose  claim \eqref{Z-pi1} fails, so that at some index $k$,   $\pi^{0,p}_k=\geoup{\nu}_k=z$  but   $\geoup{\nu}_{k+1}=z+e_2$   while $\pi^{0,p}_{k+1}=z+e_1$. 
$Z^{\rhoup{\rho}}_{0,p}>0$ implies that $k\ge 1$ and $z+e_2$ lies in the bulk $\Z_{>0}$.   Since $\pi_{[k+1, n]}$ did not follow the bulk path  $\geoup{\nu}_{[k+1,n]}$,  the bulk weight of $\pi_{[k+1, n]}$ must be  strictly larger than that of $\geoup{\nu}_{[k+1,n]}$.  But now the first inequality of \eqref{wIJ} guarantees that path segment $\geoup{\nu}_{[k+1,n]}$ is inferior to  $\pi_{[k+1, n]}$ also for the stationary LPP value $\Gpp^{\rhoup{\rho}}_{0,p}$. Thus the separation did not happen. 
\end{proof}

\section{No bi-infinite geodesic away from the axes} \label{s:body}

This section proves Theorem \ref{th:ub77}. 
 Recall the southwest boundary part 
 $\partial^N  =(\{-N\}\times \lzb -N,-\e N \rzb\,)\cup(\lzb -N,-\e N\rzb \times \{-N\})$  from \eqref{sw}. 
The parameter $\e>0$  stays fixed now  and hence will be suppressed from some notation.  
As in \eqref{xi-rho}, a point $o=(o_1, o_2)\in \partial^N$ is associated    with its direction  vector $ \xi(o)=(\xi_1(o), 1-\xi_1(o))\in\,]e_2, e_1[$ and rate parameter  $\rho(o)\in (0,1)$   through the relations
\begin{align}
\xi(o)&=  \left(\frac{o_1}{o_1+o_2},\,\frac{o_2}{o_1+o_2}\right)
=\left(\frac{(1-\rho(o))^2}{(1-\rho(o))^2+\rho(o)^2}\,,\frac{\rho(o)^2}{(1-\rho(o))^2+\rho(o)^2}\right)\label{xi-o} \\ 
\intertext{and} 
\rho(o)&=\frac{\sqrt{\abs{o_2}}}{\sqrt{\abs{o_1}}+\sqrt{\abs{o_2}}}
=\frac{\sqrt{1-\xi_1(o)}}{\sqrt{\xi_1(o)}+\sqrt{1-\xi_1(o)}}. 
\label{rho-o}
\end{align}
 For all $o\in \partial^N$ we have the bounds 
\[   \xi(o)\in \Bigl[\Bigl(\frac{\e}{1+\e},\frac{1}{1+\e}\Bigr),\Bigl(\frac{1}{1+\e},\frac{\e}{1+\e}\Bigr)\Bigr] 
\quad\text{and}\quad 
\frac{\sqrt\e}{1+\sqrt\e} \le \rho(o)\le \frac{1}{1+\sqrt\e}. 
\]

The proof uses LPP values from points of $\partial^N$ to the vertical segment 
  $\cI=\{0\}\times\lzb -N^\frac{2}{3},N^\frac{2}{3} \rzb$. This latter is indexed by   $I=\lzb-N^\frac{2}{3},N^\frac{2}{3}\rzb$. For  $o\in \partial^N$, let 
  \[  \rhodown{\rho}(o)=\rho(o)-rN^{-\frac{1}{3}}\quad\text{ and }\quad 
  \rhoup{\rho}(o)=\rho(o)+rN^{-\frac{1}{3}}\]
   and consider the stationary LPP processes  $G^{\rhoup{\rho}}_{o,\bbullet}$ and $G^{\rhodown{\rho}}_{o,\bbullet}$ based at $o$.  
   The  next lemma shows that a  large enough  $r$ forces the exit point of $G^{\rhoup{\rho}}_{o,x}$  to the $x$-axis  and that of $G^{\rhodown{\rho}}_{o,x}$ to the $y$-axis, arbitrarily far on the $N^{2/3}$ scale, with a  probability bound that is uniform  over   $o\in \partial^N$ and $x\in \cI$.    See Figure \ref{fig:geod1} for an illustration.  
   
   \begin{figure} 
 \includegraphics[width=8cm]{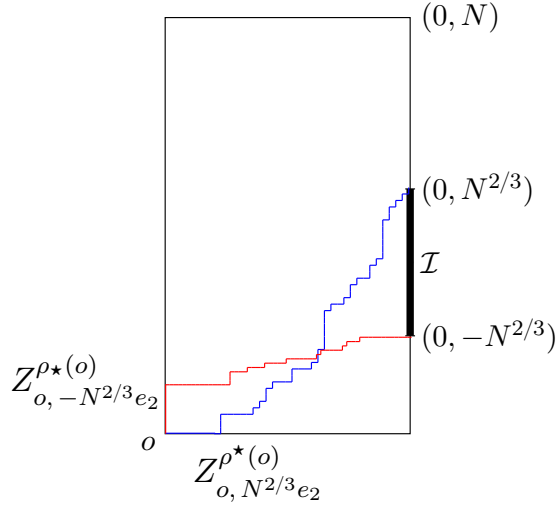} 
 \caption{\small   Lemma \ref{lem-lb1}. For large $r$,   the exit point  $Z^{\rho^\star(o)}_{o,  \fl{N^{2/3}}e_2}$  is far to the right from $o$ on the scale $N^{2/3}$. By the uniqueness of finite geodesics,  the same holds for $Z^{\rho^\star(o)}_{o, x}$ for  all $x\in\cI$. Similarly for exit points $Z^{\rho_\star(o)}_{o,x}$ above $o$.}
 \label{fig:geod1} 
  \end{figure}

	\begin{lemma}\label{lem-lb1}   For $0<\e<1$ there exist  finite positive constants  $C_0(\e), C_1(\e)$  such that, whenever  $d$ and $r$ satisfy 
\be\label{lb-0} 
1\le d\le \tfrac12\e N^{1/3}\quad \text{and}\quad   
C_0(\e)d \le r \le\frac{\sqrt\e}{2(1+\sqrt\e\,)}N^{1/3} ,
\ee
  the following bounds hold for all $N\ge 1$,    $x\in \cI$,  and $o\in \partial^N$: 
		\be\label{lb-1}   \P\bigl\{ Z^{\rhodown{\rho}(o)}_{o,x}\ge -dN^{{2}/{3}}\bigr\} \le  {C_1(\e)}{r^{-3}}
		\ee
		and 
		\be\label{lb-2}   \P\bigl\{ Z^{\rhoup{\rho}(o)}_{o,x}\le  dN^{{2}/{3}}\bigr\}  \le {C_1(\e)}{r^{-3}},
		\ee
where  $\rhodown{\rho}(o)=\rho(o)-rN^{-\frac{1}{3}}$ and $\rhoup{\rho}(o)=\rho(o)+rN^{-\frac{1}{3}}$.
	\end{lemma}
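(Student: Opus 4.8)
The plan is to deduce Lemma~\ref{lem-lb1} from Corollary~\ref{cor-lb8} via the shifting lemma (Lemma~\ref{lm:shift}), treating the two bounds \eqref{lb-1} and \eqref{lb-2} symmetrically; I describe \eqref{lb-2}. Fix $o\in\partial^N$ and $x\in\cI$. The key point is that the base-to-endpoint vector $x-o$ is, up to an $O(N^{2/3})$ perturbation, a multiple of the characteristic direction $\xi(\rho(o))$: indeed $o$ was chosen (via \eqref{xi-o}--\eqref{rho-o}) so that $\xi(-o)$ is exactly the characteristic direction of $\rho(o)$, and $x$ lies on the segment $\cI$ within distance $N^{2/3}$ of the origin. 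So write $x-o = M\xi(\rho(o)) + (\text{error})$ with $M\asymp N$ (the implied constants controlled by $\e$ since $\rho(o)\in[\sqrt\e/(1+\sqrt\e),\,1/(1+\sqrt\e)]$), where the error vector has $\ell^1$-norm $O(N^{2/3})$.

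First I would make this precise: set $M$ so that the second coordinate of $M\xi(\rho(o))$ equals the second coordinate of $x-o$ plus an integer $\fl{b N^{2/3}}$ for a suitable $b$ proportional to $r$ (using $\rhoup\rho(o)=\rho(o)+rN^{-1/3}$), mimicking the computation in the proof of Corollary~\ref{cor-lb8}. Because $\rhoup\rho(o)$ differs from $\rho(o)$ by $rN^{-1/3}$, the characteristic direction $\xi(\rhoup\rho(o))$ is tilted away from $\xi(\rho(o))$ by an amount that, along a ray of length $\asymp N$, produces a vertical displacement of order $rN^{2/3}$ — this is exactly the mean-value-theorem estimate already used in \eqref{SC} of Lemma~\ref{lm:geodev}. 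Thus, relative to the \emph{characteristic} direction of the stationary process $G^{\rhoup\rho(o)}_{o,\bbullet}$, the endpoint $x$ sits a distance of order $rN^{2/3}$ \emph{above} the characteristic ray. By the monotonicity of the exit point in the endpoint (a consequence of planarity/uniqueness of geodesics, as in the proof of Corollary~\ref{cor-lb8}), $\{Z^{\rhoup\rho(o)}_{o,x}\le dN^{2/3}\}$ is contained in an event of the form $\{Z^{\rhoup\rho(o)}_{o,\,\widetilde x}\le -c\,r N^{2/3}\}$ where $\widetilde x$ lies on the characteristic ray of $\rhoup\rho(o)$ and $c=c(\e)>0$, provided $r\ge C_0(\e)d$ so that the gap $c\,rN^{2/3}$ dominates the allowed slack $dN^{2/3}$ (plus the $O(1)$ rounding and $O(\kappa)$ terms). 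Then Corollary~\ref{cor-lb8}, in the form \eqref{lb-24} with scale parameter of order $N$ and $b\asymp r$, gives the bound $C_1(\e)r^{-3}$. The upper restriction $r\le\frac{\sqrt\e}{2(1+\sqrt\e)}N^{1/3}$ is what guarantees $\rhoup\rho(o),\rhodown\rho(o)\in[\e',1-\e']$ so that the corollary applies with $\e$-dependent (not $o$-dependent) constants; the lower restriction $C_0(\e)d\le r$ absorbs the slack $d$.

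The second step is the uniformity over $x\in\cI$ and $o\in\partial^N$. Uniformity in $o$ is automatic because all estimates are expressed through $\e$-dependent constants once $\rho(o)$ is pinned to the interval above, and $M=M(o)\asymp N$ uniformly. Uniformity in $x\in\cI$ follows from the monotonicity of $Z^{\rhoup\rho(o)}_{o,x}$ in $x$ (increasing $x$ in the $e_2$ direction can only decrease the exit point) together with the fact that all of $\cI$ lies within $N^{2/3}$ of the origin, which is an $O(N^{2/3})$ perturbation already accommodated by choosing the constant $C_0(\e)$ large enough; in fact it suffices to prove the bound for the single worst endpoint $x=\fl{N^{2/3}}e_2$ (for \eqref{lb-2}) and $x=-\fl{N^{2/3}}e_2$ (for \eqref{lb-1}), exactly as Figure~\ref{fig:geod1} suggests, and then invoke monotonicity.

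The main obstacle I expect is bookkeeping the several $O(N^{2/3})$ and $O(1)$ error terms — the rounding $\fl\cdot$, the displacement of $x$ within $\cI$, the slack $d$, and the $\kappa$-type constant from Corollary~\ref{cor-lb8} — and verifying that the single inequality $C_0(\e)d\le r$ (with $C_0$ chosen large depending only on $\e$) makes the target gap of order $rN^{2/3}$ strictly dominate all of them, so that the reduction to \eqref{lb-24} goes through with a clean constant. None of this is deep; it is the same mean-value-theorem plus monotonicity argument already executed in Corollary~\ref{cor-lb8} and in \eqref{SC}, merely applied with the base point at a general $o\in\partial^N$ rather than at the origin. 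By \eqref{Gr17} (shift-stationarity) one may even translate $o$ to the origin first, so the only genuinely new input is relating the endpoint $x\in\cI$ to the shifted characteristic ray.
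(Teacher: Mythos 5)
Your proposal is correct and follows essentially the same route as the paper's proof: reduce to the extremal endpoint $x=\pm\fl{N^{2/3}}e_2$ by planarity/uniqueness of geodesics, apply Lemma~\ref{lm:shift} and translate $o$ to the origin, introduce a new scaling parameter $M$ so that the (shifted) endpoint lies a distance of order $rN^{2/3}$ off the characteristic ray of $\rhoup\rho(o)$, and invoke \eqref{lb-24} of Corollary~\ref{cor-lb8} with $b\asymp r$, using $C_0(\e)d\le r$ to make the tilt dominate the slack $dN^{2/3}$ and the upper bound on $r$ to keep $\rhoup\rho(o),\rhodown\rho(o)$ in a compact subinterval of $(0,1)$. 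The only cosmetic difference is that the paper pins $M$ by matching the first coordinate of the shifted endpoint to $M\xi_1(\rhoup\rho)$ and then estimates the vertical deviation directly, whereas you match the second coordinate against $\xi(\rho(o))$ and appeal to the mean-value-theorem bound \eqref{SC}; both yield the required $\Theta(rN^{2/3})$ gap and the same conclusion.
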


\begin{proof}
	 The upper bound $r \le\frac{\sqrt\e}{2(1+\sqrt\e\,)}N^{1/3}$ guarantees that $\rhodown{\rho}(o), \rhoup{\rho}(o) \in[\e', 1-\e']$ for all $\rho(o)$ and hence the estimates from the increment-stationary CGM apply. 
	
	We   prove \eqref{lb-2}. \eqref{lb-1} is similar.   Represent $o\in\partial^{N\!,\,\e}$ as $o=-(aN, bN)$ where $a\vee b=1$ and $a\wedge b\in[\e , 1]$. 
	Abbreviate $\rho=\rho(o)$ and  $\rhoup{\rho}=\rhoup{\rho}(o)$.   Then   $a/b = (\frac{1-\rho}{\rho})^2$.   
	
	Uniqueness of geodesics forces the $o$ to $\fl{N^{{2}/{3}}}e_2$ geodesic to stay above the $o$ to $x\in\cI$ geodesic. Then apply 
	Lemma  \ref{app-lm1} and translate $o$ to the origin $0$ to deduce: 
	\be \label{706}  
	\begin{aligned}
	 \P\bigl\{ Z^{\rhoup{\rho}}_{o,x}\le dN^{{2}/{3}}\bigr\} & \leq \P\bigl\{ Z^{\rhoup{\rho}}_{o,\,\fl{N^{{2}/{3}}}e_2}\le dN^{{2}/{3}}\bigr\}
	 =\P\bigl\{ Z^{\rhoup{\rho}}_{o, \,\fl{N^{{2}/{3}}}e_2-\fl{dN^{{2}/{3}}}e_1}\le -1\bigr\} \\
	 &= \P\bigl\{ Z^{\rhoup{\rho}}_{0,\, (aN-\fl{dN^{{2}/{3}}},\, bN+ \fl{N^{{2}/{3}}})}\le -1\bigr\}. 
	\end{aligned}\ee
 Define a new scaling parameter $M$ by 
\[   aN-dN^{{2}/{3}} = M\xi_1(\rhoup{\rho}).   \]  
The assumption $d\le \tfrac12\e N^{1/3}    \le \tfrac12a N^{1/3} $  guarantees that $M>0$. 

To apply  \eqref{lb-24}   to the last probability in  \eqref{706}, we bound  the deviation  of   $bN+ \fl{N^{{2}/{3}}}$  from the characteristic point $M\xi_2(\rhoup{\rho})$. 
\begin{align*}
&M\xi_2(\rhoup{\rho}) -  bN-   \fl{N^{{2}/{3}}} 
\ge M\xi_2(\rhoup{\rho}) -  bN-   N^{{2}/{3}} \\[3pt]  
&\qquad = (aN-dN^{{2}/{3}}) \biggl(\frac{\rhoup{\rho}}{1-\rhoup{\rho}}\biggr)^{\!2} 
- \,aN \biggl( \frac{\rho}{1-\rho}\biggr)^{\!2} -  \, N^{{2}/{3}}  \\[3pt] 
&\qquad
= N^{2/3} \biggl(  ar\,\frac{\rhoup{\rho}+\rho-2\rho\rhoup{\rho}}{(1-\rhoup{\rho})^2(1-\rho)^2}   -d  \Bigl(\frac{\rhoup{\rho}}{1-\rhoup{\rho}}\Bigr)^{\!2}  -\, 1 \biggr)  \\[3pt] 
&\qquad
= M^{2/3} \cdot  \frac{\aaa{\rhoup{\rho}}^{2/3}(1-\rhoup{\rho})^{4/3}}{(a-dN^{-1/3})^{2/3}}  \cdot  \frac{ar(\rhoup{\rho}+\rho-2\rho\rhoup{\rho})  -d   (\rhoup{\rho})^2 (1-\rho)^{2} \, -\,  (1-\rhoup{\rho})^2(1-\rho)^2 }{(1-\rhoup{\rho})^2(1-\rho)^2}  . 
\end{align*} 
The above followed from definitions \eqref{xi-rho} and \eqref{a-xi}.  Next bound the last line from below. 
The assumption
$  r \le\frac{\sqrt\e}{2(1+\sqrt\e\,)}N^{1/3} $  guarantees that 
\[     \rhoup{\rho}+\rho-2\rho\rhoup{\rho}  
\ge  c_6(\e)  \] 
for a positive constant $c_6(\e)$ whose precise value is immaterial. 
Use additionally  $\aaa{\rhoup{\rho}}\ge 1$,  $a\ge\e$ and $d\ge 1$ 
to get  the lower bound 
\begin{align*}
M\xi_2(\rhoup{\rho}) -  bN-   \fl{N^{{2}/{3}}}    
\ge  M^{2/3}   (  c_6(\e)\e  r  -2d   ) 
\ge 
M^{2/3} c_7(\e)   r  
\end{align*} 
where the last inequality follows from   assuming 
$    r\ge  4d c_6(\e)^{-1}\e^{-1} \equiv C_0(\e)d$ and  defining  $c_7(\e)$ suitably.    
Returning to \eqref{706}, we have 
\begin{align*}
\P\bigl\{ Z^{\rhoup{\rho}}_{o,x}\le dN^{{2}/{3}}\bigr\} 
	 &= \P\bigl\{ Z^{\rhoup{\rho}}_{0,\, (aN-\fl{dN^{{2}/{3}}},\, bN+ \fl{N^{{2}/{3}}})}\le -1\bigr\}\\
	 &\le  \P\bigl\{ Z^{\rhoup{\rho}}_{0,\, ( \,\fl{ M\xi_1(\rhoup{\rho})}  ,\, \fl{M\xi_2(\rhoup{\rho})}  - \fl{M^{2/3}    r \,c_7(\e)}\, )}\le -1\bigr\}    \le  C_1(\e) r^{-3}.  
\end{align*}
The last inequality comes from \eqref{lb-24}.   The constant $\kappa$ in \eqref{lb-24}    can be fixed at 2 and ignored. 
\end{proof}

We introduce a pair of parameters   $d=(d_1,d_2)\in \Z_{\ge 1}^2$ that control coarse graining on the scale $N^{2/3}$, $d_1$  on the southwest portion of the boundary of  the square $\lzb-N,N\rzb^2$ and $d_2$ on the northeast part.    For  $o\in \partial^N$  let 
 \begin{align}
 	\cI_{o,d}=\{u \in \partial^N:\abs{u-o}_1\leq \tfrac12{d_1}N^\frac{2}{3}\}  
 \end{align}
 and 
  \be\label{o_c}
  \text{$o_c=$ the unique minimal point of $\cI_{o,d}$ in the coordinatewise partial order on $\Z^2$. }
  \ee
 For an illustration of $o$, $o_c$ and $\cI_{o,d}$  see Figure \ref{fig:points}.  
 
 For a given point $o\in\partial^N$, we compare the LPP processes $G_{u,\bbullet}$ from initial points $u\in\cI_{o,d}$ with increment-stationary LPP processes $G^{\rhodown{\rho}}_{o_c,\bbullet}$  and $G^{\rhoup{\rho}}_{o_c,\bbullet}$ with base point $o_c$ and parameters 
 \[  \rhodown{\rho}=\rho(o_c)-rN^{-\frac{1}{3}} \quad\text{ and }\quad \rhoup{\rho}=\rho(o_c)+rN^{-\frac{1}{3}}, \] 
assumed to satisfy  $\rhodown{\rho}, \rhoup{\rho}\in(0,1)$.    The weights on the boundaries with corner at $o_c$ are coupled as in \eqref{wIJ}:  
 for $i,j \ge 1$, 
	\be\label{wIJ14}
	\w_{o_c+ie_1}\le I^{\rhodown{\rho}}_{o_c+ie_1} \wedge I^{\rhoup{\rho}}_{o_c+ie_1}
	\quad\text{and}\quad 
	\w_{o_c+je_2}\le J^{\rhodown{\rho}}_{o_c+je_2} \wedge J^{\rhoup{\rho}}_{o_c+je_2}.  
	\ee

Associated to these LPP processes are vertical increment variables  on the  $y$-axis.  We are concerned now only on the range $j\in I$, so the increment variables below are well-defined once  $-\e N < -N^{2/3}$.    For $u\in\cI_{o,d}$ and   $\rho\in\{\rhodown{\rho}, \rhoup{\rho}\}$, let 
\begin{align*}
	J^u_j=G_{u,je_2}-G_{u,(j-1)e_2} 
	 \quad\text{and}\quad 
J^{\rho}_j=G^{\rho}_{o_c, je_2}-G^{\rho}_{o_c,(j-1)e_2}, \quad  j\in I.  
\end{align*}
	Define  the event
	\be\label{Aod} 
		A_{o,d} = \left\{Z^{\rhodown{\rho}}_{o_c,-\ce{N^{{2}/{3}}}e_2}<-d_1N^{\frac{2}{3}},Z^{\rhoup{\rho}}_{o_c, \ce{N^{{2}/{3}}}e_2}>d_1N^{\frac{2}{3}}\right\}.  
	\ee 

\begin{lemma}\label{lem ge1}   Let $N\ge N_0(\e)$ so that the increment variables are well-defined for $j\in I$. 
	On the event $A_{o,d}$ we have the inequalities 
	\be\label{Aod1} 
		J^{\rhoup{\rho}}_j\leq J^u_j \leq J^{\rhodown{\rho}}_j\quad \forall j\in I,u\in \cI_{o,d}.
	\ee  
	There exists a constant $C(\e)$ such that, whenever
	 $(d_1, r)$ satisfy \eqref{lb-0}, then 
	 	\be\label{Aod2}   \P\bigl(A_{o,d}^c\bigr) \le C_1(\e) r^{-3} \quad \text{ for all $o\in\partial^N$.}    \ee 
\end{lemma}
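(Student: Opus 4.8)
The bound \eqref{Aod2} is immediate from Lemma \ref{lem-lb1}. Since $o_c\in\partial^N$ and $\pm\ce{N^{2/3}}e_2$ lie in $\cI$ (or are covered by the argument proving Lemma \ref{lem-lb1}, in which $\fl{N^{2/3}}$ may be replaced by $\ce{N^{2/3}}$ without change), under \eqref{lb-0} with $d=d_1$ we have
\[
A_{o,d}^c\subseteq\bigl\{Z^{\rhodown{\rho}}_{o_c,-\ce{N^{2/3}}e_2}\ge -d_1N^{2/3}\bigr\}\cup\bigl\{Z^{\rhoup{\rho}}_{o_c,\ce{N^{2/3}}e_2}\le d_1N^{2/3}\bigr\},
\]
and \eqref{lb-1}--\eqref{lb-2} bound each of the two events by $C_1(\e)r^{-3}$; a union bound and a relabeling of the constant finish this part.

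For \eqref{Aod1} the plan has two steps. \emph{Step 1: locate the stationary geodesics.} The signed exit point $x\mapsto Z^{\rho}_{o_c,x}$ is monotone in the endpoint — non-increasing under an $e_2$-shift and non-decreasing under an $e_1$-shift of $x$ — by non-crossing of finite geodesics. As every $p\in\cI$ shares its first coordinate with $\pm\ce{N^{2/3}}e_2$ and has second coordinate in between, on $A_{o,d}$ one gets $Z^{\rhodown{\rho}}_{o_c,p}<-d_1N^{2/3}$ and $Z^{\rhoup{\rho}}_{o_c,p}>d_1N^{2/3}$ for \emph{every} $p\in\cI$; equivalently, by uniqueness of finite geodesics, the geodesic of $G^{\rhodown{\rho}}_{o_c,\bbullet}$ to $p$ runs straight up the west boundary of $o_c+\Z_{\ge0}^2$ to height $>d_1N^{2/3}$ before leaving it, while the geodesic of $G^{\rhoup{\rho}}_{o_c,\bbullet}$ to $p$ runs straight along the south boundary to distance $>d_1N^{2/3}$. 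Since $\cI_{o,d}$ is a sub-block of $\partial^N$ of $\ell^1$-diameter at most $d_1N^{2/3}$ with minimal point $o_c$, each $u\in\cI_{o,d}$ lies on one of the two coordinate half-axes from $o_c$, at lattice distance $\le d_1N^{2/3}$, hence strictly before both exit points.

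\emph{Step 2: re-base at $u$ and compare increments.} Fix $u\in\cI_{o,d}$ and $j\in I$, and put $p=(0,j)$, $p'=(0,j-1)\in\cI$. Treat the case $\cI_{o,d}\subset\lzb -N,-\e N\rzb\times\{-N\}$, so $u=o_c+me_1$ with $0\le m\le d_1N^{2/3}$; the other edge is symmetric under the diagonal reflection, and if $\cI_{o,d}$ contains $(-N,-N)$ then $o_c=(-N,-N)$ and each $u$ is of one of the two types. By Step 1 the geodesics of $G^{\rhoup{\rho}}_{o_c,\bbullet}$ to $p$ and to $p'$ both pass through $u$, so by the semigroup property of last-passage percolation
\[
J^{\rhoup{\rho}}_j=G^{\rhoup{\rho}}_{o_c,p}-G^{\rhoup{\rho}}_{o_c,p'}
\]
equals the vertical increment at $p$ of last-passage percolation based at $u$ with boundary weights $G^{\rhoup{\rho}}_{o_c,u+ie_1}-G^{\rhoup{\rho}}_{o_c,u}$ and $G^{\rhoup{\rho}}_{o_c,u+ke_2}-G^{\rhoup{\rho}}_{o_c,u}$ on its south and west half-axes; these dominate the corresponding bulk weights $\w$ pointwise — on the south half-axis by \eqref{wIJ14}, and on the west half-axis because an $I$- or $J$-increment of any LPP process at a site is at least the weight there (this is \eqref{wIJ14} again when $m=0$, and the recursion $G_x\ge G_{x-e_1}\vee G_{x-e_2}+\w_x$ otherwise). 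As the re-based geodesics to $p,p'$ still exit along the south half-axis, the comparison (crossing) lemma for last-passage increments — the same tool used in the proof of Lemma \ref{lm:geodev}, going back to \cite{cato-groe-06,bala-cato-sepp} (see also \cite{sepp-cgm-18}) — gives $J^{\rhoup{\rho}}_j\le J^u_j$. For the opposite inequality, let $\ell^-\le\ell^+$ ($>d_1N^{2/3}$) be the heights above $o_c$ at which the $G^{\rhodown{\rho}}_{o_c,\bbullet}$-geodesics to $p'$ and to $p$ leave the west boundary; keeping the index $\ell^-$ in the maximum that defines $G^{\rhodown{\rho}}_{o_c,p}$ and using that this maximum is attained at $\ell^-$ for $G^{\rhodown{\rho}}_{o_c,p'}$ yields $J^{\rhodown{\rho}}_j\ge G_{q,p}-G_{q,p'}$ with $q=o_c+\ell^-e_2+e_1$; and $G_{q,p}-G_{q,p'}\ge J^u_j$ by base-point monotonicity of the vertical increment (it cannot decrease when the base point moves up or to the left, and $q$ lies far above and weakly left of $u$ when $m\ge1$; when $m=0$ one re-bases the $\rhodown{\rho}$-geodesic at $o_c+e_2$ instead, reducing to the same monotonicity with a single upward step). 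This proves \eqref{Aod1}.

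The crux is Step 2: the pathwise sandwiching of $J^u_j$ between $J^{\rhoup{\rho}}_j$ and $J^{\rhodown{\rho}}_j$, which rests on the crossing/monotonicity lemma for LPP increments combined with the boundary-weight domination supplied by the coupling \eqref{wIJ14}; Step 1 and the probability estimate are routine.
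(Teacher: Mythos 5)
Your proof is correct in substance and follows essentially the same strategy as the paper's: on $A_{o,d}$, exit-point monotonicity forces the geodesics of $G^{\rhodown{\rho}}_{o_c,\bbullet}$ and $G^{\rhoup{\rho}}_{o_c,\bbullet}$ to targets in $\cI$ to overshoot every $u\in\cI_{o,d}$ along the west (resp.\ south) boundary, and then re-basing (Lemma~\ref{app-lm1}) combined with the boundary-weight comparison Lemma~\ref{lm:G13} and the Crossing Lemma~\ref{lm:crs} sandwiches $J^u_j$. The paper packages the south/west bookkeeping into a single auxiliary process $\wt G$ with $J^{\rhodown{\rho}}$ on the $y$-axis and $\w$ elsewhere, whereas you re-base at $u$ (for $\rhoup{\rho}$) and at the exit vertex $q$ (for $\rhodown{\rho}$); these are minor variations. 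Two places in your write-up are imprecise and worth tightening. First, in the $J^{\rhoup{\rho}}_j\le J^u_j$ step, the observation that the re-based \emph{west} boundary weights dominate $\w$ goes in the wrong direction for Lemma~\ref{lm:G13}; the correct point is that because the re-based geodesics exit on the south half-axis, the west boundary weights can be replaced by $\w$ without changing $J^{\rhoup{\rho}}_j$, after which Lemma~\ref{lm:G13} (larger south, equal west) applies. Second, the claim that the $u=o_c+\ell e_2$ case ``is symmetric under the diagonal reflection'' is not literally true since that reflection swaps $J$- and $I$-increments while the lemma is stated for $J$-increments over edges of $\cI$; what is true is that the argument for that case is structurally parallel (with the roles of $\rhoup{\rho}$ and $\rhodown{\rho}$ and of Lemma~\ref{lm:G13} and Lemma~\ref{lm:crs} interchanged), and the $m=0$ workaround is cleaner if you simply note that the $\rhodown{\rho}$-geodesic's west exit makes the south boundary of $G^{\rhodown{\rho}}_{o_c,\bbullet}$ irrelevant, so Lemma~\ref{lm:G13} applies at $o_c$ directly.
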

\begin{proof}   We prove the second inequality of \eqref{Aod1}. The first one  comes analogously.   

Let $\wt G_{x,y}$ be the LPP process on the quadrant $o_c+\Z_{\ge0}^2$  that uses weights $\wt\w$ defined by $\wt\w_{o_c+je_2}=J^{\rhodown{\rho}}_{o_c+je_2}$ for $j\ge 1$,  $\wt\w_{o_c}=0$,  and $\wt\w_{o_c+x}=\w_{o_c+x}$ for $x\cdot e_1>0$. 

Suppose first that $u=o_c+\ell e_2$ for some $\ell\ge 0$.   The uniqueness of finite geodesics together with the first inequality of the event $A_{o,d}$ implies  that  $Z^{\rhodown{\rho}}_{o_c, x}<-d_1N^{\frac{2}{3}}$ for all $x\in\cI$.  Hence both   $u$ and $u+e_2$  lie on the geodesic of $G^{\rhodown{\rho}}_{o_c, x}$ for all $x\in\cI$. Consequently 
\[    G^{\rhodown{\rho}}_{o_c, x+e_2}- G^{\rhodown{\rho}}_{o_c, x}
=  \wt G_{u, x+e_2} - \wt G_{u, x}.   \]
   Lemma \ref{lm:G13} gives the inequality 
\[ \wt G_{u, x+e_2} - \wt G_{u, x}  \ge G_{u, x+e_2} - G_{u, x} . \]

The other case is that $u=o_c+k e_1$ for some $k\ge 1$. Then  $G^{\rhodown{\rho}}_{o_c, x} =   \wt G_{o_c, x}$  and $G_{u, x}=\wt G_{u, x}$.  This time the conclusion follows from Lemma \ref{lm:crs}.   


  Bound \eqref{Aod2} comes from Lemma \ref{lem-lb1}. 
\end{proof}
Next we perform  the analogous construction in the northeast quadrant. As in \eqref{ne}, 
$\rim{\partial}^N=\{N\}\times \lzb \e N,N \rzb \cup \lzb \e N,N\rzb \times \{N\}$. 
A point  $\rim{o}=(\rim{o}_1,\rim{o}_2)\in \rim{\partial}^N$ is  associated with a density $\rho(\rim{o})\in (0,1)$ and a direction $ \xi(\rim{o})\in\,]e_2, e_1[$   through the relations  \eqref{xi-o}--\eqref{rho-o}. 
For $x,y\in \Z^2$ such that $x\leq y$   define a reversed last-passage process  $\rim{G}_{y,x}=G_{x,y}$ in terms of the i.i.d.\ Exp(1) $\w$-weights.  

For each parameter value $\rho\in(0,1)$, analogously with \eqref{Gr11a}--\eqref{Gr12a},  we define  stationary last-passage percolation processes   $\rim{G}^\rho_{\rim{o}, \bbullet}$ on the southwest quadrant $\rim{o}+\mathbb{Z}^2_{\le0}$.   	 Let
 $\{\wh I^\rho_{\rim{o}-ie_1}\}_{i\in\Z_{>0}}$
	  and $\{\wh J^\rho_{\rim{o}-je_2}\}_{j\in\Z_{>0}} $  
   be mutually independent  boundary weights   on the north and east,  with marginal distributions $\wh I^\rho_{\rim{o}-ie_1}\sim\text{Exp}(1-\rho)$ and $\wh J^\rho_{\rim{o}-je_2}\sim\text{Exp}(\rho)$, independent of  the boundary variables $I^\rho_{o+ie_1}, J^\rho_{o+je_2}$ in the southwest quadrant. 
     Put  $\rim{G}^\rho_{\rim{o},\,\rim{o}}=0$ and on the boundaries 
\be\label{Gr11} \rim{G}^\rho_{\rim{o},\,\rim{o}-ke_1}=\sum_{i=1}^k \wh I^\rho_{\rim{o}-ie_1} 
\quad\text{and}\quad
\rim{G}^\rho_{\rim{o},\,\rim{o}-le_2}= \sum_{j=1}^l  \wh J^\rho_{\rim{o}-je_2} .   \ee 
Then in the bulk 
for $x=(x_1,x_2)\in \rim{o}+ \Z_{<0}^2$, 
\be\label{Gr12}
\rim{G}^\rho_{\rim{o},\,x}= \max_{1\le k\le \rim{o}_1-x_1} \;  \Bigl\{  \;\sum_{i=1}^k \wh I^\rho_{\rim{o}-ie_1}  + \rim{G}_{ \,\rim{o}-ke_1-e_2,\,x} \Bigr\}  
\bigvee
\max_{1\le \ell\le \rim{o}_2-x_2}\; \Bigl\{  \;\sum_{j=1}^\ell  \wh J^\rho_{\rim{o}-je_2}  + \rim{G}_{ \,\rim{o}-\ell e_2-e_1,\,x} \Bigr\} .
\ee
For a southwest endpoint  $p\in \rim{o}+\Z_{<0}^2$, let $\rim{Z}^\rho_{\rim{o},p}$  be the signed exit point of the geodesic $\rim{\pi}^{\rho,\rim{o},p}_{\bbullet}$ of $\rim{G}^\rho_{\rim{o},p}$ from the north and east boundaries  of $\rim{o}+\Z_{\le0}^2$.   Precisely,
\begin{align}
\rim{Z}^\rho_{\rim{o},\,x}=
\begin{cases}
\argmax{k\ge 1} \bigl\{ \,\sum_{i=1}^k \wh I^\rho_{\rim{o}-ie_1}  + \rim{G}_{\,\rim{o}-ke_1-e_2,\,x} \bigr\},  &\text{if }\rim{\pi}^{\rho,\rim{o},\,x}_1=\rim{o}-e_1,\\
-\argmax{\ell\ge 1}\bigl\{  \;\sum_{j=1}^\ell  \wh J^\rho_{\rim{o}-je_2}  + \rim{G}_{\,\rim{o}-\ell e_2-e_1,\,x} \bigr\},  &\text{if } \rim{\pi}_1^{\rho,\rim{o},\,x}=\rim{o}-e_2.
\end{cases}
\end{align}
 For  $\rim{o}\in \rim{\partial}^N$ 
 let
 \begin{align*}
 \rim{\cI}_{\rim{o},d}=\bigl\{v \in \rim{\partial}^N:\abs{v-\rim{o}}_1\leq \tfrac12{d_2}N^{{2}/{3}}\bigr\} 
 \end{align*}
 and (with a illustration in Figure \ref{fig:points}),  
  \be\label{rimo_c}
  \text{$\rim{o}_c=$ the unique maximal point of $\rim{\cI}_{\rim{o},d}$ in the coordinatewise partial order on $\Z^2$. }
  \ee

Define again parameters 
 \[  \rhodown{\rho}=\rho(\rim{o}_c)-rN^{-\frac{1}{3}} \quad\text{ and }\quad 
 \rhoup{\rho}=\rho(\rim{o}_c)+rN^{-\frac{1}{3}}. \]
Define increment variables    on the  vertical edges $\{ (x+e_1, x+e_1+e_2):  x\in\cI  \}$ shifted by $e_1$ from $\cI$.  For  $v\in \rim{\cI}_{\rim{o},d}$,  $j\in I$,  and $\rho\in\{\rhodown{\rho}, \rhoup{\rho}\}$, let 
\begin{align*}
	\rim{J}^v_j=\rim{G}_{v,e_1+(j-1)e_2}-\rim{G}_{v,e_1+je_2}
	 \quad\text{and}\quad 
\rim{J}^{\rho}_j&=\rim{G}^{\rho}_{\rim{o}_c,\,e_1+(j-1)e_2}-\rim{G}^{\rho}_{\rim{o}_c,\,e_1+je_2}.  
\end{align*}
Define  the event
	\be\label{Bod} 
	B_{\rim{o},d}=\Bigl\{\rim{Z}^{\rhodown{\rho}}_{\rim{o}_c,N^{{2}/{3}}e_2+e_1}<-d_2N^{\frac{2}{3}}, \; \rim{Z}^{\rhoup{\rho}}_{\rim{o}_c,-N^{{2}/{3}}e_2+e_1}>d_2N^{\frac{2}{3}}\Bigr\}.  
	\ee 
 We have this analogue of  Lemma \ref{lem ge1}. 

%

\begin{lemma}\label{lem ge2}    Let $N\ge N_0(\e)$ so that the increment variables are well-defined for $j\in I$. 
	On the event $B_{\rim{o},d}$ we have the inequalities 
\be\label{Bod1} 
	\rim{J}^{\rhoup{\rho}}_j\leq \rim{J}^v_j \leq \rim{J}^{\rhodown{\rho}}_j\, \qquad \forall j\in I,v\in \rim{\cI}_{\rim{o},d}.
	\ee	
	There exists a constant $C(\e)$ such that, whenever
	 $(d_2, r)$ satisfy \eqref{lb-0}, then 
	 	\be\label{Bod2}   \P\bigl(B_{\rim{o},d}^c\bigr) \le C_1(\e) r^{-3} \quad \text{ for all $\rim{o}\in\rim{\partial}^N$.}    \ee 
\end{lemma}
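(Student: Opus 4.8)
The plan is to mirror the proof of Lemma \ref{lem ge1} under the reflection that turns the northeast quadrant into the southwest quadrant. Concretely, let $R$ denote the point reflection through a suitable center so that $\rim{G}_{y,x}=G_{x,y}$ becomes an ordinary last-passage process on the reflected quadrant, $\rim{o}_c$ becomes the minimal corner of a reflected block, and the boundary couplings \eqref{wIJ14} become exactly the couplings used in Lemma \ref{lem ge1}. Under this reflection the event $B_{\rim{o},d}$ in \eqref{Bod} becomes an event of the form $A_{o,d}$ in \eqref{Aod}, the increment comparison \eqref{Bod1} becomes \eqref{Aod1}, and \eqref{Bod2} becomes \eqref{Aod2}. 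Since the i.i.d.\ Exp$(1)$ weight field is invariant in distribution under $R$, every distributional statement transfers verbatim. So the cleanest write-up is: apply $R$, invoke Lemma \ref{lem ge1} (and the inputs behind it), then undo $R$.

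If one prefers a direct argument not routed through reflection, then the steps are the following. First, prove the second inequality $\rim{J}^v_j\le\rim{J}^{\rhodown{\rho}}_j$ of \eqref{Bod1}; the first follows by the symmetric argument swapping the roles of $\rhoup{\rho}$ and $\rhodown{\rho}$ and the two axes. Introduce the modified process $\wt{\rim G}$ on $\rim{o}_c+\Z_{\le0}^2$ that replaces the bulk $x$-axis column at $\rim{o}_c$ (equivalently the east boundary after reflection) by the stationary boundary increments $\wh J^{\rhodown{\rho}}_{\rim{o}_c-je_2}$, keeping all other weights equal to $\w$. On the event $B_{\rim{o},d}$, the first inequality $\rim{Z}^{\rhodown{\rho}}_{\rim{o}_c,\,N^{2/3}e_2+e_1}<-d_2N^{2/3}$ together with uniqueness of finite geodesics forces $\rim{Z}^{\rhodown{\rho}}_{\rim{o}_c,x}<-d_2N^{2/3}$ for every $x$ of the form $e_1+je_2$ with $j\in I$, so the stationary geodesic from $\rim{o}_c$ to each such $x$ enters through the east boundary, and in particular runs through $v$ and $v+e_2$ whenever $v\in\rim{\cI}_{\rim{o},d}$ sits on the east boundary. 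This gives the identity $\rim{G}^{\rhodown{\rho}}_{\rim{o}_c,x}-\rim{G}^{\rhodown{\rho}}_{\rim{o}_c,x+e_2}=\wt{\rim G}_{v,x}-\wt{\rim G}_{v,x+e_2}$, after which the crossing-lemma inputs (the analogues of Lemma \ref{lm:G13} and Lemma \ref{lm:crs}, with Lemma \ref{lm:crs} covering the case $v=\rim{o}_c-ke_1$ on the north boundary) yield $\wt{\rim G}_{v,x}-\wt{\rim G}_{v,x+e_2}\ge \rim{G}_{v,x}-\rim{G}_{v,x+e_2}$, i.e. $\rim{J}^{\rhodown{\rho}}_j\ge\rim{J}^v_j$. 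The coupling \eqref{wIJ14} is what makes the stationary boundary increments dominate the bulk ones.

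For \eqref{Bod2}, the event $B_{\rim{o},d}$ is (after reflection) the event $A_{\rim{o}_c,d}$, and its complement is controlled by Lemma \ref{lem-lb1}: the upper bound on $r$ in \eqref{lb-0} keeps $\rhodown{\rho},\rhoup{\rho}\in[\e',1-\e']$, and the lower bound $r\ge C_0(\e)d_2$ forces the exit points of $\rim{G}^{\rhodown{\rho}}_{\rim{o}_c,\bbullet}$ onto the east axis and of $\rim{G}^{\rhoup{\rho}}_{\rim{o}_c,\bbullet}$ onto the north axis, a distance $d_2N^{2/3}$ away on the $N^{2/3}$ scale, each with probability at least $1-C_1(\e)r^{-3}$. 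A union bound over the two exit-point events, uniform in $\rim{o}\in\rim{\partial}^N$ because $\rho(\rim{o}_c)$ ranges over a compact subinterval of $(0,1)$ by the bounds displayed after \eqref{rho-o}, gives \eqref{Bod2}.

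The main obstacle is bookkeeping rather than mathematics: one must be careful that the reflection maps $\rim{\cI}_{\rim{o},d}$, the corner $\rim{o}_c$, the shifted vertical edges $\{(x+e_1,x+e_1+e_2):x\in\cI\}$, and the sign conventions in the definition of $\rim{Z}$ correctly onto their counterparts in Section \ref{s:body}, and that the ``$e_1$-shift'' built into $\rim{J}^v_j$ and $\rim{J}^\rho_j$ (so that these increments live strictly in the bulk and are thus well-defined) matches the role played by $o_c$ lying on the axes in Lemma \ref{lem ge1}. Once the reflection dictionary is set up, the proof is a transcription; I would state it as such — ``The proof is the same as that of Lemma \ref{lem ge1} after reflecting through the center of the square, using Lemma \ref{lem-lb1} for the probability bound'' — and only spell out the increment comparison and the union bound if the referee wants the details.
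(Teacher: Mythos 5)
The paper gives no proof of Lemma \ref{lem ge2}; it is introduced with the single sentence ``We have this analogue of Lemma \ref{lem ge1}'' and the argument is left implicit. Your proposal is exactly the intended one: reflect through the center so that the northeast quadrant becomes a southwest quadrant, identify $B_{\rim{o},d}$ with an event of type $A_{o,d}$ and the $\rim J$-increments with the $J$-increments, and transport Lemma \ref{lem ge1} (together with the domination coupling analogous to \eqref{wIJ14} and the exit-point bound of Lemma \ref{lem-lb1}) through the reflection, using the distributional invariance of the i.i.d.\ weights under $R$. Your direct version is also correct: the ordering of reversed geodesics under uniqueness propagates the condition $\rim Z^{\rhodown\rho}_{\rim o_c,\,N^{2/3}e_2+e_1}<-d_2N^{2/3}$ to all endpoints $e_1+je_2$, $j\in I$, and the comparison $\rim J^v_j\le\rim J^{\rhodown\rho}_j$ then follows from the analogues of Lemma \ref{lm:G13} and Lemma \ref{lm:crs} exactly as in the proof of Lemma \ref{lem ge1}; the probability bound \eqref{Bod2} is a union bound over the two exit-point events via Lemma \ref{lem-lb1}, uniform over $\rim o\in\rim\partial^N$. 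One small but genuine service your write-up performs is to make explicit the coupling between the bulk weights and the northeast boundary weights $\wh I^\rho, \wh J^\rho$ (the analogue of \eqref{wIJ14}), which the paper uses implicitly but never states.
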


Let $o\in \partial^N,\rim{o}\in\rim{\partial}^N$ and consider LPP from points $u\in \cI_{o,d}$ to the interval $\cI$ on the $y$-axis and reverse LPP from   points $v\in \rim{\cI}_{\rim{o},d}$ to the shifted interval $e_1+\cI$. Abbreviate   $\rhoup{\lambda}=\rhoup{\rho}(\rim{o}_c)$, $\rhodown{\lambda}=\rhodown{\rho}(\rim{o}_c)$ and $\rhoup{\rho}=\rhoup{\rho}(o_c)$ , $\rhodown{\rho}=\rhodown{\rho}(o_c)$. 

 A given sequence of  steps $\{X_j\}$ defines a two-sided walk $S(X)$   by 
\begin{align*}
S_n(X)=
\begin{cases} 
\sum_{j=1}^{n}X_j & n \geq 1 \\
0 & n=0\\
-\sum_{j=n+1}^{0}X_j & n < 0. 
\end{cases}
\end{align*}
Use this notation to define three random walks indexed by the edges 
$\{ ((0,j),(1,j)):  j\in I \}$ that run along the $y$-axis.   The steps are defined 
   by  
\begin{align*}
	X^{u,v}_j=J^u_j-\rim{J}^v_j , \quad 
	Y'_j=J^{\rhodown{\rho}}_j-\rim{J}^{\rhoup{\lambda}}_j, \quad\text{and}\quad  
	Y_j=J^{\rhoup{\rho}}_j-\rim{J}^{\rhodown{\lambda}}_j. 
\end{align*}
The corresponding  walks are denoted by 
\[ 
	S^{u,v}=S(X^{u,v}), \quad 
	S^{'}=S(Y{'}),  \quad\text{and}\quad  
	S=S(Y).  
\]   
Recall the events defined in \eqref{Aod} and \eqref{Bod}. 
\begin{lemma}\label{lm:sw}
	The  processes 
	\be \label{cor b-rw1}
	\{S^{'}_m :  m\in \lzb-N^{2/3} ,-1\rzb\,\}  \quad\text{and}\quad 
	 \{S_n: n\in \lzb1,N^{2/3} \rzb\,\} 
		\quad\text{are independent.} 
	\ee 
	On the event $A_{o,d}\cap B_{\rim{o},d}$, for all $u\in \cI_{o,d}$ and $v\in \rim{\cI}_{\rim{o},d}$,  
	\be\label{sw5} \begin{aligned}
		S_n &\leq S_n^{u,v}\leq S^{'}_n \quad\text{for } \  n\in \lzb1,N^{{2}/{3}}\rzb\\
	\text{and}\quad 
		S^{'}_n &\leq S_n^{u,v}\leq S_n \quad\text{for } \   n\in \lzb-N^{{2}/{3}},-1\rzb.  
	\end{aligned}\ee
\end{lemma}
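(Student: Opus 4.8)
The statement has two essentially independent halves, which I would treat separately: the deterministic sandwich \eqref{sw5} valid on $A_{o,d}\cap B_{\rim{o},d}$, and the independence \eqref{cor b-rw1}.

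\emph{The sandwich.} On $A_{o,d}\cap B_{\rim{o},d}$, Lemma~\ref{lem ge1} supplies $J^{\rhoup{\rho}}_j\le J^u_j\le J^{\rhodown{\rho}}_j$ for all $j\in I$ and $u\in\cI_{o,d}$, and Lemma~\ref{lem ge2} supplies $\rim{J}^{\rhoup{\lambda}}_j\le\rim{J}^v_j\le\rim{J}^{\rhodown{\lambda}}_j$ for all $j\in I$ and $v\in\rim{\cI}_{\rim{o},d}$. Subtracting the second chain from the first (the inequalities attached to the $\rim{J}$ variables flipping under the minus sign, so that the top of one chain combines with the bottom of the other) yields the edgewise bound $Y_j\le X^{u,v}_j\le Y'_j$ for every $j\in I$. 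Summing over $j\in\lzb 1,n\rzb$ for $n\ge 1$ gives $S_n\le S^{u,v}_n\le S'_n$; summing over $j\in\lzb n+1,0\rzb$ for $n<0$ and negating, as the definition of the two-sided walk $S(\,\cdot\,)$ prescribes, reverses this into $S'_n\le S^{u,v}_n\le S_n$. That is \eqref{sw5}.

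\emph{The independence.} Note first that $\{S'_m:m\in\lzb -N^{2/3},-1\rzb\}$ is a function of $\{J^{\rhodown{\rho}}_j,\rim{J}^{\rhoup{\lambda}}_j: j\in I,\ j\le 0\}$, while $\{S_n:n\in\lzb 1,N^{2/3}\rzb\}$ is a function of $\{J^{\rhoup{\rho}}_j,\rim{J}^{\rhodown{\lambda}}_j: j\in I,\ j\ge 1\}$, so it suffices to show that the four families $\{J^{\rhodown{\rho}}_j\}_{j\le 0}$, $\{J^{\rhoup{\rho}}_j\}_{j\ge 1}$, $\{\rim{J}^{\rhoup{\lambda}}_j\}_{j\le 0}$, $\{\rim{J}^{\rhodown{\lambda}}_j\}_{j\ge 1}$ (all indices restricted to $I$) are jointly independent. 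Three ingredients combine to this end. (1) By \eqref{Gr12a} the forward stationary processes based at $o_c$, evaluated on the $y$-axis, use only their own boundary weights and bulk weights $\w_y$ with $y_1\le 0$; by \eqref{Gr12} the reverse stationary processes based at $\rim{o}_c$, evaluated on the line $\{y_1=1\}$, use only their own (independent) boundary weights and bulk weights $\w_y$ with $y_1\ge 1$. These two weight collections are disjoint and jointly independent, so the forward pair $(\{J^{\rhodown{\rho}}_j\}_{j\in I},\{J^{\rhoup{\rho}}_j\}_{j\in I})$ is independent of the reverse pair $(\{\rim{J}^{\rhoup{\lambda}}_j\}_{j\in I},\{\rim{J}^{\rhodown{\lambda}}_j\}_{j\in I})$. (2) Theorem~\ref{th:st-lpp}(i) applied along the $y$-axis with base $o_c$, using the vertical line through $x=(0,0)$ and $\rhodown{\rho}<\rhoup{\rho}$ in the roles of $\lambda<\rho$ (for $N\ge N_0(\e)$ the lower end of the relevant vertical strip sits below $-N^{2/3}$, so all of $I\cap\Z_{\le 0}$ is covered), gives $\{J^{\rhodown{\rho}}_j\}_{j\le 0}\perp\{J^{\rhoup{\rho}}_j\}_{j\ge 1}$. (3) The point-reflected version of Theorem~\ref{th:st-lpp}(i), applied to the coupled reverse stationary processes, gives $\{\rim{J}^{\rhoup{\lambda}}_j\}_{j\le 0}\perp\{\rim{J}^{\rhodown{\lambda}}_j\}_{j\ge 1}$; under the reflection the roles of the larger and smaller parameters, and of the lower and upper strips, are interchanged, which is exactly why $S'$ carries $\rhoup{\lambda}$ on the negative half of the index set while $S$ carries $\rhodown{\lambda}$ on the positive half. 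Items (1)--(3) together make the four families jointly independent, whence $\{Y'_j\}_{j\le 0}\perp\{Y_j\}_{j\ge 1}$ and \eqref{cor b-rw1} follows.

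\emph{Main obstacle.} The edgewise comparison and the telescoping sums are routine. The delicate step is the independence: one must pin down precisely which weights each stationary process consults---the disjoint half-plane observation, valid because the $y$-axis increments and the $\{y_1=1\}$ increments live in the disjoint half-planes $\{y_1\le 0\}$ and $\{y_1\ge 1\}$---and then correctly track how the point reflection pairs the parameters $\rhodown{\lambda},\rhoup{\lambda}$ with the two halves of the index set, since that pairing is what dictates the asymmetric definitions of $Y$ and $Y'$.
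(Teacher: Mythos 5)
Your proof is correct and follows the same route as the paper's: the sandwich \eqref{sw5} via Lemmas~\ref{lem ge1} and~\ref{lem ge2}, and the independence \eqref{cor b-rw1} by combining the disjointness of the weight supports of the forward and reverse stationary processes with the within-family independence from Theorem~\ref{th:st-lpp}(i) (applied directly in the southwest and in reflected form in the northeast). You simply spell out in more explicit detail the half-plane support observation and the reflection bookkeeping that the paper's proof states compactly.
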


\begin{proof}  The independence of the stationary LPP processes defined on the southwest and northeast quadrants implies that the processes $\{J^{\rhodown{\rho}}_j,  J^{\rhoup{\rho}}_j\}_{j\in I} $ and  $\{ \rim{J}^{\rhodown{\lambda}}_j, \rim{J}^{\rhoup{\lambda}}_j\}_{j\in I}$ are independent of each other.  Theorem \ref{th:st-lpp}(i)  implies that within these processes,  $\{J^{\rhodown{\rho}}_j\}_{j\le 0}$ and   $\{J^{\rhoup{\rho}}_j\}_{j\ge 1} $ are independent, as are  $\{\rim{J}^{\rhodown{\lambda}}_j\}_{j\ge 1}$ and   $\{\rim{J}^{\rhoup{\lambda}}_j\}_{j\le 0} $.   (Note the switch in the direction of indexing: since the geodesics of $\rim{G}^{\rim{o}_c}_{\rim{o}_c, \bbullet}$  proceed southwest instead of northeast, application of Theorem \ref{th:st-lpp} requires reversal of lattice directions.) 
 
 Inequalities \eqref{sw5} come from the inequalities \eqref{Aod1} and \eqref{Bod1}. 
	\end{proof}
 
 Next observe that  the walk $S^{u,v}(X)$ controls the edge along   which  the geodesic $\pi^{u,v}$ steps away from  the $y$-axis. 
\be\label{eq:var1}\begin{aligned} 
G_{u, v} &= \sup_{u_2 \leq n \leq v_2}\big\{G_{u,(0,n)}+\rim{G}_{v,(1,n)}\big\}\\ \nonumber
&=\sup_{u_2 \leq n \leq v_2}\bigl\{G_{u,(0,0)}+[G_{u,(0,n)}-G_{u,(0,0)}]  
+\rim{G}_{v,e_1}-[\rim{G}_{v, (1,0)}-\rim{G}_{v,(1,n)}]\bigr\}\\ \nonumber
&=\sup_{u_2 \leq n \leq v_2}\{G_{u,(0,0)}+\rim{G}_{v,(1,0)}+S^{u,v}_n\}.
\end{aligned}\ee
In consequence,  
\be\label{S-pi}\begin{aligned}
&\text{for $u\in \cI_{o,d}$ and $v\in \rim{\cI}_{\rim{o},d}$, the geodesic $\pi^{u,v}$ goes along  the edge ${((0,j),(1,j))}$}\\ 
&\text{if and only if  }  j=\argmax{u_2\leq n\le v_2}\{S^{u,v}_n\},    
\end{aligned}\ee
that is, if and only if the almost surely  unique maximum of $S^{u,v}_n$ is taken at $n=j$. 

Let $o\in \partial^N$ and $\rim{o}=-o\in\rim{\partial}^N$  as in Figure \ref{fig:points}.
Let $o_c\in\cI_{o,d}$ be defined by  \eqref{o_c} and $\rim{o}_c\in \rim{\cI}_{\rim{o},d}$ by \eqref{rimo_c}. 
We will take $d_1\ne d_2$, so $\rim{o}_c\ne-o_c$.   
   For $u\in \cI_{o,d}$ and $v\in \rim{\cI}_{\rim{o},d}$ define the event
\begin{align}
	\label{geo-zero}
	&\eventcB^{u,v}=\{\text{geodesic $\pi^{u,v}$ uses  edge $((0,0), (1,0))$}\} . 
\end{align}

\begin{lemma}\label{lm:clo}
	Let $r=N^\frac{2}{15}$ and $d=(1,N^\frac{1}{8})$. 
	There exist constants  $C(\e), N_0(\e)$ such that for all 
	  $o\in \partial^N$  and $N\ge N_0(\e)$, 
	\begin{align}
		\P\biggl(\; \bigcup_{u\,\in\,\cI_{o,d}, \,v\,\in\,\rim{\cI}_{\rim{o},d}}\eventcB^{u,v} \biggr)  
		\leq C(\e)N^{-2/5}.
	\end{align}
\end{lemma}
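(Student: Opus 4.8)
The plan is to split $\P\bigl(\bigcup_{u,v}\eventcB^{u,v}\bigr)$ into an ``atypical'' part handled by Lemmas \ref{lem ge1}--\ref{lem ge2} and a ``typical'' part handled by a random-walk estimate applied to the boundary walks $S,S'$ of Lemma \ref{lm:sw}. First I would write
\[
\P\Bigl(\bigcup_{u,v}\eventcB^{u,v}\Bigr)\le \P\bigl(A_{o,d}^c\bigr)+\P\bigl(B_{\rim{o},d}^c\bigr)+\P\Bigl(A_{o,d}\cap B_{\rim{o},d}\cap\bigcup_{u,v}\eventcB^{u,v}\Bigr).
\]
With $r=N^{2/15}$ and $d=(d_1,d_2)=(1,N^{1/8})$ one checks for $N\ge N_0(\e)$ that both $(d_1,r)$ and $(d_2,r)$ satisfy \eqref{lb-0}: indeed $1\le d_1\le d_2=N^{1/8}\le\tfrac12\e N^{1/3}$, $r=N^{2/15}\le\tfrac{\sqrt\e}{2(1+\sqrt\e)}N^{1/3}$, and $C_0(\e)d_2=C_0(\e)N^{1/8}\le N^{2/15}=r$ since $N^{2/15-1/8}=N^{1/120}\to\infty$. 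Hence \eqref{Aod2} and \eqref{Bod2} give $\P(A_{o,d}^c)\le C_1(\e)r^{-3}=C_1(\e)N^{-2/5}$ and likewise $\P(B_{\rim{o},d}^c)\le C_1(\e)N^{-2/5}$.

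Next I would reduce the remaining term to the two boundary walks. Fix $u\in\cI_{o,d}$ and $v\in\rim{\cI}_{\rim{o},d}$. Since $o\in\partial^{N\!,\,\e}$ has both coordinates $\le-\e N$ and $\rim{o}=-o$ has both coordinates $\ge\e N$, for $N\ge N_0(\e)$ we have $u_2<-N^{2/3}<N^{2/3}<v_2$, so $\lzb-N^{2/3},N^{2/3}\rzb\subseteq\lzb u_2,v_2\rzb$. By \eqref{S-pi}, on $\eventcB^{u,v}$ the a.s.\ unique maximizer of $n\mapsto S^{u,v}_n$ over $\lzb u_2,v_2\rzb$ is $n=0$, and as $S^{u,v}_0=0$ this forces $S^{u,v}_n\le0$ for every $n\in\lzb1,N^{2/3}\rzb\cup\lzb-N^{2/3},-1\rzb$. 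On $A_{o,d}\cap B_{\rim{o},d}$ the sandwich \eqref{sw5} gives $S_n\le S^{u,v}_n$ for $n\ge1$ and $S'_n\le S^{u,v}_n$ for $n\le-1$, so
\[
A_{o,d}\cap B_{\rim{o},d}\cap\bigcup_{u,v}\eventcB^{u,v}\ \subseteq\ \Bigl\{\max_{1\le n\le N^{2/3}}S_n\le0\Bigr\}\cap\Bigl\{\max_{-N^{2/3}\le n\le-1}S'_n\le0\Bigr\}.
\]
By the independence statement in Lemma \ref{lm:sw} the two events on the right are independent, so the remaining term is at most the product of their probabilities.

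It then remains to bound each factor by $C(\e)N^{-1/5}$. By Theorem \ref{th:st-lpp}(ii) the steps $Y_j=J^{\rhoup{\rho}}_j-\rim{J}^{\rhodown{\lambda}}_j$ are i.i.d.\ in $j$, each a difference of independent $\mathrm{Exp}(\rhoup{\rho})$ and $\mathrm{Exp}(\rhodown{\lambda})$ variables, with all exponential moments and variance bounded above and below by positive $\e$-dependent constants; the same holds for $Y'_j=J^{\rhodown{\rho}}_j-\rim{J}^{\rhoup{\lambda}}_j$. Their means are $\E[Y_j]=\tfrac1{\rhoup{\rho}}-\tfrac1{\rhodown{\lambda}}$ and $\E[Y'_j]=\tfrac1{\rhodown{\rho}}-\tfrac1{\rhoup{\lambda}}$. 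Recalling $\rhoup{\rho}=\rho(o_c)+rN^{-1/3}$, $\rhodown{\rho}=\rho(o_c)-rN^{-1/3}$, $\rhoup{\lambda}=\rho(\rim{o}_c)+rN^{-1/3}$, $\rhodown{\lambda}=\rho(\rim{o}_c)-rN^{-1/3}$, we get $\rhoup{\rho}-\rhodown{\lambda}=(\rho(o_c)-\rho(\rim{o}_c))+2rN^{-1/3}$ and $\rhoup{\lambda}-\rhodown{\rho}=(\rho(\rim{o}_c)-\rho(o_c))+2rN^{-1/3}$. Both $o_c$ and $-\rim{o}_c$ lie within $\ell^1$-distance $\tfrac12 N^{19/24}$ of $o$ along the boundary, and $\rho(\cdot)$ from \eqref{rho-o} (which satisfies $\rho(-x)=\rho(x)$) is Lipschitz with constant $O_\e(N^{-1})$ along $\partial^{N\!,\,\e}$; hence $|\rho(o_c)-\rho(\rim{o}_c)|\le C(\e)N^{-5/24}$. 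With $2rN^{-1/3}=2N^{-1/5}$ and $\rhoup{\rho},\rhodown{\rho},\rhoup{\lambda},\rhodown{\lambda}$ confined to a fixed compact subinterval of $(0,1)$ depending only on $\e$, this yields $|\E[Y_j]|\le C(\e)N^{-1/5}$ and $|\E[Y'_j]|\le C(\e)N^{-1/5}$. Finally I would invoke the standard fluctuation bound: for a random walk $(W_n)$ with i.i.d.\ non-degenerate increments of mean $\mu$ and a finite exponential moment, $\P(\max_{1\le n\le M}W_n\le0)\le C(|\mu|+M^{-1/2})$, with $C$ depending only on the increment law through its first two moments and tails (e.g.\ via the Sparre Andersen/Spitzer identity or a ladder-epoch decomposition). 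Applying this to $n\mapsto S_n$ on $\lzb1,N^{2/3}\rzb$ (increments $Y_j$) and to $k\mapsto S'_{-k}$ on $\lzb1,N^{2/3}\rzb$ (increments $-Y'_{-k+1}$), with $M=N^{2/3}$ so that $M^{-1/2}=N^{-1/3}\le N^{-1/5}$, gives that each factor is $\le C(\e)N^{-1/5}$. Multiplying the two, adding the two $C_1(\e)N^{-2/5}$ terms from the first paragraph, and enlarging $C(\e)$ to absorb $1\le N<N_0(\e)$, yields the claim.

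The hard part is the last input: the uniform random-walk bound $\P(\max_{1\le n\le M}W_n\le0)\le C(|\mu|+M^{-1/2})$ with constants that degrade only linearly as the drift $\mu\to0$ (this is what converts the tiny $O(N^{-1/5})$ drift into a genuine $O(N^{-1/5})$ probability bound rather than something close to $1$), together with keeping every error constant uniform over all $o\in\partial^{N\!,\,\e}$, which is exactly what the $O_\e(N^{-1})$-Lipschitz control of $\rho(\cdot)$ provides. The union-bound split, the verification of \eqref{lb-0}, and the set-theoretic reduction through \eqref{S-pi} and the sandwich \eqref{sw5} are routine bookkeeping.
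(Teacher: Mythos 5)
Your decomposition, the verification of \eqref{lb-0} for $d=(1,N^{1/8})$ and $r=N^{2/15}$, the reduction through \eqref{S-pi} and the sandwich \eqref{sw5} to the two one-sided walk events, the use of the independence in Lemma \ref{lm:sw}, the Lipschitz control $|\rho(o_c)-\rho(\rim o_c)|\le C(\e)(d_1+d_2)N^{-1/3}$, and the final bookkeeping all match the paper's proof step for step; this is essentially the same argument, down to the $N^{-2/5}+N^{-1/5}\cdot N^{-1/5}$ bottom line.

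The one place where you lean on an unproven input is the ``standard fluctuation bound'' $\P(\max_{1\le n\le M}W_n\le 0)\le C(|\mu|+M^{-1/2})$ with a uniform constant as the drift varies. That estimate is exactly what Lemma \ref{l:rw1} (inequality \eqref{rw34.9}) supplies for the difference-of-exponentials walks appearing here: the $\frac{C}{\sqrt n}\bigl(1-\frac{(\alpha-\beta)^2}{(\alpha+\beta)^2}\bigr)^n$ term gives the $M^{-1/2}$ part and $\frac{\alpha-\beta}{\alpha}$ gives the $O(|\mu|)$ part, with explicit constants from the Catalan-number formula rather than a generic Sparre Andersen/ladder argument. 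So the proposal is correct, but if you want it self-contained you should cite \eqref{rw34.9} directly instead of appealing to a general principle whose uniformity in the increment law would otherwise need to be argued.
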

\begin{proof}
	Fix $o\in \partial^N$. For any $u\in \cI_{o,d},v\in \rim{\cI}_{\rim{o},d}$, by \eqref{S-pi}, 
	\begin{align*}
		\eventcB^{u,v}\subseteq\Big\{\sup_{0 < l\leq N^{{2}/{3}}} S^{u,v}_l<0\Big\}\cap \Big\{\sup_{-N^{{2}/{3}}\leq l<0} S^{u,v}_l<0\Big\}.  
	\end{align*}
	By Lemma \ref{lm:sw}, on the event $A_{o,d}\cap B_{\rim{o},d}$,  
	\begin{align*}
		\bigcup_{u\,\in\,\cI_{o,d}, \,v\,\in\,\rim{\cI}_{\rim{o},d}}\Big\{\sup_{0 < l\leq N^{{2}/{3}}} S^{u,v}_l<0\Big\}&\subseteq\Big\{\sup_{0 < l\leq N^{{2}/{3}}} S_l<0\Big\}\\
	\text{and}\quad 
		\bigcup_{u\,\in\,\cI_{o,d}, \,v\,\in\,\rim{\cI}_{\rim{o},d}}\Big\{\sup_{-N^{{2}/{3}}\le l<0 } S^{u,v}_l<0\Big\}&\subseteq\Big\{\sup_{-N^{{2}/{3}}\le  l<0 } S^{'}_l<0\Big\}. 
	\end{align*}
	Thus on the event $A_{o,d}\cap B_{\rim{o},d}$,  
		\begin{align*}
		\bigcup_{u\,\in\,\cI_{o,d}, \,v\,\in\,\rim{\cI}_{\rim{o},d}}\eventcB^{u,v}\subseteq \Big\{\sup_{0 < l\leq N^{{2}/{3}}} S_l<0\Big\}\cap \Big\{\sup_{-N^{{2}/{3}}\leq l<0} S^{'}_l<0\Big\}.  
	\end{align*}
	By the independence claim of  Lemma \ref{lm:sw}, 
	\begin{align}\label{rw ub}
		\P\Big(\bigcup_{u\,\in\,\cI_{o,d}, \,v\,\in\,\rim{\cI}_{\rim{o},d}}\eventcB^{u,v}\Big) \leq \P\Big(\sup_{0 < l\leq N^{{2}/{3}}} S_l<0\Big) \P\Big(\sup_{-N^{{2}/{3}}\leq l<0} S^{'}_l<0\Big)+\P\big(A_{o,d}^c\cup B_{\rim{o},d}^c\big).
	\end{align}
	
	Let $\rho=\rho(o_c)$ and $\lambda=\rho(\rim{o}_c)$. Since $\rho(\rim{o})=\rho(o)$,  there is a constant $C(\e)$ such that, for $N\ge 1$, 
	\be\label{681}  \abs{\rho-\lambda} \leq C(\e)(d_2+d_1)N^{-\frac{1}{3}}\le C(\e) N^{-5/24}. 
	\ee 
	Each step of the random walk $S$ on $\lzb 1,N^{\frac{2}{3}}\rzb$ is the difference of independent exponential random variables with parameters $\rhoup{\rho}= \rho+rN^{-\frac{1}{3}}$ and $\rhodown{\lambda}= \lambda-rN^{-\frac{1}{3}}$. Similarly, each step of the random walk $S^{'}$ on $\lzb-N^{{2}/{3}},-1\rzb $ is the difference of independent exponential random variables with parameters   $\rhodown{\rho}= \rho-rN^{-\frac{1}{3}}$ and  $\rhoup{\lambda}= \lambda+rN^{-\frac{1}{3}}$. 
Take $r=N^\frac{2}{15}$.  Then for   $N\ge N_0(\e)$, we have $\rhoup{\rho}>\rhodown{\lambda}$.    (By \eqref{681} we can take  $N_0(\e)=C(\e)^{120}$.) 
Inequality   \eqref{rw34.9} with $\alpha=\rhoup{\rho}$ and $\beta=\rhodown{\lambda}$ gives the bound 
\be\label{683} \begin{aligned}
	\P\bigg(\sup_{0 < l\leq N^{{2}/{3}}} S_l<0\bigg)&\leq \frac{C}{N^\frac{1}{3}}\bigg(1-\biggl(\frac{\rho-\lambda+2rN^{-\frac{1}{3}}}{\rho+\lambda}\biggr)^2\;\bigg)^{N^{{2}/{3}}}+\; \frac{\rho-\lambda+2rN^{-\frac{1}{3}}}{\rho+rN^{-\frac{1}{3}}}\\
	&\leq \frac{C}{N^\frac{1}{3}}\bigg(1-\biggl(\frac{\rho-\lambda+2rN^{-\frac{1}{3}}}{\rho+\lambda}\biggr)^2\;\bigg)^{N^{{2}/{3}}}+\; C(d_1+d_2+r)N^{-\frac{1}{3}}.
\end{aligned}\ee
With  $r=N^\frac{2}{15}$ and $d=(d_1, d_2)=(1,N^\frac{1}{8})$, the last line is dominated by the last term.  Thus  there is a constant $C_3(\e)>0$ not depending on $o$, such that 
\begin{align}\label{rw ub2}
	\P\Big(\sup_{0 < l\leq N^{{2}/{3}}} S_l<0\Big) \leq C_3N^{-\frac{1}{5}}.
\end{align}
Similarly one shows that
\begin{align}\label{rw ub3}
	\P\Big(\sup_{-N^{{2}/{3}}\leq l<0} S^{'}_l<0\Big)\leq C_3N^{-\frac{1}{5}}.
\end{align}
With  $r=N^\frac{2}{15}$,  \eqref{Aod2} and \eqref{Bod2} give for $N\ge N_0(\e)$ 
\begin{align}\label{ub 3}
	\P\big(A_{o,d}^c\cup B_{\rim{o},d}^c\big)\leq  Cr^{-3}   = CN^{-\frac{2}{5}}.
\end{align}
To complete the proof,  
substitute \eqref{rw ub2}, \eqref{rw ub3} and \eqref{ub 3} into \eqref{rw ub}.
\end{proof}

\begin{remark}\label{rm:loss}  In the proof above we can observe where the optimal estimate is lost.   Namely, if the probability $\P\big(A_{o,d}^c\cup B_{\rim{o},d}^c\big)$ could be ignored in \eqref{rw ub}, we could take $r$ and $d_2$ to be constants. This would result in the bound $C_3N^{-1/3}$ in \eqref{rw ub2} and \eqref{rw ub3}.  The end result would be an upper bound of order $N^{-2/3}$ on the probability that two opposite blocks of size $N^{2/3}$ are connected by a geodesic through the origin.   Since geodesics fluctuate on the scale $N^{2/3}$, this is the expected order. 
\end{remark} 

  Let $o\in \partial^N$ and   $\rim{o}=-o$ be as before above Lemma \ref{lm:clo}.  For $d=(d_1, d_2)$ set 
\begin{align}
\rim{\cF}_{\rim{o},d}=\{v\in \rim{\partial}^N: \abs{\rim{o}-v}_1>\tfrac12{d_2}N^{\frac{2}{3}}\}
\end{align}

 \begin{figure}
	\begin{subfigure}{.5\textwidth}
	\begin{tikzpicture}[scale=0.5, every node/.style={transform shape}]
	\draw  (0,0) rectangle (10,10);
	\draw [dashed][line width=0.01cm] (0,5) -- (10,5);
	\draw [dashed][line width=0.01cm] (5,0) -- (5,10);
	\foreach \x in {0,...,6}
	{   \draw [fill] (0,\x*1/5) circle [radius=0.07];
	};
	\foreach \x in {0,...,3}
	{   \draw [fill] (\x*1/5,0) circle [radius=0.07];
	};
	\node [scale=2][above] at (1.2,0.3) {$\cI_{o,d}$};
	\node [scale=2][above][red] at (-0.5,-0.7) {$o_c$};
	\node [scale=2][above][cyan] at (-0.5,0.3) {$o$};
	\draw [fill] (0,3/5) circle [radius=0.07][cyan];
	\draw [fill] (0,0) circle [radius=0.07][red];
	\draw [dashed][line width=0.01cm] (0,1/5) -- (0,3/5);
	\foreach \x in {0,...,8}
	{   \draw [fill] (10,10-\x*1/5) circle [radius=0.07];
	};
	\foreach \x in {0,...,8}
	{   \draw [fill] (10-\x*1/5,10) circle [radius=0.07];
	};
	\draw [fill] (10,10) circle [radius=0.07][red];
	\draw [fill] (10,9.4) circle [radius=0.07][cyan];
	\node [scale=2][above][red] at (10,10) {$\rim{o}_c$};
	\node [scale=2][above][cyan] at (10.5,9) {$\rim{o}$};
	\node [scale=2][above] at (8.9,8) {$\rim{\cI}_{o,d}$};
	\node [scale=2][above][blue] at (6.8,8.2) {$\rim{\cF}^1_{\rim{o},d}$};
	\node [scale=2][above][blue] at (8.8,5.6){$\rim{\cF}^2_{\rim{o},d}$}; 
	
	\foreach \x in {9,...,23}
	{   \draw [fill] (10-\x*1/5,10) circle [radius=0.07][blue];
	};
	\foreach \x in {9,...,23}
	{   \draw [fill] (10,10-\x*1/5) circle [radius=0.07][blue];
	};
	\end{tikzpicture}
\end{subfigure}%
	\begin{subfigure}{.5\textwidth}
	\begin{tikzpicture}[scale=0.5, every node/.style={transform shape}]
	\draw  (0,0) rectangle (10,10);
	\draw [dashed][line width=0.01cm] (0,5) -- (10,5);
	\draw [dashed][line width=0.01cm] (5,0) -- (5,10);
	\foreach \x in {1,...,9}
	{   \draw [fill] (0,2+\x*1/5) circle [radius=0.07];
	};
	\node [scale=2][above] at (1.2,2) {$\cI_{o,d}$};
	\node [scale=2][above][red] at (-0.6,1.7) {$o_c$};
	\node [scale=2][above][cyan] at (-0.6,2.6) {$o$};
	\draw [fill] (0,3) circle [radius=0.07][cyan];
	\draw [fill] (0,2.2) circle [radius=0.07][red];
	\draw [dashed][line width=0.01cm] (0,1/5) -- (0,3/5);
	\foreach \x in {1,...,9}
	{   \draw [fill] (10,6+\x*1/5) circle [radius=0.07];
	};
	\draw [fill] (10,7.8) circle [radius=0.07][red];
	\draw [fill] (10,7) circle [radius=0.07][cyan];
	\node [scale=2][above][red] at (10.6,7.3) {$\rim{o}_c$};
	\node [scale=2][above][cyan] at (10.6,6.5) {$\rim{o}$};
	\node [scale=2][above] at (9,6.5) {$\rim{\cI}_{o,d}$};
	\node [scale=2][above][blue] at (7.2,8.2) {$\rim{\cF}^1_{\rim{o},d}$};
	\node [scale=2][above][blue] at (8.8,4.8) {$\rim{\cF}^2_{\rim{o},d}$};
	\foreach \x in {0,...,23}
	{   \draw [fill] (10-\x*1/5,10) circle [radius=0.07][blue];
	};
	\foreach \x in {1,...,4}
	{   \draw [fill] (10,5.2+\x*1/5) circle [radius=0.07][blue];
	};
	\foreach \x in {1,...,10}
	{   \draw [fill] (10,10-\x*1/5) circle [radius=0.07][blue];
	};		
	\end{tikzpicture}
\end{subfigure}
\caption{\small The square  $\lzb-N,N\rzb^2$ with  two possible arrangements of the segments ${\cI}_{o,d}$, $\rim{\cI}_{o,d}$ and $\rim{\cF}_{\rim{o},d}=\rim{\cF}^1_{\rim{o},d}\cup\rim{\cF}^2_{\rim{o},d}$ on the boundary of the square. In both cases $\rim{o}=-o$.}  
\label{fig:points}
  \end{figure}

\begin{lemma} \label{lm:far}
	Let $d=(1,N^\frac{1}{8})$. There are finite constants $C(\e)$ and $N_0(\e)$ such that, for any   $N\ge N_0(\e)$ and  $o\in \partial^N$,  
	\be\label{far7} 
			\P\Big(	\bigcup_{u\,\in\,\cI_{o,d},\,v\,\in\,\rim{\cF}_{\rim{o},d}} \eventcB^{u,v}\Big)\leq C(\e)N^{-\frac{3}{8}}
	\ee
\end{lemma}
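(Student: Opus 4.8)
Here is how I would approach the proof of Lemma \ref{lm:far}.

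The plan is to show that a geodesic joining a point of $\cI_{o,d}$ through the origin to a point of $\rim{\cF}_{\rim{o},d}$ must miss the straight segment between its endpoints by a distance of order $N^{19/24}=\tfrac12 d_2N^{2/3}$ -- a factor $N^{1/8}$ beyond the $N^{2/3}$ KPZ wandering scale -- and then to invoke Lemma \ref{lm:geodev}, whose $r^{-3}$ decay yields precisely the target $N^{-3/8}$. Split $\rim{\cF}_{\rim{o},d}=\rim{\cF}^1_{\rim{o},d}\cup\rim{\cF}^2_{\rim{o},d}$ into the two sub-arcs of $\rim{\partial}^N$ on either side of $\rim{\cI}_{\rim{o},d}$ (Figure \ref{fig:points}) and bound the probability of $\bigcup_{u\in\cI_{o,d},\,v\in\rim{\cF}^1_{\rim{o},d}}\eventcB^{u,v}$ and of the analogue for $\rim{\cF}^2$ separately; a final factor $2$ is harmless. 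For a path from $u$ with $u_1<0$ to $v$ with $v_1>0$ write $H^u(v)$ for the unique height at which it crosses the $y$-axis, so $\eventcB^{u,v}=\{H^u(v)=0\}$. First I would collapse the union by planarity: since geodesics do not cross (uniqueness of finite geodesics, as used in the proof of Lemma \ref{lem-lb1}), $H^u(v)$ is monotone as $v$ runs along $\rim{\partial}^N$ and as $u$ runs along $\partial^N$; hence, with $v^*\in\rim{\cF}^1_{\rim{o},d}$ the point nearest $\rim{\cI}_{\rim{o},d}$ (so $|v^*-\rim{o}|_1$ is within $O(1)$ of $\tfrac12 N^{19/24}$) and $u^*\in\cI_{o,d}$ the corresponding extreme point,
\[ \bigcup_{u\in\cI_{o,d},\,v\in\rim{\cF}^1_{\rim{o},d}}\eventcB^{u,v}\ \subseteq\ \{H^{u^*}(v^*)\le 0\}, \]
and symmetrically $\bigcup_{u,\,v\in\rim{\cF}^2}\eventcB^{u,v}\subseteq\{H^{u^{**}}(v^{**})\ge 0\}$, collapsing a union of order $N^{5/3}$ events to a single one.

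The second ingredient is an elementary geometric fact: for $N\ge N_0(\e)$, uniformly over $o\in\partial^N$, the segment $[u^*,v^*]$ meets the $y$-axis at a \emph{positive} height that is at least $c(\e)N^{19/24}$, while $[u^{**},v^{**}]$ meets the line $\{x_1=1\}$ at a height $\le-c(\e)N^{19/24}$. Indeed, writing $o=-(aN,bN)$ with $a\vee b=1$ and $a\wedge b\ge\e$, we have $\rim{o}=-o$, $|u^*-o|_1\le\tfrac12 d_1N^{2/3}=\tfrac12 N^{2/3}$, and $v^*$ is displaced from $\rim{o}$ by order $N^{19/24}$ toward the $\rim{\cF}^1$-side, which makes $[u^*,v^*]$ strictly steeper than the line $[o,\rim{o}]$ through the origin, so it passes above the origin by order $N^{19/24}$ -- and symmetrically $v^{**}$ makes $[u^{**},v^{**}]$ flatter than $[o,\rim{o}]$. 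A brief case check over which side of the square carries $o$ and $v^*$, using $a\wedge b\ge\e$, makes the constant $c(\e)$ uniform in $o$.

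Now apply Lemma \ref{lm:geodev}. On $\{H^{u^*}(v^*)\le 0\}$ the geodesic $\pi^{u^*,v^*}$ departs the $y$-axis at height $\le 0$, so its entire vertical run at $x_1=0$ lies at height $\le 0$ -- at least $c(\e)N^{19/24}$ below $[u^*,v^*]$ there. Translating $u^*$ to the origin turns $\pi^{u^*,v^*}$ into $\pi^{0,\,N''\xi'}$, where $N''=|v^*-u^*|_1$ is of order $N$ and $\xi'=(v^*-u^*)/N''\in\,]e_2,e_1[$, and turns the $y$-axis into the column $\{x_1=-u^*_1\}$. Since $v^*$ lies within $d_2N^{2/3}$ of $-o$ and $u^*$ within $N^{2/3}$ of $o$, $\xi'$ differs from $\xi(o)$ by $O(N^{-5/24})$; as $\xi_1(o)\in[\tfrac{\e}{1+\e},\tfrac1{1+\e}]$ this places $\xi'_1\in[\tfrac{\e/2}{1+\e/2},\tfrac1{1+\e/2}]$ and $-u^*_1\in\lzb\tfrac14 N''\xi'_1,\tfrac34 N''\xi'_1\rzb$, uniformly in $o$, once $N\ge N_0(\e)$. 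Choosing $r=c_1(\e)N^{1/8}$ small enough that $r(N'')^{2/3}<c(\e)N^{19/24}$ (hence automatically $1\le r<\tfrac{\sqrt{\e/2}}{2(1+\sqrt{\e/2})}(N'')^{1/3}$ for large $N$), the above deviation forces $\pi^{0,\,N''\xi'}\cap\cC_{-u^*_1,\,r}^{\xi'}=\varnothing$, so Lemma \ref{lm:geodev} (with $\delta=\tfrac14$) bounds $\P\{H^{u^*}(v^*)\le 0\}$ by $C(\e)r^{-3}=C(\e)N^{-3/8}$. For $\rim{\cF}^2$ run the same argument at the column $x_1=1$: on $\{H^{u^{**}}(v^{**})\ge 0\}$ the geodesic arrives at $x_1=1$ at height $\ge 0$, so its entire run there lies at height $\ge 0$, at least $c(\e)N^{19/24}$ above $[u^{**},v^{**}]$, and Lemma \ref{lm:geodev} again gives $C(\e)N^{-3/8}$. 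Adding the two arcs yields \eqref{far7} for $N\ge N_0(\e)$, and enlarging $C(\e)$ handles smaller $N$.

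The step I expect to be the main obstacle is keeping everything uniform in $o\in\partial^N$ as $\cI_{o,d}$ and its far region slide around the corners of the square: the geometric fact, the planar monotonicity reduction, and the verification of the hypotheses of Lemma \ref{lm:geodev} are each routine in any fixed configuration, but all three must produce constants depending only on $\e$ and deliver the ratio $N^{19/24}/N^{2/3}=N^{1/8}$ with no loss, so that the exponent $-\tfrac38$ survives -- and so that, after the union over the order-$N^{1/3}$ blocks in Theorem \ref{th:ub77}, one is still left with a negative power of $N$.
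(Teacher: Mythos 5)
Your proof is correct and takes essentially the same route as the paper's: split $\rim{\cF}_{\rim{o},d}$ into the two sub-arcs on either side of $\rim{\cI}_{\rim{o},d}$, use planar ordering of geodesics to reduce the union to a single extreme pair of endpoints (the paper's $h^i$, $q^i$), establish the geometric fact that the chord between these extreme points clears the origin by order $d_2N^{2/3}=N^{19/24}$, and conclude via Lemma \ref{lm:geodev}. Your only organizational difference is collapsing the union of events to a single event $\{H^{u^*}(v^*)\le 0\}$ up front, whereas the paper shows directly that any $\eventcB^{u,v}$ in the union forces the boundary geodesic $\pi^{h^i,q^i}$ into a deviation event $\eventcD^{h^i,q^i}_{m,C_1(\e)d_2N^{2/3}}$ — but these are the same underlying argument, and you also make more explicit the translation needed to match the hypotheses of Lemma \ref{lm:geodev}.
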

\begin{proof}
	Define the   sets of boundary points
	\begin{align}
		&\partial \rim{\cF}_{\rim{o},d}=\{v\in \rim{\cF}_{\rim{o},d}:\exists u\in \rim{\cI}_{\rim{o},d} \text{ such that }  v\sim u\}\\
		&\partial \cI_{o,d}=\{v\in \cI_{o,d} :\exists u\in \partial^N\setminus \cI_{o,d}  \text{ such that }  v\sim u\},
	\end{align}
	where   $v\sim u$ means  that $v$ and $u$ are adjacent in the graph $\Z^2$.  Their cardinalities satisfy $1\le |\partial \rim{\cF}_{\rim{o},d}|\leq   |\partial \cI_{o,d}| \leq 2$.  (For example,  $\partial \rim{\cF}_{\rim{o},d}$ is a singleton if $\rim{\cI}_{\rim{o},d}$ contains one of the endpoints $(N, \fl{\e N})$ or $(\fl{\e N}, N)$  of $\rim{\partial}^N$.) 
We denote  the points of $\partial \rim{\cF}_{\rim{o},d}$ by $q^1,q^2$  and those of $\partial \cI_{o,d}$  by $h^1,h^2$, 
labeled so that these inequalities are satisfied:
\begin{align*}
		&q^1_1\leq \rim{o}_1\le q^2_1, \quad q^1_2\geq \rim{o}_2\ge  q^2_2\\
		&h^1_1\geq o_1 \ge h^2_1, \quad h^1_2\leq o_2 \le  h^2_2. 
	\end{align*}
Geometrically,  starting from the north pole $(0,N)$ and  traversing the boundary of the square $\lzb-N,N\rzb^2$ clockwise, we meet the points (those that exist) in this order:  $q^1\to \rim{o}\to q^2\to h^1\to o\to h^2$ (Figure \ref{fig:bigdev}). 

	
	
	For points $u\in \partial^N,v\in \rim{\partial}^N$  let 
	\begin{align*}
		\mathcal{P}_m^{u,v}=\pi^{u,v}\cap \big\{x\in \Z^2:x_1=m\big\} 
	\end{align*}
	be the   intersection of the geodesic $\pi^{u,v}$ with the vertical line at $x_1=m$.
	For $t>0$  let 
	\be\label{Duv} 
		\eventcD^{u,v}_{m,t}=\Big\{\inf_{p=(p_1, \,p_2)\,\in\, \mathcal{P}_m^{u,v}}\Big| u_2+ \frac{v_2-u_2}{v_1-u_1}(m-u_1)-p_2\Big|>t\Big\}.
	\ee
	be  the event that along this vertical line    the geodesic $\pi^{u,v}$ deviates by distance  at least  $t$ from the straight line segment from $u$ to $v$.   We now show that the event in \eqref{far7} implies that one of the geodesics $\pi^{\,h^i\!, \,q^i}$  deviates by at least order $d_2N^{2/3}$ from the straight line segment $[h^i, q^i]$. 
	
	
	For $o\in \partial^N$,  $u\in \partial \cI_{o,d}$ and $v\in\partial \rim{\cF}_{\rim{o},d} $  decompose as  $u=o+e^u$ and $v=\rim{o}+e^v$.  These vectors satisfy  
	\be\label{e^uv}   \abs{e^u}_1\leq \tfrac12d_1N^\frac{2}{3}, \quad  \abs{e^v}_1\geq \tfrac12d_2N^\frac{2}{3}, \quad  \abs{e^v_1}\vee \abs{e^v_2}\leq 2(1-\e)N, \quad\text{and}\quad e^v_1e^v_2\leq 0. \ee  
	
	   $\rim{\cF}_{\rim{o},d}$ is the union of two disjoint pieces separated by $\rim{\cI}_{\rim{o},d}$, one of which can be empty.   $\rim{\cF}_{\rim{o},d}^1$ is to the left and above $\rim{\cI}_{\rim{o},d}$ separated from $\rim{\cI}_{\rim{o},d}$ by the point $q^1$.  $\rim{\cF}_{\rim{o},d}^2$ is  to the right and below $\rim{\cI}_{\rim{o},d}$ separated from $\rim{\cI}_{\rim{o},d}$ by the point $q^2$. They  can be expressed  as follows: 
	\begin{align*}
	\rim{\cF}_{\rim{o},d}^1=\{ v\in \rim{\cF}_{\rim{o},d}:  e^v_1\le 0\le e^v_2\}  
	\quad\text{and}\qquad \rim{\cF}_{\rim{o},d}^2=\{ v\in \rim{\cF}_{\rim{o},d}:  e^v_2\le 0\le e^v_1\}. 
	\end{align*} 
	
	Decompose the point appearing  in \eqref{Duv} suitably, 
	  using  $v_i-u_i= \rim{o}_i+e^v_i-(o_i+e_i^u) = -2o_i +e^v_i- e_i^u$. 
	\begin{align}
		 u_2+ \frac{v_2-u_2}{v_1-u_1} (m-u_1) 
		 &=o_2- \frac{v_2-u_2}{v_1-u_1}o_1 +e^u_2  + \frac{v_2-u_2}{v_1-u_1} (m-e^u_1) \nn \\[4pt] 
	\label{234} 	&= \frac{o_2e^v_1-o_1e^v_2}{v_1-u_1}  - \frac{o_2e^u_1-o_1e^u_2}{v_1-u_1} +e^u_2  + \frac{v_2-u_2}{v_1-u_1} (m-e^u_1) . 
		 \end{align} 	
		 The first term on the last line is of order $\Theta(d_2N^{2/3})$ because  there is no cancellation in the numerator.    It   is positive if $v\in\rim{\cF}_{\rim{o},d}^1$ and negative if $v\in\rim{\cF}_{\rim{o},d}^2$.   This term   dominates because $d_2>>d_1$.

 From the calculation above we bound  signed vertical distances from  the  $x$-axis to the line segment $[u,v]$.  In addition to \eqref{e^uv}, we utilize $\;-N\le o_i\le -\e N$,  $2N\e\le v_i-u_i\le 2N$ and   the slope bound $\e\le \frac{v_2-u_2}{v_1-u_1}\le \e^{-1}$. 
 
First for $u\in\cI_{o,d}$ and  $v\in \rim{\cF}_{\rim{o},d}^1$ we bound below the positive distance from the origin to $[u,v]$ so we take $m=0$. The $e^u$-terms on line \eqref{234} are collected together into a single error term. 
\be\label{237-1} \begin{aligned}
		 u_2+ \frac{v_2-u_2}{v_1-u_1}(-u_1) 
		 &\ge  \frac{\e N  \abs{e^v}_1}{ 2N}    - \Bigl(   \frac{N}{2N\e} +1+\e^{-1}\Bigr) \abs{e^u}_1  \\[4pt] 
		 &\ge    \tfrac14\e d_2N^\frac{2}{3}   - 2\e^{-1}  d_1N^\frac{2}{3}  
		 \ge  \tfrac1{8}\e d_2N^\frac{2}{3}. 
\end{aligned} \ee
In the last inequality we used $(d_1, d_2)=(1,N^{1/8})$ and took $N\ge (16\e^{-2})^8$.

For  $u\in\cI_{o,d}$ and  $v\in \rim{\cF}_{\rim{o},d}^2$ we  bound above the negative  distance from the point $(1,0)$  to $[u,v]$ and hence take $m=1$: 
\be\label{237-2} \begin{aligned}
		 u_2+ \frac{v_2-u_2}{v_1-u_1}(1-u_1)  
		 &\le - \,\frac{\e N  \abs{e^v}_1}{ 2N}    + \Bigl(   \frac{N}{2N\e} +1+\e^{-1}\Bigr) \abs{e^u}_1 +\e^{-1} \\[4pt] 
		 &
		 \le   - \tfrac14\e d_2N^\frac{2}{3}   + 3\e^{-1}  d_1N^\frac{2}{3}\le - \tfrac1{8}\e d_2N^\frac{2}{3} . 
\end{aligned}\ee

Now suppose that for some $u\in\cI_{o,d}$ and  $v\in \rim{\cF}_{\rim{o},d}$ the geodesic $\pi^{u,v}$ goes through the edge $((0,0),(1,0))$.   We have two cases. 
\begin{enumerate} [{(i)}]  \itemsep=4pt

\item 
If $v\in \rim{\cF}_{\rim{o},d}^1$,  then the  geodesic $\pi^{\,h^1\!,\,q^1}$ stays below and to the right of $\pi^{u,v}$ because both its endpoints are below and to the right of the endpoints of $\pi^{u,v}$.  Then \eqref{237-1} with $u=h^1$ and $v=q^1$  shows that at $x$-coordinate $x=0$ the  geodesic $\pi^{\,h^1\!,\,q^1}$ deviates from the straight line segment $[h^1, q^1]$ by at least $\tfrac1{8}\e d_2N^\frac{2}{3}$.   This case is illustrated in Figure \ref{fig:bigdev}. 

\item If $v\in \rim{\cF}_{\rim{o},d}^2$,  then the  geodesic $\pi^{\,h^2\!,\,q^2}$ stays above  and to the left of $\pi^{u,v}$.  Now \eqref{237-2} with $u=h^2$ and $v=q^2$  shows that at $x$-coordinate $x=1$ the  geodesic $\pi^{\,h^2\!,\,q^2}$ deviates from the straight line segment $[h^2, q^2]$ by at least $\tfrac1{8}\e d_2N^\frac{2}{3}$. 
\end{enumerate}

\vspace{3pt} 

	Put  cases (i) and (ii)  together and  apply Lemma \ref{lm:geodev}: 
		\begin{align}
		\P\bigg(\;\bigcup_{u\,\in\,\cI_{o,d},\,v\,\in\,\rim{\cF}_{\rim{o},d}} \eventcB^{u,v}\bigg) \le 
		\P\bigl(  \eventcD^{h^1\!,\,q^1}_{0,\, C_1(\e)d_2N^{{2}/{3}}}   \cup \eventcD^{h^2\!,\,q^2}_{1,\, C_1(\e)d_2N^{{2}/{3}}}   \bigr) 
		\leq   C(\e)d_2^{-3} = C(\e)N^{-\frac{3}{8}}. 
	\end{align}
	The proof is complete.
\end{proof}

\begin{figure}
\includegraphics[width=8.3cm]{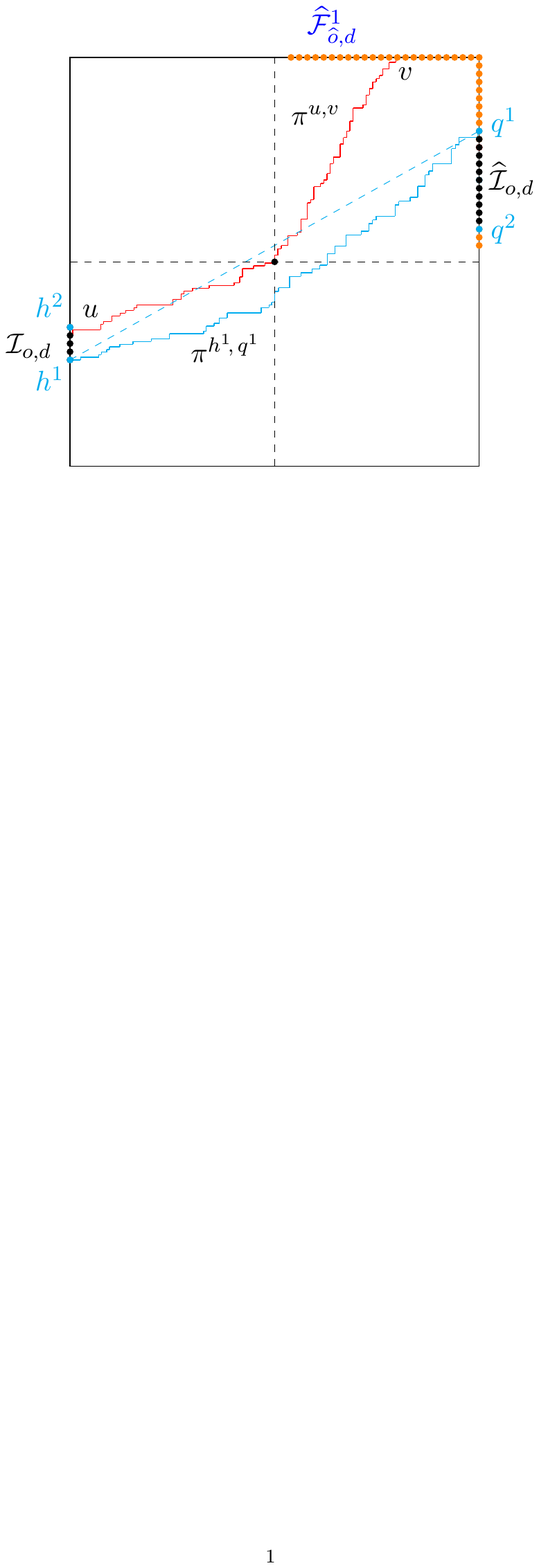} 
\caption{\small Case (i) in the proof of Lemma \ref{lm:far}.  The  geodesic $\pi^{u,v}$ connects  $\cI_{o,d}$ and   $\rim{\cF}^1_{\rim{o},d}$   through the edge $((0,0),(1,0))$.    The  geodesic $\pi^{\,h^1\!, \,q^1}$  lies below $\pi^{u,v}$ and hence well below the $[h^1, q^1]$ line segment (dashed line).}
\label{fig:bigdev}
\end{figure}

\begin{lemma}\label{lm:ub43}
 	There is a constant $C(\e)$ such that for any  $o\in \partial^N$,  
 	\begin{align}
 	\P\Big(	\bigcup_{u\,\in\,\cI_{o,d}, \,v\,\in\,\rim{\partial}^N} \eventcB^{u,v}\Big)\leq C(\e)N^{-\frac{3}{8}}
 	\end{align}
\end{lemma}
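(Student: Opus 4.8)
The plan is to reduce Lemma \ref{lm:ub43} to the two estimates already in hand, Lemmas \ref{lm:clo} and \ref{lm:far}, by partitioning the northeast boundary into the block near $\rim{o}=-o$ and its complement. First I would fix $o\in\partial^N$, set $\rim{o}=-o\in\rim{\partial}^N$, and keep the coarse-graining parameters $d=(1,N^{1/8})$ and $r=N^{2/15}$ used in Lemma \ref{lm:clo}. By construction, every $v\in\rim{\partial}^N$ satisfies exactly one of $\abs{v-\rim{o}}_1\le\tfrac12 d_2N^{2/3}$ or $\abs{v-\rim{o}}_1>\tfrac12 d_2N^{2/3}$, so $\rim{\partial}^N=\rim{\cI}_{\rim{o},d}\,\cup\,\rim{\cF}_{\rim{o},d}$ is a disjoint union covering all of $\rim{\partial}^N$.

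Then I would write
\[
\bigcup_{u\,\in\,\cI_{o,d},\ v\,\in\,\rim{\partial}^N}\eventcB^{u,v}
=\Bigl(\bigcup_{u\,\in\,\cI_{o,d},\ v\,\in\,\rim{\cI}_{\rim{o},d}}\eventcB^{u,v}\Bigr)\,\cup\,\Bigl(\bigcup_{u\,\in\,\cI_{o,d},\ v\,\in\,\rim{\cF}_{\rim{o},d}}\eventcB^{u,v}\Bigr)
\]
and apply a union bound. Lemma \ref{lm:clo} bounds the probability of the first event by $C(\e)N^{-2/5}$ and Lemma \ref{lm:far} bounds that of the second by $C(\e)N^{-3/8}$, so for $N\ge N_0(\e)$ (taken as the larger of the two thresholds appearing in those lemmas),
\[
\P\Bigl(\bigcup_{u\,\in\,\cI_{o,d},\ v\,\in\,\rim{\partial}^N}\eventcB^{u,v}\Bigr)\le C(\e)N^{-2/5}+C(\e)N^{-3/8}\le C(\e)N^{-3/8},
\]
where the last inequality uses $\tfrac25>\tfrac38$ so that the first term is absorbed into the second. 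For the finitely many $N<N_0(\e)$ one simply enlarges $C(\e)$ so that $C(\e)N^{-3/8}\ge1$, which makes the bound hold trivially there as well.

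I do not expect a real obstacle here: the argument is pure bookkeeping layered on top of Lemmas \ref{lm:clo} and \ref{lm:far}. The only points to verify are that the common choice $d=(1,N^{1/8})$ (and $r=N^{2/15}$) is admissible in the hypotheses of both lemmas, that $\rim{\cI}_{\rim{o},d}$ and $\rim{\cF}_{\rim{o},d}$ genuinely partition $\rim{\partial}^N$, and that the worse of the two resulting exponents is $3/8$ rather than $2/5$. All the quantitative work behind the two contributions was already carried out in establishing those lemmas.
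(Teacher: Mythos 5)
Your proof is correct and is essentially the same as the paper's: you split $\rim{\partial}^N=\rim{\cI}_{\rim{o},d}\cup\rim{\cF}_{\rim{o},d}$, apply a union bound, and invoke Lemmas \ref{lm:clo} and \ref{lm:far}, noting that $N^{-3/8}$ dominates $N^{-2/5}$. The paper states this in one line; your version just spells out the same bookkeeping more explicitly.
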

\begin{proof}
     Since $\rim{\partial}^N=\rim{\cI}_{\rim{o},d}\cup \rim{\cF}_{\rim{o},d}$, 
     \begin{align}
     \P\Big(	\bigcup_{u\,\in\,\cI_{o,d}, \,v\,\in\,\rim{\partial}^N} \eventcB^{u,v}\Big)\leq \P\Big(\bigcup_{u\,\in\,\cI_{o,d},\,v\,\in\,\rim{\cF}_{\rim{o},d}} \eventcB^{u,v}\Big)+\P\Big(	\bigcup_{u\,\in\,\cI_{o,d}, \,v\,\in\,\rim{\cI}_{\rim{o},d}}\eventcB^{u,v}\Big)
     \end{align}
     and  Lemmas \ref{lm:clo} and   \ref{lm:far} give the claimed bound. 
\end{proof}

We come to the final step of the proof that   geodesics that connect $\partial^N$ and $\rim{\partial}^N$ through the origin are rare.  
 Recall the event $W_{N\!,\,\e}$ defined in \eqref{WNe}. 

\begin{proof}[Proof of Theorem \ref{th:ub77}] 
A geodesic through the origin takes after that either an $e_1$ or an $e_2$ step.  By symmetry it suffices to control only one case.  We prove 
\be\label{ub43} 
			\P\Big(	\bigcup_{u\,\in\,\partial^N,\,v\,\in\,\rim{\partial}^N} \eventcB^{u,v}\Big)\leq C(\e)N^{-\frac{1}{24}}
	\ee 
for the event  $\eventcB^{u,v}$ 	defined in \eqref{geo-zero}. 
As before,  $d=(1,N^\frac{1}{8})$.   To coarse graine  $\partial^N$ let 
 \begin{align*}
	\mathcal{O}^N=  \partial^N \cap \Big(\big\{(-N +id_1\fl{N^{\frac{2}{3}}}\,,-N)\big\}_{i\in\Z_{\ge0}} \; \bigcup  \,\big\{(-N,-N+jd_1\fl{N^{\frac{2}{3}}})\big\}_{j\in\Z_{\ge0}}\Big).
\end{align*}
	Then decompose 
	\begin{align*}
			\bigcup_{u\,\in\,\partial^N,\,v\,\in\,\rim{\partial}^N} \eventcB^{u,v} \subseteq \bigcup_{o\,\in\, \mathcal{O}^N} 	\bigcup_{u\,\in\,\cI_{o,d},\,v\,\in\, \rim{\partial}^N} \eventcB^{u,v}.
	\end{align*}
	As $|\mathcal{O}^N|\leq C(\e) d_1^{-1}N^{1-\frac{2}{3}}=C(\e)N^\frac{1}{3}$, a  union bound and Lemma \ref{lm:ub43} give \eqref{ub43}: 
	\[ 
	 	\P\Big(\bigcup_{u\,\in\,\partial^N,\,v\,\in\,\rim{\partial}^N} \eventcB^{u,v}\Big) \leq \sum_{o\,\in\,\mathcal{O}^N} \P\Big(	\bigcup_{u\,\in\,\cI_{o,d}, \,v\,\in\,\rim{\partial}^N} \eventcB^{u,v}\Big)\leq C(\e)N^{\frac{1}{3}}N^{-\frac{3}{8}}=C(\e)N^{-\frac{1}{24}}.
		\qedhere 
	\] 
\end{proof}
 
 \medskip 
 
\section{No nontrivial axis-directed  geodesic} 
\label{s:no-axis}

First we complete the proof  of  Theorem \ref{th:no-axis} with the lemma below and then prove the lemma. 

\begin{lemma}\label{u:lm7} 
Let $\eta_k=(\eta_{k,1}, 1-\eta_{k,1})\in\,]e_2, e_1[$ be a monotone sequence of directions such that $\eta_{1,1} <\eta_{2,1}<\dotsm<\eta_{k,1}<\dotsm$ and  $\lim_{k\to\infty} \eta_k= e_1$.   Let  $w_{n,k}=w(n,k) = (\fl{n\eta_{k,1}}, n-\fl{n\eta_{k,1}})\in\Z_{>0}^2$ be lattice points such that $\lim_{n\to\infty}n^{-1} w_{n,k}=\eta_k$ for each $k$.  Then 
\be\label{u:67} 
\varlimsup_{k\to\infty} \; \varlimsup_{n\to\infty} \, \bigl[  G_{0,w(n,k)} -G_{e_2, w(n,k)} \bigr] =\infty
\qquad\text{$\P$-almost surely.} 
\ee
\end{lemma}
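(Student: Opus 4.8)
The quantity $G_{0,w(n,k)}-G_{e_2,w(n,k)}$ is an increment variable of the type controlled by the stationary LPP process, so the plan is to sandwich it between the corresponding increment in a stationary process from a well-chosen direction and then show that this stationary increment blows up. First I would fix $k$ and introduce the stationary LPP process $G^{\rho}_{0,\bbullet}$ with $\rho=\rho(\eta_k)$ the parameter whose characteristic direction is $\eta_k$; by \eqref{xi-rho}, as $\eta_k\to e_1$ we have $\rho(\eta_k)\to 0$. The key comparison inequality is that the ordinary LPP increment $G_{0,w(n,k)}-G_{e_2,w(n,k)}$ dominates the stationary increment $G^{\rho}_{0,w(n,k)}-G^{\rho}_{e_2,w(n,k)}=J^{\rho}_{w(n,k)}$ whenever the boundary weights are coupled so that $J^{\rho}_{je_2}\le\w_{je_2}$ on the $y$-axis and $I^{\rho}_{ie_1}\ge\w_{ie_1}$ on the $x$-axis — this is exactly the kind of crossing/monotonicity comparison used in Lemma~\ref{lem ge1} (via Lemma~\ref{lm:crs}, Lemma~\ref{lm:G13}). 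Concretely, one checks that if the geodesic $\pi^{0,w(n,k)}$ of $G_{0,\bbullet}$ starts with an $e_2$ step then $G_{0,w(n,k)}-G_{e_2,w(n,k)}=\w_{e_2}\ge J^\rho_{e_2}$ trivially, and otherwise a first-passage decomposition over the exit point together with the coupling yields $G_{0,w(n,k)}-G_{e_2,w(n,k)}\ge G^\rho_{0,w(n,k)}-G^\rho_{e_2,w(n,k)}$.

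Second, by Theorem~\ref{th:st-lpp}(ii) the increment $J^{\rho}_{w(n,k)}=G^{\rho}_{0,w(n,k)}-G^{\rho}_{0,w(n,k)-e_2}$ has exactly the $\mathrm{Exp}(\rho)$ distribution for every $n$ — but this is a single-site statement and does not by itself give a blow-up. What I actually want is the stronger structural fact that $G^{\rho}_{0,w(n,k)}-G^{\rho}_{e_2,w(n,k)}$ is, for $n$ large, governed by the sum of boundary $J^{\rho}$-weights up to (minus) the exit point, which is a sum of many i.i.d.\ $\mathrm{Exp}(\rho)$ variables. Precisely, when the endpoint direction $\eta_k$ is to the \emph{right} of the characteristic direction $\xi(\rho)$, the exit point $Z^{\rho}_{0,w(n,k)}$ is negative and of order $n$ with high probability, so the geodesic travels a macroscopic distance up the $y$-axis; along that stretch $G^{\rho}_{0,\bbullet}$ and $G^{\rho}_{e_2,\bbullet}$ see the same bulk weights, and the difference $G^{\rho}_{0,w(n,k)}-G^{\rho}_{e_2,w(n,k)}$ equals $J^{\rho}_{e_2}+J^{\rho}_{2e_2}+\dots$ telescoped down to the exit level — forcing it to be large. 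The cleanest route: rather than matching $\rho$ to $\eta_k$, match $\rho$ to a direction \emph{steeper} (closer to $e_2$) than $\eta_k$, so that $w(n,k)/n=\eta_k$ lies strictly to the right of $\xi(\rho)$; then $n^{-1}Z^{\rho}_{0,w(n,k)}\to -c<0$ a.s., hence for large $n$ the west/south exit happens on the $y$-axis at height $\asymp cn$, and
\[
G^{\rho}_{0,w(n,k)}-G^{\rho}_{e_2,w(n,k)} \;=\; \sum_{j\in\lzb 1,\,-Z^\rho_{0,w(n,k)}\rzb} J^{\rho}_{je_2}\;-\;(\text{a matching sum missing its first term})\;\ge\; J^{\rho}_{e_2}.
\]
That alone is still only one weight; to get divergence I must instead let the \emph{direction} rather than $n$ do the work.

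Third — and this is the real mechanism — I would send $k\to\infty$. Fix a reference parameter $\rho_0\in(0,1)$ with characteristic direction $\xi(\rho_0)$ strictly between $e_2$ and all the $\eta_k$ (possible for $k$ large since $\eta_k\to e_1$). Using the single stationary process $G^{\rho_0}_{0,\bbullet}$ and the comparison from the first paragraph,
\[
\varlimsup_{n\to\infty}\bigl[G_{0,w(n,k)}-G_{e_2,w(n,k)}\bigr]\;\ge\;\varlimsup_{n\to\infty}\bigl[G^{\rho_0}_{0,w(n,k)}-G^{\rho_0}_{e_2,w(n,k)}\bigr].
\]
Since $\eta_k$ lies to the right of $\xi(\rho_0)$, standard exit-point law of large numbers (from Theorem~\ref{tpr6}, Borel–Cantelli on $Z^{\rho_0}_{0,w(n,k)}\le -\delta n$) gives that for a.e.\ $\w$ there is $n_0$ with $Z^{\rho_0}_{0,w(n,k)}\le -\lfloor\delta_k n\rfloor$ for all $n\ge n_0$, where $\delta_k>0$ can be taken bounded away from $0$ as $k\to\infty$ (the gap between $\eta_k$ and $\xi(\rho_0)$ grows). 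On that event $G^{\rho_0}_{0,w(n,k)}-G^{\rho_0}_{e_2,w(n,k)}\ge \sum_{j=2}^{\lfloor\delta_k n\rfloor} J^{\rho_0}_{je_2}$ (the geodesic from $e_2$ cannot use the boundary edge at level $1$), which by the law of large numbers for the i.i.d.\ $\mathrm{Exp}(\rho_0)$ weights $J^{\rho_0}_{je_2}$ tends to $+\infty$ as $n\to\infty$. Hence each inner $\varlimsup$ is $+\infty$ a.s.\ for every large $k$, and \eqref{u:67} follows (in fact the outer $\varlimsup$ is attained term by term). \textbf{The main obstacle} is making the comparison inequality $G_{0,x}-G_{e_2,x}\ge G^{\rho}_{0,x}-G^{\rho}_{e_2,x}$ rigorous and uniform in $x$: it requires the correct coupling of boundary weights ($J^\rho$ below the bulk on the $y$-axis, $I^\rho$ above on the $x$-axis) together with the crossing lemmas (analogues of Lemma~\ref{lm:crs} and Lemma~\ref{lm:G13}) to propagate the inequality from the axes into the bulk, and a careful check that the geodesic from $e_2$ genuinely loses the level-$1$ boundary increment so that the residual sum has $\gtrsim \delta_k n$ terms rather than $O(1)$.
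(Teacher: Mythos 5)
Your high-level instinct — sandwich the bulk increment $G_{0,w(n,k)}-G_{e_2,w(n,k)}$ against a stationary increment and use the exponential distribution — is aligned with the paper. But the argument as written has several genuine errors, and the most serious one is structural.

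\textbf{The inner $\varlimsup$ is not $+\infty$.} You conclude by asserting that for each large $k$ the inner $\varlimsup_{n}$ is already $+\infty$ a.s. This cannot be right: for a fixed direction $\eta_k$, the quantity $G_{0,w(n,k)}-G_{e_2,w(n,k)}$ converges a.s.\ to a finite Busemann increment, which is $\mathrm{Exp}(\rho(\eta_k))$-distributed. The divergence in \eqref{u:67} has to come from the \emph{outer} limit $k\to\infty$, where $\eta_k\to e_1$ forces $\rho(\eta_k)\to 0$ and the $\mathrm{Exp}(\rho(\eta_k))$ limits become stochastically unbounded. A proof that tries to make the inner $\varlimsup$ infinite is aiming at a false intermediate statement.

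\textbf{The exit-point sign is reversed.} You fix $\rho_0$ with $\xi(\rho_0)$ \emph{steeper} than $\eta_k$ (closer to $e_2$), so $w(n,k)$ sits in a flatter direction than characteristic, and then claim $n^{-1}Z^{\rho_0}_{0,w(n,k)}\to -c<0$. This is backwards. When the target direction is flatter than $\xi(\rho_0)$, the stationary geodesic exits on the $x$-axis, i.e.\ $Z^{\rho_0}_{0,w(n,k)}>0$ macroscopically. (You can check this against the macroscopic variational picture: the exit on the $x$-axis at fraction $s$ maximizes $s/(1-\rho)+g(\xi_1-s,\xi_2)$, and the optimal $s$ is positive precisely when $\xi_1/\xi_2>(1-\rho)^2/\rho^2$.) Since the whole mechanism you propose relies on the geodesic running a macroscopic distance \emph{up} the $y$-axis, this kills the argument as stated. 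Flipping the inequality between $\rho_0$ and $\eta_k$ does not rescue it either, because $\eta_k\to e_1$ forces $\eta_k$ flatter than any fixed $\xi(\rho_0)$ eventually.

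\textbf{The claimed lower bound is of the wrong order.} Even on the event of a deep exit, $G^{\rho_0}_{0,w(n,k)}-G^{\rho_0}_{e_2,w(n,k)}$ cannot be $\ge\sum_{j=2}^{\lfloor\delta_k n\rfloor}J^{\rho_0}_{je_2}$; the two geodesics share almost all of their boundary segment, and the difference is a bounded one-edge increment, not a sum of $\Theta(n)$ boundary weights. Relatedly, the identity $G^{\rho}_{0,w(n,k)}-G^{\rho}_{e_2,w(n,k)}=J^{\rho}_{w(n,k)}$ in your first paragraph is not correct: $J^\rho_{w(n,k)}=G^{\rho}_{0,w(n,k)}-G^{\rho}_{0,w(n,k)-e_2}$ is an edge increment at the far corner, not a difference of starting points.

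\textbf{What the paper actually does.} The paper views $G_{0,w(n,k)}-G_{e_2,w(n,k)}$ as a forward increment of the \emph{reverse} LPP process based at the far corner $w_{n,k}$, and compares it with the increment $\wh J^{\lambda_k}_{w(n,k),0}\sim\mathrm{Exp}(\lambda_k)$ of a reverse \emph{stationary} process with parameter $\lambda_k>\rho(\eta_k)$. The choice $\lambda_k>\rho(\eta_k)$ makes the reverse geodesic exit macroscopically on the horizontal boundary, so that $\P(\wh Z^{\lambda_k}_{w(n,k),0}<0)\to 0$ as $n\to\infty$ (a limit-shape computation), and on the complement the comparison $\wh J^{\lambda_k}_{w(n,k),0}\le G_{0,w(n,k)}-G_{e_2,w(n,k)}$ holds. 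This yields $\varliminf_n\P(G_{0,w(n,k)}-G_{e_2,w(n,k)}>r)\ge e^{-\lambda_k r}$, and since $\rho(\eta_k)\to 0$ one can take $\lambda_k\to 0$, sending the lower bound to $1$. So the parameter must shrink with $k$; using a single fixed $\rho_0$ cannot produce the blow-up.

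In short: the idea of comparing with a stationary process is in the right spirit, but you need (i) a reverse process anchored at $w_{n,k}$, not a forward one at $0$; (ii) a parameter $\lambda_k\to 0$ tuned to $\eta_k$, not a fixed $\rho_0$; and (iii) the realization that the conclusion is about the outer $k$-limit, not the inner $n$-limit.
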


\smallskip 

\begin{proof}[Proof of Theorem \ref{th:no-axis}]     It is enough to  prove the case $\evec_1$ for $x=0$.   

Fix $\eta_k$ and $w_{n,k}$ as in Lemma \ref{u:lm7} and let $\Omega_0$ be the event of full probability on which \eqref{u:67} holds.  Fix $\w\in\Omega_0$ and 
suppose that  at this $\w$  there is a   semi-infinite geodesic $\pi=\{\pi_n\}_{n\in\Z_{\ge0}}$  such that $\pi_0=0$, $\pi_\ell=(\ell-1,1)$ for some $\ell\ge 1$, and $\varliminf_{n\to\infty} n^{-1} \pi_n\cdot \evec_2=0$.  We derive a contradiction.   

By connecting $\evec_2=(0,1)$ to the point $\pi_\ell=(\ell-1,1)$ (now fixed)  with a horizontal path, we get the lower bound 
\[  \Gpp_{\evec_2, \pi_n}\ge  \sum_{i=0}^{\ell-1}\w_{(i,1)} + \Gpp_{\pi_{\ell+1}, \pi_n} 
\qquad \text{for } n>\ell. \]
That $\pi$ is a geodesic from $\pi_0=0$ implies  $\Gpp_{0, \pi_n}= \Gpp_{0, \pi_\ell} + \Gpp_{\pi_{\ell+1}, \pi_n} $ for $n>\ell$. 
Thus 
\be\label{406}  \Gpp_{0, \pi_n} -  \Gpp_{\evec_2, \pi_n}\le \Gpp_{0, \pi_\ell}-  \sum_{i=0}^{\ell-1}\w_{(i,1)}
 \qquad \text{for all } n>\ell. \ee
 By the assumptions $\varliminf n^{-1} \pi_n\cdot \evec_2=0$ and $\eta_k\in\,]e_2, e_1[$, and  by the crossing lemma,  for each $k$  there are infinitely many indices $n$ such that 
 \[ \Gpp_{0, \pi_n} -  \Gpp_{\evec_2, \pi_n}\ge \Gpp_{0, w_{n,k}} -  \Gpp_{\evec_2, w_{n,k}}. 
  \]  
Hence  for each $k$, 
 \[  \varlimsup_{n\to\infty} [\Gpp_{0, \pi_n} -  \Gpp_{\evec_2, \pi_n} ] \ge \varlimsup_{n\to\infty} [ \Gpp_{0, w_{n,k}} -  \Gpp_{\evec_2, w_{n,k}}] . 
  \]  
Limit \eqref{u:67} now contradicts \eqref{406} because the right-hand side of \eqref{406} is fixed and finite. 
\end{proof} 

\begin{proof}[Proof of Lemma \ref{u:lm7}] 
Let $r<\infty$ and begin by bounding as follows: 
\be\label{u:102} \begin{aligned} 
&\P\bigl\{ \; \varlimsup_{k\to\infty} \,  \varlimsup_{n\to\infty} \, \bigl[  G_{0,w(n,k)} -G_{e_2, w(n,k)} \bigr]  \ge  r\bigr\}  
\ge \; \varlimsup_{k\to\infty} \, \P\bigl\{ \;  \varlimsup_{n\to\infty} \, \bigl[  G_{0,w(n,k)} -G_{e_2, w(n,k)} \bigr]  >  r\bigr\}.  
\end{aligned} \ee
We show that the last probability converges to one as $k\to\infty$.  

Choose parameters $\lambda_k$ so that 
\be\label{u:112}  1 > \lambda_k> \rho(\eta_k)=\frac{\sqrt{1-\eta_{k,1}}}{\sqrt{1-\eta_{k,1}}+\sqrt{\eta_{k,1}}}.
\ee
Define the reverse stationary LPP processes $\wh G^{\lambda_k}_{w(n,k), x}$ for $x\in w_{n,k}+\Z_{<0}^2$ as in \eqref{Gr11}--\eqref{Gr12},  with parameter $\lambda_k$ and   northeast base point  $w_{n,k}$. 
As before,  for $x\in w_{n,k}+\Z_{<0}^2$,  let 
\[    \wh J^{\lambda_k}_{w(n,k), x} = \wh G^{\lambda_k}_{w(n,k), x} - \wh G^{\lambda_k}_{w(n,k), x+\evec_2} \]
denote vertical increment variables with  distribution 
$\wh J^{\lambda_k}_{w(n,k), x}\sim{\rm Exp}(\lambda_k)$. 
Similarly to the argument in Lemma \ref{lem ge2}, when the geodesic of $\wh G_{w(n,k),0}$  takes a $-e_1$ step from $w_{n,k}$, that is,  $ \wh Z^{\lambda_k}_{w(n,k), 0} >0$,
 the  increments satisfy 
\be\label{u:115} 
\wh J^{\lambda_k}_{w(n,k), 0} \le  
\wh G_{w(n,k),0} - \wh G_{w(n,k), \evec_2} = G_{0,w(n,k)} -G_{e_2, w(n,k)}.
\ee
The inequality  follows from a combination of Lemmas   \ref{lm:G13} and \ref{lm:crs}. 

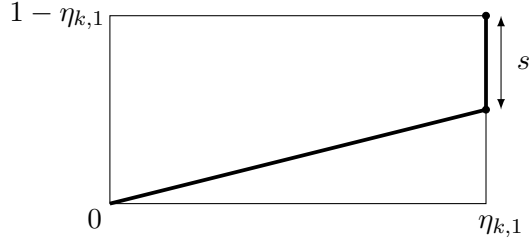
\begin{figure}[pt]
	\begin{tikzpicture}[scale=0.5, every node/.style={transform shape}]
	\draw  (0,0) rectangle (10,5);
	\draw [line width=0.05cm] (0,0) -- (10,2.5);
	\draw [line width=0.05cm] (10,2.5) -- (10,5);
	\draw [fill] (10,2.5) circle [radius=0.09];
	\draw [fill] (10,5) circle [radius=0.09];
	\draw[<->,>=latex,black]   (10.4,2.5) --(10.4,5);
	\node [scale=2]at (-0.4,-0.4) {$0$};
	\node [scale=2]at (-1.4,5) {$1-\eta_{k,1}$};
	\node [scale=2]at (10.4,-0.6) {$\eta_{k,1}$};
	\node [scale=2]at (11,3.75) {$s$};
	\end{tikzpicture}
	\caption{\small When the geodesic  is forced to go  downward from the northeast corner, the geodesic chooses   the   distance $s$  on the east  side  to maximize the sum of   Exp$(\lambda_k)$ weights on the east side and  the bulk LPP value between the origin and the point $(\eta_{k, 1}, 1-\eta_{k,1}-s)$.}
	\label{fig:timo5} 
\end{figure}

To take advantage of this we
record the limiting shape functions.  The stationary LPP process satisfies almost surely 
\be\label{u:104} 
\lim_{n\to\infty}  n^{-1}  \wh G^{\lambda_k}_{w(n,k), 0}
= \frac{\eta_{k,1}}{1-\lambda_k} +  \frac{1-\eta_{k,1}}{\lambda_k} . 
\ee
Let $\wh G^{\lambda_k}_{w(n,k), 0}\bigr[\rim{Z}^{\lambda_k}_{w(n,k), 0} <0\bigr]$ denote the last-passage value computed by maximizing over only those paths that satisfy the condition  $\rim{Z}^{\lambda_k}_{w(n,k), 0} <0$, or equivalently, that take first a $-e_2$ step from $w_{n,k}$.  The limit can be  calculated  from  a macroscopic variational formula (see Figure \ref{fig:timo5} for justification):  
\be\label{u:107} \begin{aligned} 
\lim_{n\to\infty}  
n^{-1}  \wh G^{\lambda_k}_{w(n,k), 0}\bigr[\rim{Z}^{\lambda_k}_{w(n,k), 0} <0\bigr]
&=\sup_{0\le s\le 1-\eta_{k,1}} \Bigl\{  \frac{s}{\lambda_k} + \gpp(\eta_{k,1}, 1-\eta_{k,1}-s)\Bigr\}  \\
& 
=  \gpp(\eta_{k,1}, 1-\eta_{k,1}).  
\end{aligned} \ee
That the supremum is achieved at $s=0$  is a consequence of \eqref{u:112}.    Increasing $\lambda_k$ strictly above the characteristic value $\rho(\eta_k)$ as in \eqref{u:112} has the effect that the geodesic of  $\wh G^{\lambda_k}_{w(n,k), 0}$  spends  a macroscopic distance on the horizontal boundary $w_{n,k}+\Z_{<0}e_1$. 
 Hence forcing the $-e_2$ step from the corner $w_{n,k}$ is suboptimal, and  it can be checked directly that 
\be\label{u:117}
 \frac{\eta_{k,1}}{1-\lambda_k} +  \frac{1-\eta_{k,1}}{\lambda_k} 
 >   \gpp(\eta_{k,1}, 1-\eta_{k,1}).    
 \ee

We deduce a probability bound from \eqref{u:115}. 
\begin{align*}
&\P\bigl(  G_{0,w(n,k)} -G_{e_2, w(n,k)} \le r\bigr) 
\le \P\bigl( \wh Z^{\lambda_k}_{w(n,k), 0} <0\bigr)  + \P( \wh J^{\lambda_k}_{w(n,k), 0} \le r)\\[4pt]  
&\qquad
=   \P\bigl\{ \, \wh G^{\lambda_k}_{w(n,k), 0}  =  \wh G^{\lambda_k}_{w(n,k), 0}[\rim{Z}^{\lambda_k}_{w(n,k), 0} <0]  \, \bigr\} 
+  1 -e^{-\lambda_kr} .  
\end{align*} 
   By \eqref{u:104}, \eqref{u:107},  and \eqref{u:117},  the first probability  on the last line vanishes as $n\to\infty$.   Switch to complements to get 
\[  
 \varliminf_{n\to\infty}  \P\bigl(  G_{0,w(n,k)} -G_{e_2, w(n,k)} > r\bigr) \ge  e^{-\lambda_kr} .
 \] 
From this,  upon replacing $r$ by $r+\e$  for  $\e>0$, 
\begin{align*}
&\P\bigl\{ \;  \varlimsup_{n\to\infty} \, \bigl[  G_{0,w(n,k)} -G_{e_2, w(n,k)} \bigr]  >  r\bigr\} \\
&\quad \ge 
\P\bigl\{  G_{0,w(n,k)} -G_{e_2, w(n,k)}  > r+\e \text{ for infinitely many $n$} \bigr\}
\ge   e^{-\lambda_kr-\lambda_k\e} .  
\end{align*}  
 
  By assumption $\eta_{k,1}\to 1$.  Hence we can satisfy \eqref{u:112} while also having $\lambda_k\to 0$.    Thus the  lower bound   in \eqref{u:102} equals  one.
\end{proof}

\bigskip

\appendix

\section{Queues} \label{app:queues}

 We formulate  last-passage percolation over a bi-infinite strip as  a   queueing operator.   
 The inputs are   two bi-infinite sequences: the {\it inter-arrival process}   $\arrv=(\arr_j)_{j\in\Z}$ and the {\it service process} $\servv=(\serv_j)_{j\in\Z}$.  
 The queueing  interpretation is that $\arr_j$ is the time between the arrivals of customers $j-1$ and $j$ and    $\serv_j$ is the service time of customer $j$.  The operations below are well-defined as long as ${\lim_{m\to-\infty}}  \sum_{i=m}^0 (\serv_i-\arr_{i+1})=-\infty.$  
 
   From  inputs $(\arrv,\servv)$  three output sequences 
\be\label{DSR}   \depav=\Dop(\arrv,\servv),  \quad \sojov=\Sop(\arrv,\servv), \quad\text{and} \quad \wc \servv=\Rop(\arrv,\servv) \ee
  are constructed through explicit mappings: the {\it inter-departure process} $\depav=(\depa_j)_{j\in\Z}$, the {\it sojourn process} $\sojov=(\sojo_j)_{j\in\Z}$, and the {\it dual service times}  $\wc\servv=(\wc\serv_j)_{j\in\Z}$.  
  
  The  formulas are as follows.  Choose a sequence  $G=(G_j)_{j\in\Z}$  that satisfies $\arr_j=G_j-G_{j-1}$.   Define the sequence  $\wt G=(\wt G_j)_{j\in\Z}$ by 
\be\label{m:800}
\wt G_j=\sup_{k:\,k\le j}  \Bigl\{  G_k+\sum_{i=k}^j \serv_i\Bigr\}. 
 \ee
The supremum above  is taken  at some finite $k$.   Then set 
\be\label{DSR4} 
\depa_j =  \wt G_j - \wt G_{j-1}, \quad 
\sojo_j=\wt G_j -  G_j, \quad\text{and} \quad 
\wc \serv_j=\arr_j\wedge \sojo_{j-1}. \ee
The outputs \eqref{DSR4} do not  depend on the choice of $G$ as  long as $\arr_j=G_j-G_{j-1}$. Note that to compute $\{ \depa_j , \sojo_j, \wc \serv_j: j\le m\}$,  only inputs $\{ \arr_j , \serv_j : j\le m\}$ are needed.

  The next lemma is a  deterministic property of the mappings. 
  
\begin{lemma}\label{lm:DR}   The identity 
$\Dop\bigl(\Dop(\barrv, \arrv), \servv\bigr) = \Dop\bigl( \Dop(\barrv, \Rop(\arrv, \servv)), \Dop(\arrv, \servv)\bigr)$ holds whenever the sequences $\arrv, \barrv, \servv$ are such that the operations are well-defined.  
\end{lemma}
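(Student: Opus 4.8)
The plan is to unwind the identity into last-passage form and then reduce it to the invariance of a two-row last-passage field under the basic queueing (RSK-type) local move, which I would prove by induction.

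Set $\depav=\Dop(\arrv,\servv)$, $\sojov=\Sop(\arrv,\servv)$, $\wc\servv=\Rop(\arrv,\servv)$, and fix potentials: $B$ with $\barr_j=B_j-B_{j-1}$, $A$ with $\arr_j=A_j-A_{j-1}$, and $\wt A_j=\sup_{l\le j}\{A_l+\sum_{i=l}^j\serv_i\}$, so that $\wt A$ is a potential for $\depav$, $\sojo_j=\wt A_j-A_j$, and $\wc\serv_j=\arr_j\wedge\sojo_{j-1}$. Unwinding the two nested applications of $\Dop$ (all suprema below are attained at finite indices, since the operations are assumed well-defined), one finds $\Dop(\Dop(\barrv,\arrv),\servv)_j=\phi_j-\phi_{j-1}$ and $\Dop(\Dop(\barrv,\Rop(\arrv,\servv)),\Dop(\arrv,\servv))_j=\psi_j-\psi_{j-1}$, where
\[ \phi_j=\sup_{k\le m\le j}\Bigl\{B_k+\sum_{i=k}^m\arr_i+\sum_{i=m}^j\serv_i\Bigr\},\qquad \psi_j=\sup_{k\le m\le j}\Bigl\{B_k+\sum_{i=k}^m\wc\serv_i+\sum_{i=m}^j\depa_i\Bigr\}. \]
Writing $\phi_j=\sup_k\{B_k+K(j,k)\}$ and $\psi_j=\sup_k\{B_k+K'(j,k)\}$ with $K(j,k)=\sup_{k\le m\le j}\{\sum_{i=k}^m\arr_i+\sum_{i=m}^j\serv_i\}$ for $k\le j$ (and $-\infty$ otherwise) and $K'$ the analogous quantity built from $(\wc\servv,\depav)$, it suffices to prove the pointwise identity $K(j,k)=K'(j,k)$ for all $k\le j$, since this forces $\phi\equiv\psi$ and hence the lemma. (Conversely, the lemma for arbitrary $\barrv$ forces $K=K'$, a useful consistency check.)

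Geometrically $K(j,k)$ is the last-passage value from $(1,k)$ to $(2,j)$ in the two-row directed lattice with weight $\arr_c$ at $(1,c)$ and $\serv_c$ at $(2,c)$, and $K'(j,k)$ is the same field for the weights $(\Rop(\arrv,\servv),\Dop(\arrv,\servv))$; thus $K=K'$ says exactly that the queueing local move $(\arrv,\servv)\mapsto(\Rop(\arrv,\servv),\Dop(\arrv,\servv))$ leaves the two-row last-passage field invariant. I would prove $K=K'$ by induction on the span $n=j-k\ge0$. The base case $n=0$ is conservation, $K(k,k)=\arr_k+\serv_k=\arr_k\wedge\sojo_{k-1}+\serv_k+(\arr_k-\sojo_{k-1})^+=\wc\serv_k+\depa_k=K'(k,k)$. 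For the step, peeling off column $j$ gives the max-plus recursions
\[ K(j,k)=\serv_j+\Bigl(\sum_{i=k}^j\arr_i\Bigr)\vee K(j-1,k),\qquad K'(j,k)=\depa_j+\Bigl(\sum_{i=k}^j\wc\serv_i\Bigr)\vee K'(j-1,k); \]
since $K(j-1,k)=K'(j-1,k)=:W$ by the inductive hypothesis, and using $\depa_j=\serv_j+(\arr_j-\sojo_{j-1})^+$ together with $\sum_{i=k}^j\wc\serv_i=\sum_{i=k}^j\arr_i-\sum_{i=k}^j(\arr_i-\sojo_{i-1})^+$, the step reduces to the scalar identity $A\vee W=(\arr_j-\sojo_{j-1})^++(A-\Delta)\vee W$ with $A=\sum_{i=k}^j\arr_i$ and $\Delta=\sum_{i=k}^j(\arr_i-\sojo_{i-1})^+$.

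I expect this scalar identity to be the main obstacle: it is not a consequence of $A,W,\Delta$ in isolation but requires controlling the ``carry,'' i.e.\ how $W=K(j-1,k)$ relates to $\sojo_{j-1}$ (which itself may draw on columns to the left of $k$). I would handle it by strengthening the induction to carry, besides $K(j,\cdot)=K'(j,\cdot)$, the needed relation between the cut-off field $K(j,\cdot)$ and the sojourn sequence of $(\arrv,\servv)$, exploiting the Lindley recursion $\sojo_i=\serv_i+(\sojo_{i-1}-\arr_i)^+$ and the elementary identities $\wc\serv_i+|\arr_i-\sojo_{i-1}|=\arr_i\vee\sojo_{i-1}$ and $\depa_i\vee\sojo_i=\serv_i+|\arr_i-\sojo_{i-1}|$, then splitting into the cases $\arr_j\le\sojo_{j-1}$ and $\arr_j>\sojo_{j-1}$; the first nontrivial span $n=1$, where the recursions give $K(k+1,k)=\serv_{k+1}+\arr_k+(\arr_{k+1}\vee\serv_k)$ and $K'(k+1,k)=\depa_{k+1}+(\wc\serv_k+\wc\serv_{k+1})\vee(\arr_k+\serv_k)$, already exhibits the mechanism. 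An alternative is to bypass the kernel and argue directly with the potential $\wt A$, using associativity of last-passage values over columns and $\sum_{i=m}^j\depa_i=\wt A_j-\wt A_{m-1}$; this turns the problem into checking that two suprema over $m$ agree, but the same carry bookkeeping reappears.
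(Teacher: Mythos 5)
Your reduction is exactly the paper's: you unwind both sides to the potentials
$\phi_j=\sup_{k}\{B_k+K(j,k)\}$ and $\psi_j=\sup_{k}\{B_k+K'(j,k)\}$ with
$K(j,k)=\max_{k\le m\le j}\{\sum_{i=k}^m\arr_i+\sum_{i=m}^j\serv_i\}$ and the analogous
$K'(j,k)$ built from $(\wc\servv,\depav)$, and note that it suffices to show
$K(j,k)=K'(j,k)$. This is precisely the paper's display \eqref{DR7}. Where you and the
paper part ways is in how this kernel identity is handled: the paper simply delegates it to
Lemma~4.3 of \cite{fan-sepp-arxiv} (``case-by-case analysis''), whereas you attempt a
self-contained induction on the span $n=j-k$.

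The problem is that your induction is not actually carried through, and the missing step is
the entire content of the lemma. Peeling off column $j$ correctly reduces the inductive
step to
\[
A\vee W \;=\; (\arr_j-\sojo_{j-1})^+ \;+\; (A-\Delta)\vee W,
\]
with $A=\sum_{i=k}^j\arr_i$, $W=K(j-1,k)$, $\Delta=\sum_{i=k}^j(\arr_i-\sojo_{i-1})^+$.
As you yourself observe, this identity is false for arbitrary $(A,W,\Delta)$: for instance
$\arr_j\le\sojo_{j-1}$ (so the added term vanishes), $\Delta>0$, and $A>W$ is a
contradiction, so the argument must establish that this configuration cannot occur. That
requires relating $W=K(j-1,k)$ to the sojourn chain $\sojo_{k-1},\dots,\sojo_{j-1}$, and
this relation is delicate: $K(j-1,k)+A_{k-1}\le\wt A_{j-1}$ always, with equality only
when the Lindley supremum defining $\wt A_{j-1}$ is attained at some index $\ge k$, so the
``carry'' you need to track changes nature depending on where that maximum sits.
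You flag this yourself (``I expect this scalar identity to be the main obstacle'') and then
offer two sketches (``I would handle it by strengthening the induction\dots'',
``An alternative is to bypass the kernel\dots'') without executing either. Checking $n=1$
is a sanity check, not a proof of the mechanism. So the proposal correctly reproduces the
paper's reduction and is honest about the gap, but it does not constitute a proof of the
lemma: the combinatorial core that the paper outsources to \cite{fan-sepp-arxiv} is exactly
the part you leave open.
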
 

\begin{proof}
Choose $(A_j)$ and $(B_j)$ so that $A_j-A_{j-1}=\arr_j$ and $B_j-B_{j-1}=\barr_j$.  Then the output of $\Dop(\barrv, \arrv)$  is  the increment sequence of  
\[  \wt B_\ell=\sup_{k\le \ell} \Bigl\{  B_k +  \sum_{i=k}^\ell \arr_i\Bigr\} . \]  
Next,  the output of 
 $\Dop(\Dop(\barrv, \arrv), \servv)$ is  the increment sequence of 
\[   H_m= \sup_{\ell\le m} \Bigl\{  \wt B_\ell +  \sum_{j=\ell}^m \serv_j\Bigr\}
= \sup_{k\le m} \Bigl\{  B_k +  \max_{\ell:\, k\le\ell\le m}  \Bigl[ \, \sum_{i=k}^\ell \arr_i +  \sum_{j=\ell}^m \serv_j\Bigr] \Bigr\}. \]

Similarly,  define first 
\[  \wt A_j=\sup_{k:\,k\le j}  \Bigl\{  A_k+\sum_{i=k}^j \serv_i\Bigr\}
\quad\text{and}\quad 
\wc B_\ell=\sup_{k\le \ell} \Bigl\{  B_k +  \sum_{i=k}^\ell \wc\serv_i\Bigr\}.  
\]
Then  the output of  $\Dop\bigl( \Dop(\barrv, \Rop(\arrv, \servv)), \Dop(\arrv, \servv)\bigr)$ is  the increment sequence of 
\[   \wt H_m= \sup_{\ell\le m} \Bigl\{  \wc B_\ell +  \sum_{j=\ell}^m \wt\arr_j\Bigr\}
= \sup_{k\le m} \Bigl\{  B_k +  \max_{\ell:\, k\le\ell\le m}  \Bigl[ \, \sum_{i=k}^\ell \wc\serv_i +  \sum_{j=\ell}^m \wt\arr_j\Bigr] \Bigr\}. \]

It remains to check that 
\be\label{DR7} 
 \max_{\ell:\, k\le\ell\le m}  \Bigl[ \, \sum_{i=k}^\ell \wc\serv_i +  \sum_{j=\ell}^m \wt\arr_j\Bigr]
 = \max_{\ell:\, k\le\ell\le m}  \Bigl[ \, \sum_{i=k}^\ell \arr_i +  \sum_{j=\ell}^m \serv_j\Bigr] . 
\ee
This can be verified with a case-by-case analysis. See Lemma 4.3 in \cite{fan-sepp-arxiv}. 
\end{proof} 

Specialize to stationary  M/M/1 queues.   Let $\sigma$ be a service rate and $\alpha_1, \alpha_2$ arrival rates.  Assume 
  $\sigma>\alpha_1>\alpha_2>0$.  Let $\barrv^1, \barrv^2, \servv$ be   mutually independent i.i.d.\ sequences with marginals   $\barr^k_j\sim\text{Exp}(\alpha_k)$ for $k\in\{1,2\}$   and $\serv_j\sim\text{ Exp}(\sigma)$.    Define a jointly distributed pair of arrival sequences by 
  $(\arrv^1, \arrv^2)= \bigl( \barrv^1, \Dop( \barrv^2, \barrv^1)\bigr)$.  From these and   services $\servv$, define jointly distributed output variables: 
  \[  
   \depav^k=\Dop(\arrv^k,\servv),  \quad \sojov^k=\Sop(\arrv^k,\servv), \quad\text{and} \quad \wc \servv^k=\Rop(\arrv^k,\servv)
   \quad \text{  for } k\in\{1,2\}.  
   \]  

\begin{lemma}\label{lm:DR5}   We have the following properties. 
\begin{enumerate}[{\rm(i)}]   \itemsep=3pt 
\item Marginally $\arrv^2$ is a sequence of i.i.d.\ ${\rm Exp}(\alpha_2)$ variables. 
\item For   fixed   $k\in\{1,2\}$ and $m\in\Z$, the random variables   $\{\depa^k_j\}_{j\le m}$, $\sojo^k_m$, and $\{\wc\serv^k_j\}_{j\le m}$ are mutually independent with marginal distributions $\depa^k_j\sim\text{\rm Exp}(\alpha_k)$, $\sojo^k_m\sim\text{\rm Exp}(\sigma-\alpha_k)$, and $\wc\serv^k_j\sim\text{\rm Exp}(\sigma)$. 
\item For  a fixed   $k\in\{1,2\}$, sequences $\depav^k$ and $\wc \servv^k$ are mutually independent sequences of i.i.d.\ random variables with marginal distributions $\depa^k_j\sim\text{\rm Exp}(\alpha_k)$ and $\wc\serv^k_j\sim\text{\rm Exp}(\sigma)$. 

\item  $(\depav^1, \depav^2)\deq(\arrv^1, \arrv^2)$, in other words, we have found a distributional fixed point for this joint queueing operator. 

\item   For any $m\in\Z$, the random variables $\{ \arr^2_i\}_{i\le m}$ and $\{\arr^1_j\}_{j\ge m+1}$ are mutually independent.  


\end{enumerate} 
\end{lemma}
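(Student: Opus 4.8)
The plan is to deduce all five parts from one classical fact about a single stationary M/M/1 queue, together with the deterministic identity of Lemma~\ref{lm:DR} and the observation after \eqref{DSR4} that $\{\depa_j,\sojo_j,\wc\serv_j:j\le m\}$ is a function of $\{\arr_j,\serv_j:j\le m\}$ alone. The one ingredient I single out is the \emph{single-queue output theorem}: if $\arrv$ is i.i.d.\ $\mathrm{Exp}(\alpha)$, $\servv$ is i.i.d.\ $\mathrm{Exp}(\sigma)$, they are independent, and $\sigma>\alpha>0$, then for every $m$ the variables $\{\depa_j\}_{j\le m}$, $\sojo_m$, $\{\wc\serv_j\}_{j\le m}$ are mutually independent with $\depa_j\sim\mathrm{Exp}(\alpha)$, $\sojo_m\sim\mathrm{Exp}(\sigma-\alpha)$, $\wc\serv_j\sim\mathrm{Exp}(\sigma)$. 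This is Burke's theorem together with its output refinement; I would either cite it (e.g.\ \cite{bala-cato-sepp}) or give the short induction in $m$. Equations \eqref{m:800}--\eqref{DSR4} yield $\sojo_m=(\sojo_{m-1}-\arr_m)^+ +\serv_m$, $\depa_m=(\arr_m-\sojo_{m-1})^+ +\serv_m$, and $\wc\serv_m=\arr_m\wedge\sojo_{m-1}$; since $(\arr_m,\serv_m)$ is independent of all inputs of index $\le m-1$, the inductive step is exactly the exponential identity that for independent $W\sim\mathrm{Exp}(\sigma-\alpha)$, $A\sim\mathrm{Exp}(\alpha)$, $S\sim\mathrm{Exp}(\sigma)$ the triple $\bigl(A\wedge W,\ (A-W)^+ +S,\ (W-A)^+ +S\bigr)$ is mutually independent with laws $\mathrm{Exp}(\sigma)$, $\mathrm{Exp}(\alpha)$, $\mathrm{Exp}(\sigma-\alpha)$, which I would verify by conditioning on the sign of $A-W$ (memorylessness of $A$ and of $W$) and a short Laplace-transform computation.

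Granting this, parts (i)--(iii) are immediate. For (i), $\arrv^2=\Dop(\barrv^2,\barrv^1)$ is the departure sequence of the queue with arrival rate $\alpha_2$ and service rate $\alpha_1>\alpha_2$, so the output theorem gives $\arrv^2$ i.i.d.\ $\mathrm{Exp}(\alpha_2)$. For (ii), $\arrv^1=\barrv^1$ is i.i.d.\ $\mathrm{Exp}(\alpha_1)$ and $\arrv^2$ is i.i.d.\ $\mathrm{Exp}(\alpha_2)$ by (i); in both cases $\arrv^k$ is a function of $(\barrv^1,\barrv^2)$ and hence independent of $\servv$, so the output theorem applies to $(\arrv^k,\servv)$ and its conclusion is precisely (ii). Part (iii) is (ii) read in the limit $m\to\infty$: $\depav^k$ is i.i.d.\ $\mathrm{Exp}(\alpha_k)$, $\wc\servv^k$ is i.i.d.\ $\mathrm{Exp}(\sigma)$, and the two sequences are independent.

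For (iv), note by (iii) that the pair $(\depav^1,\wc\servv^1)=\bigl(\Dop(\arrv^1,\servv),\Rop(\arrv^1,\servv)\bigr)$ consists of two independent i.i.d.\ sequences of rates $\alpha_1$ and $\sigma$, and $\barrv^2$ is independent of $(\barrv^1,\servv)$ and hence of $(\depav^1,\wc\servv^1)$; thus
\[ (\barrv^2,\wc\servv^1,\depav^1)\deq(\barrv^2,\servv,\arrv^1), \]
a triple of mutually independent i.i.d.\ sequences of rates $\alpha_2,\sigma,\alpha_1$. Applying Lemma~\ref{lm:DR} with $(\barrv,\arrv,\servv)=(\barrv^2,\arrv^1,\servv)$ and using $\arrv^2=\Dop(\barrv^2,\arrv^1)$,
\[ \depav^2=\Dop(\arrv^2,\servv)=\Dop\bigl(\Dop(\barrv^2,\arrv^1),\servv\bigr)=\Dop\bigl(\Dop(\barrv^2,\wc\servv^1),\depav^1\bigr), \]
so $(\depav^1,\depav^2)$ is the image of $(\barrv^2,\wc\servv^1,\depav^1)$ under the measurable map $(\xvec,\yvec,\zvec)\mapsto\bigl(\zvec,\Dop(\Dop(\xvec,\yvec),\zvec)\bigr)$, and the displayed distributional equality gives $(\depav^1,\depav^2)\deq\bigl(\arrv^1,\Dop(\Dop(\barrv^2,\servv),\arrv^1)\bigr)$. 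Finally $\Dop(\barrv^2,\servv)\deq\barrv^2$ by the output theorem and is independent of $\arrv^1=\barrv^1$, so applying $(\uvec,\vvec)\mapsto\bigl(\vvec,\Dop(\uvec,\vvec)\bigr)$ yields $(\depav^1,\depav^2)\deq\bigl(\arrv^1,\Dop(\barrv^2,\arrv^1)\bigr)=(\arrv^1,\arrv^2)$, which is (iv). Part (v) is then pure bookkeeping: by the remark after \eqref{DSR4}, $\{\arr^2_i\}_{i\le m}$, being the departures of the $(\barrv^2,\barrv^1)$-queue up to index $m$, is a function of $\{\barr^2_j,\barr^1_j:j\le m\}$, whereas $\{\arr^1_j\}_{j\ge m+1}=\{\barr^1_j\}_{j\ge m+1}$; since $\barrv^1$ and $\barrv^2$ are independent, these two families are independent, giving (v).

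The only substantive step is the single-queue output theorem; everything else is the deterministic Lemma~\ref{lm:DR} and routine independence arguments. If that theorem is cited the proof is short; if it is proved in place, the heart of the matter is the exponential identity for $\bigl(A\wedge W,(A-W)^+ +S,(W-A)^+ +S\bigr)$, whose independence is slightly counterintuitive because the latter two coordinates share the summand $S$.
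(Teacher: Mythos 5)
Your proposal is correct and takes essentially the same approach as the paper: parts (i)--(iii) are credited to the single-queue Burke-type output theorem (the paper cites \cite{fan-sepp-arxiv} rather than sketching the induction, but the content is identical), part (iv) is the application of Lemma~\ref{lm:DR} to rewrite $\depav^2=\Dop\bigl(\Dop(\barrv^2,\wc\servv^1),\depav^1\bigr)$ followed by the independence and distributional matching of $(\barrv^2,\wc\servv^1,\depav^1)$ with $(\barrv^2,\servv,\arrv^1)$, and part (v) is the same measurability observation that $\{\arr^2_i\}_{i\le m}$ depends only on $\{\barr^1_j,\barr^2_j:j\le m\}$. Your spelled-out exponential identity for $\bigl(A\wedge W,\,(A-W)^+ +S,\,(W-A)^+ +S\bigr)$ and the extra intermediate step in (iv) (pushing forward once more via $\Dop(\barrv^2,\servv)\deq\barrv^2$) are correct elaborations, not a different route.
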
 

\begin{proof}
Parts (i)--(iii) are  basic M/M/1 queueing theory.   Proofs can be found for example in Lemma B.2 in Appendix B of \cite{fan-sepp-arxiv}.  

For part  (iv),   the marginal distributions of $\depav^1$ and $\depav^2$ are the correct ones  by Lemma \ref{lm:DR5}(iii). To establish the correct joint distribution,   the definition of $(\arrv^1, \arrv^2)$ points us to find an i.i.d.\ Exp$(\alpha_2)$  random sequence $\zvec$ that is independent of $\depav^1$ and satisfies $\depav^2=   \Dop( \zvec, \depav^1)$.  From the definitions and  Lemma \ref{lm:DR}, 
\begin{align*}
\depav^2=\Dop(\arrv^2,\servv) =  \Dop\bigl( \Dop( \barrv^2, \arrv^1),\servv\bigr)
= \Dop\bigl( \Dop(\barrv^2, \Rop(\arrv^1, \servv)), \Dop(\arrv^1, \servv)\bigr)
= \Dop\bigl( \Dop(\barrv^2, \wc\servv^1), \depav^1\bigr).
\end{align*} 
By assumption $\barrv^2, \arrv^1, \servv$ are independent. Hence by Lemma \ref{lm:DR5}(iii)   $\barrv^2, \wc\servv^1, \depav^1$ are independent.  So we take $\zvec=\Dop(\barrv^2, \wc\servv^1)$  which is an  i.i.d.\ Exp$(\alpha_2)$    sequence by Lemma \ref{lm:DR5}(iii).  This proves part  (iv).

 We know that marginally $\arrv^1$ and  $\arrv^2$ are i.i.d.\ sequences.  In queueing language observation (v) becomes obvious. Namely, since $\arrv^2  =    \Dop( \barrv^2, \arrv^1)$, the statement is that past inter-departure   times 
$\{ \arr^2_i\}_{i\le m}$ are independent of future inter-arrival times $\{\arr^1_j\}_{j\ge m+1}$.  Rigorously,  \eqref{m:800} and \eqref{DSR4} show  that variables $\{ \arr^2_i\}_{i\le m}$ are functions of $(\{\barr^2_i\}_{i\le m}\,, \{\arr^1_i\}_{i\le m})$ which are independent of $\{\arr^1_j\}_{j\ge m+1}$. 
\end{proof}

\section{Coupling and monotonicity in last-passage percolation} \label{app:lpp} 

%

In this section $\w=(\w_x)_{x\in\Z^2}$ is a fixed assignment of real weights.   $G_{x,y}$ is  the last-passage value defined by \eqref{v:G}. No probability is involved.

\begin{lemma}\label{lm:G13}    Suppose weights $\w$ and $\wt\w$ satisfy $\w_{o+ie_1}\ge\wt\w_{o+ie_1}$,  $\w_{o+je_2}\le\wt\w_{o+je_2}$, and $\w_x=\wt\w_x$ for $i,j\ge 1$ and $x\in o+\Z_{>0}^2$.   As in \eqref{v:G} define LPP processes 
\[  
	\Gpp_{o,y}=\max_{x_{\brbullet}\,\in\,\Pi_{o,y}}\sum_{k=0}^{\abs{y-o}_1}\w_{x_k}
	\quad\text{and}\quad
	\wt\Gpp_{o,y}=\max_{x_{\brbullet}\,\in\,\Pi_{x,y}}\sum_{k=0}^{\abs{y-x}_1}\wt\w_{x_k}
	\quad\text{ for } \; y \in o+\Z_{\geq 0}^2. 
\] 
Then for all  
$y\in o+\Z_{\geq 0}^2$, the increments over nearest-neighbor edges satisfy 
\[  G_{o,y+e_1}-G_{o,y} \ge \wt G_{o,y+e_1}-\wt G_{o,y}
\quad\text{and}\quad 
G_{o,y+e_2}-G_{o,y} \le \wt G_{o,y+e_2}-\wt G_{o,y}.  
\]

\end{lemma}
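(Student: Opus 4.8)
The plan is to prove both increment inequalities at once, by induction on $\abs{y-o}_1$, using the local recursion that last-passage increments obey across a unit lattice square. For a vertex $v$ with $v-e_1,v-e_2\in o+\Z_{\ge0}^2$ abbreviate $I_v=\Gpp_{o,v}-\Gpp_{o,v-e_1}$, $J_v=\Gpp_{o,v}-\Gpp_{o,v-e_2}$, and let $\wt I_v,\wt J_v$ be the corresponding increments of $\wt\Gpp$. The claim to establish is $I_v\ge\wt I_v$ for every $v$ with $v_1>o_1$ and $J_v\le\wt J_v$ for every $v$ with $v_2>o_2$; this is exactly the lemma read at $v=y+e_1$ and $v=y+e_2$. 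The base cases are the boundary axes: if $v=o+ie_1$ with $i\ge1$ then $I_v=\w_{o+ie_1}\ge\wt\w_{o+ie_1}=\wt I_v$, and if $v=o+je_2$ with $j\ge1$ then $J_v=\w_{o+je_2}\le\wt\w_{o+je_2}=\wt J_v$, directly from the hypotheses (on each axis only the relevant one of the two increments is required).

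For the inductive step, fix a bulk vertex $v=o+(m,n)$ with $m,n\ge1$, so that $\w_v=\wt\w_v$. From $\Gpp_{o,v}=\w_v+\Gpp_{o,v-e_1}\vee\Gpp_{o,v-e_2}$ and $\Gpp_{o,v-e_2}-\Gpp_{o,v-e_1}=I_{v-e_2}-J_{v-e_1}$ one obtains the corner update
\begin{equation*}
I_v=\w_v+\bigl(I_{v-e_2}-J_{v-e_1}\bigr)^{+},\qquad
J_v=\w_v+\bigl(J_{v-e_1}-I_{v-e_2}\bigr)^{+},
\end{equation*}
and the same identities with tildes. Both $v-e_2=o+(m,n-1)$ and $v-e_1=o+(m-1,n)$ are strictly closer to $o$ in $\ell^1$-distance and each is of a type already handled (an axis vertex or a bulk vertex), so the induction hypothesis gives $I_{v-e_2}\ge\wt I_{v-e_2}$ and $J_{v-e_1}\le\wt J_{v-e_1}$, hence $I_{v-e_2}-J_{v-e_1}\ge\wt I_{v-e_2}-\wt J_{v-e_1}$. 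Applying the nondecreasing map $t\mapsto t^{+}$ to this difference yields $I_v\ge\wt I_v$, and applying it to the negated difference yields $J_v\le\wt J_v$. Since $\w_v$ cancels between the two processes, the step closes and the induction is complete.

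I do not expect a genuine obstacle: the argument is a routine induction, and the corner update above is just one step of the queueing-type map of Appendix~\ref{app:queues}. The one point that needs care is that the induction hypothesis must carry the conjunction of the two oppositely-signed monotonicities — $e_1$-increments up, $e_2$-increments down — because the corner update couples them: raising the incoming $e_1$-increment $I_{v-e_2}$ pushes the outgoing $e_2$-increment $J_v$ downward. Tracking either inequality in isolation would not be self-sustaining, so the two halves of the lemma must be proved simultaneously.
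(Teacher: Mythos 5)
Your proof is correct and takes essentially the same route as the paper: both arguments set up the axis base cases and then run a joint induction over lattice unit squares, propagating the two oppositely-signed increment inequalities simultaneously. The only difference is that you have made the corner-update identities $I_v=\w_v+(I_{v-e_2}-J_{v-e_1})^+$ and $J_v=\w_v+(J_{v-e_1}-I_{v-e_2})^+$ and the monotonicity of $t\mapsto t^+$ explicit, whereas the paper leaves the one-square inductive step as an exercise.
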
 

\begin{proof}   
The statements are true by construction  for edges $(y, y+e_i)$ that lie on the  axes $o+\Z_{\ge0}e_i$.   Proceed by induction:  assuming the inequalities hold for the edges $(y,y+e_2)$ and $(y, y+e_1)$ , deduce them for the edges $(y+e_2, y+e_1+e_2)$ and $(y+e_1, y+e_1+e_2)$.  
\end{proof}

\begin{lemma}[Crossing Lemma]\label{lm:crs}
   The inequalities below are valid  whenever the last-passage values are defined. 
	 \begin{align}
	 	G_{o+e_1,\,x+e_2}-G_{o+e_1,\,x} &\leq G_{o,\,x+e_2}-G_{o,\,x}\leq G_{o+e_2,\,x+e_2}-G_{o+e_2,\,x}\label{cr1}\\ 
	 	G_{o+e_2,\,x+e_1}-G_{o+e_2,\,x} &\leq G_{o,\,x+e_1}-G_{o,\,x}\leq G_{o+e_1,\,x+e_1}-G_{o+e_1,\,x}. \label{cr2}	 	
	 \end{align}
\end{lemma}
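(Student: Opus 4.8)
The plan is to deduce all four inequalities from a single planarity argument. First I would record that the precondition ``whenever the last-passage values are defined'' amounts here to $x\ge o+e_1+e_2$, and that by replacing $\w$ with its transpose $\w'_{(i,j)}=\w_{(j,i)}$ --- transposition acts on $\Z^2$ by swapping coordinates, hence interchanges $e_1$ and $e_2$ and sends up-right paths to up-right paths, so that the associated last-passage values satisfy $G'_{x,y}=G_{x^{\mathsf T},y^{\mathsf T}}$ with $(i,j)^{\mathsf T}=(j,i)$ --- the pair \eqref{cr1} for $\w$ becomes the pair \eqref{cr2} for $\w'$. Hence it is enough to prove \eqref{cr1}. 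Rewriting its two inequalities as ``supermodular'' statements, the first is $G_{o+e_1,\,x+e_2}+G_{o,\,x}\le G_{o+e_1,\,x}+G_{o,\,x+e_2}$ and the second is $G_{o,\,x+e_2}+G_{o+e_2,\,x}\le G_{o,\,x}+G_{o+e_2,\,x+e_2}$; both have the form $G_{a,c}+G_{b,d}\le G_{a,d}+G_{b,c}$, with $(a,b,c,d)=(o+e_1,o,x+e_2,x)$ in the first case and $(a,b,c,d)=(o,o+e_2,x+e_2,x)$ in the second, and in either case $x\ge o+e_1+e_2$ gives $a\le c$, $a\le d$, $b\le c$, $b\le d$, so every term is a maximum over a nonempty finite family of up-right paths.

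To prove $G_{a,c}+G_{b,d}\le G_{a,d}+G_{b,c}$ I would take geodesics $\pi$ of $G_{a,c}$ and $\sigma$ of $G_{b,d}$, so the left side equals the combined weight of $\pi$ and $\sigma$. If $\pi$ and $\sigma$ share a lattice point $z$ (necessarily $a\le z\le d$ and $b\le z\le c$), then splicing produces two up-right paths --- one following $\pi$ from $a$ to $z$ and then $\sigma$ from $z$ to $d$, the other following $\sigma$ from $b$ to $z$ and then $\pi$ from $z$ to $c$ --- whose combined weight equals that of $\pi$ and $\sigma$ (each pair counts the vertex $z$ exactly twice), and which are bounded by $G_{a,d}$ and $G_{b,c}$ respectively. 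This yields the inequality.

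The one genuinely fussy point, and the part I would spell out carefully, is that $\pi$ and $\sigma$ must indeed share a lattice point. For a column $c$, let $\ul\pi(c)\le\wb\pi(c)$ be the lowest and highest rows visited by $\pi$ on the line $\{x_1=c\}$; since an up-right path's only within-column moves are $e_2$-steps, $\pi$ visits every row in $[\ul\pi(c),\wb\pi(c)]$ and $\ul\pi(c+1)=\wb\pi(c)$, and similarly for $\sigma$. In both cases $\pi$ and $\sigma$ run over a common block of columns $[c_0,x_1]$ --- with $c_0=o_1+1$ in the first case and $c_0=o_1$ in the second --- and reading off the four endpoints gives $\ul\pi(c_0)\le\ul\sigma(c_0)$ together with $\wb\pi(x_1)\ge\wb\sigma(x_1)\ge\ul\sigma(x_1)$. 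Let $c^\star$ be the least $c\in[c_0,x_1]$ with $\wb\pi(c)\ge\ul\sigma(c)$ (it exists, since $c=x_1$ works). If $c^\star=c_0$ then $\ul\pi(c_0)\le\ul\sigma(c_0)\le\wb\pi(c_0)$; if $c^\star>c_0$ then, by minimality and monotonicity of $\ul\sigma$, $\ul\pi(c^\star)=\wb\pi(c^\star-1)<\ul\sigma(c^\star-1)\le\ul\sigma(c^\star)\le\wb\pi(c^\star)$. Either way $\ul\sigma(c^\star)\in[\ul\pi(c^\star),\wb\pi(c^\star)]$, so the lattice point $(c^\star,\ul\sigma(c^\star))$ lies on both $\pi$ and $\sigma$, as required. (An alternative route, avoiding geodesics altogether, is induction on $\abs{x-o}_1$ using the recursion $G_{o,y}=\w_y+G_{o,y-e_1}\vee G_{o,y-e_2}$, but the crossing argument seems cleaner to organize and parallels the use of the crossing lemma elsewhere in the paper.)
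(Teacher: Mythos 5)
Your proof is correct and follows the same route as the paper: both reduce to a ``quadrangle'' inequality, take the two geodesics on the left-hand side, find a crossing point, and splice. You are more explicit than the paper about why the geodesics must cross (planarity via column-interval monotonicity) and about the double-counting of the shared vertex weight, but these are elaborations of the same argument, not a different one.
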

\begin{proof}
	The proofs of all  parts are similar. We prove the second inequality in \eqref{cr1}, that is, 
	\begin{align}\label{cr in0}
		G_{o,\,x+e_2}-G_{o,\,x}\leq G_{o+e_2,\,x+e_2}-G_{o+e_2,\,x}.
	\end{align}
	 The geodesics $\pi_{o,\,x+e_2}$ and  $\pi_{o+e_2,\,x}$ must cross. Let $u$ be the first point where they meet. Note that
	\begin{align}\label{cr in}
		G_{o,u}+G_{u,\,x}\leq G_{o,\,x}\quad \text{and} \quad G_{o+e_2,u}+G_{u,\,x+e_2}\leq G_{o+e_2,\,x+e_2}.
	\end{align}
	Add the two inequalities in \eqref{cr in} and rearrange to obtain \eqref{cr in0}.
	
	This inequality can be proved also from Lemma \ref{lm:G13}, by writing $G_{o+e_2,\,x+e_2}-G_{o+e_2,\,x}=\wt G_{o,\,x+e_2}-\wt G_{o,\,x}$ with environment $\wt\w_{o+y}=\w_{o+y}$ when $y_2>0$ and $\wt\w_{o+ie_1}=-M$ for  large enough  $M$.    
\end{proof}

Fix base points  $u\le v$ on $\Z^2$.   On the quadrant  $v+\Z_{\ge0}^2$,  put a corner weight $\eta_v=0$ and  define boundary weights 
 \be\label{eta6} \eta_{v+ke_i} =  G_{u,\,v+ke_i}- G_{u,\,v+(k-1)e_i} 
 \qquad\text{for $k\in\Z_{>0}$ and $i\in\{1,2\}$. }\ee
    In the bulk use $\eta_x=\w_x$ for $x\in v+\Z_{>0}^2$.    Denote the LPP process in $v+\Z_{\ge0}^2$ that uses weights $\{\eta_x\}_{x\,\in\, v+\Z_{\ge0}^2}$ by 
 \be\label{wtG8}   G^{[u]}_{v,\,x}=\max_{x_{\brbullet}\,\in\,\Pi_{v,\,x}}  \sum_{i=0}^{\abs{x-v}_1} \eta_{x_i}, \qquad 
 x\in v+\Z_{\ge0}^2. \ee
 The superscript $[u]$ indicates  that   $G^{[u]}$ uses boundary weights determined by the process $G_{u,\bbullet}$  with base point $u$.     Figure \ref{fig-app1}    illustrates the next lemma.  The proof of the lemma is elementary. 

\begin{figure}
\begin{center}
\begin{picture}(200,140)(20,-10)
\put(40,0){\line(1,0){170}} 

 \put(90,40){\line(1,0){120}}\put(210,0){\line(0,1){110}}
\put(40,110){\line(1,0){170}}
\put(40,0){\line(0,1){110}} \put(90,40){\line(0,1){70}}
 
\put(37,-3){\Large$\bullet$} 
\put(30,-2){\small$u$}

\put(86.5,36.5){\Large$\bullet$} 
\put(80,30){\small$v$}

\put(126.5,36.5){\Large$\bullet$} 
\put(134,32){\small$x$}

%

\put(206,106){\Large$\bullet$} 
\put(215,107){\small$y$}


\linethickness{3pt} 
\put(44.5,0){\line(1,0){72}}  \put(115,0){\line(0,1){21}} \put(114,20){\line(1,0){17.5}}
\put(130,20){\line(0,1){16}}  \put(130,44){\line(0,1){16}} 
\put(128.5,61.5){\line(1,0){60}} \put(189,60){\line(0,1){21.5}}
\put(189,80){\line(1,0){22}}  \put(210,78.5){\line(0,1){27}}
 \multiput(94.5,40)(8.5,0){4}{\line(1,0){5.5}}

 \end{picture}
\end{center}  
\caption{ \small Illustration of Lemma \ref{app-lm1}. Path $u$-$x$-$y$ is a geodesic  of  $G_{u,y}$ and path $v$-$x$-$y$ is a geodesic of  $G^{[u]}_{v,y}$. } \label{fig-app1}
\end{figure}

\begin{lemma} \label{app-lm1}  Let $u\le v\le y$ in $\Z^2$.  Then  
$G_{u,y}=G_{u,v}+G^{[u]}_{v,y}$.    The restriction of any geodesic of $G_{u,y}$ to $v+\Z_{\ge0}^2$ is part of a geodesic of  $G^{[u]}_{v,y}$.    The edges with one endpoint in $v+\Z_{>0}^2$ that belong to 
a geodesic of $G^{[u]}_{v, y}$  extend to a geodesic of $G_{u,y}$.  
\end{lemma}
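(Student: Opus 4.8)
The plan is to prove all three assertions together by tracking how an arbitrary up-right path meets the boundary of the quadrant $Q=v+\Z_{\ge0}^2$. First I would record two elementary structural facts. (1) Every up-right path from $u$ to $y$ enters $Q$ at a unique first vertex $p$, and since $u\le v$ this $p$ must lie on one of the two boundary rays $v+\Z_{\ge0}e_1$ or $v+\Z_{\ge0}e_2$ (with $p=v$ when it lies on both); once the path is in $Q$ it remains in $Q$, so the path splits as the concatenation of an up-right path from $u$ to $p$ with an up-right path from $p$ to $y$ inside $Q$. (2) Every path attaining $G^{[u]}_{v,y}$ runs along one axis of $Q$ to some vertex $v+ke_i$, then steps into the bulk $v+\Z_{>0}^2$ and stays there; along the axis segment the weights $\eta$ telescope, $\sum_{j=1}^k \eta_{v+je_i}=G_{u,v+ke_i}-G_{u,v}$, while in the bulk $\eta$ coincides with $\w$.

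For the identity $G_{u,y}=G_{u,v}+G^{[u]}_{v,y}$ I would prove two inequalities. For ``$\ge$'': take a path realizing $G^{[u]}_{v,y}$, running along an axis to $v+ke_i$ and then through the bulk; concatenating an optimal $u$-to-$(v+ke_i)$ path with that bulk tail gives a legitimate up-right path from $u$ to $y$, and by the telescoping identity its $\w$-value is exactly $G_{u,v}+G^{[u]}_{v,y}$. For ``$\le$'': take a geodesic $\pi$ of $G_{u,y}$, let $p=v+ke_i$ be its first entry vertex into $Q$ and let $v+k'e_i$ (with $k'\ge k$) be the last vertex of $\pi$ on that axis before it steps into the bulk; replace the part of $\pi$ from $u$ up to $v+k'e_i$ by an optimal $u$-to-$(v+k'e_i)$ path (using $G_{u,v+ke_i}+\sum_{j=k+1}^{k'}\w_{v+je_i}\le G_{u,v+k'e_i}$), and feed the axis contribution into $G^{[u]}_{v,y}$ via the telescoping identity; this yields $G_{u,y}\le G_{u,v+k'e_i}+(\w\text{-value of the bulk part of }\pi)\le G_{u,v}+G^{[u]}_{v,y}$.

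Chaining the two inequalities forces every intermediate estimate in the ``$\le$'' argument to be an equality, and these equalities are precisely the two geodesic statements. Indeed the equalities say that the bulk part of $\pi$ together with the axis segment from $v$ to $v+k'e_i$ realizes $G^{[u]}_{v,y}$, so the restriction of $\pi$ to $Q$ is part of a $G^{[u]}_{v,y}$-geodesic; conversely, given a $G^{[u]}_{v,y}$-geodesic $\tau$ that leaves the axis at $v+k'e_i$, prepending an optimal $u$-to-$(v+k'e_i)$ path to the bulk portion of $\tau$ and using the telescoping identity once more produces a path of $\w$-value $G_{u,v}+G^{[u]}_{v,y}=G_{u,y}$, i.e. a $G_{u,y}$-geodesic, and it contains exactly the edges of $\tau$ having an endpoint in the open quadrant $v+\Z_{>0}^2$.

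The whole argument is bookkeeping with the definition \eqref{v:G} and the one-step bound $G_{u,z}+\w_{z'}\le G_{u,z'}$ for a lattice step $z\to z'$, so there is no real obstacle. The only point requiring genuine care is the axis-to-bulk interface: one must handle the $e_1$- and $e_2$-branches symmetrically, and treat the degenerate configurations where $y$ already lies on an axis of $Q$, or where $p=v$ and the path turns off the axis immediately, so that in every case no vertex weight is double-counted and the telescoping sums are read off correctly.
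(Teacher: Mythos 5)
The paper offers no proof of Lemma \ref{app-lm1}; it simply calls it ``elementary'' and relies on Figure \ref{fig-app1} for intuition. Your argument is correct and is essentially the natural one the authors presumably had in mind. The two structural observations you isolate are exactly right: because $Q=v+\Z_{\ge0}^2$ and the open quadrant $v+\Z_{>0}^2$ are both up-right closed, any up-right path from $u$ (with $u\le v$) enters $Q$ once on one of the two boundary rays, and any path contributing to $G^{[u]}_{v,y}$ consists of an axis prefix from $v$ followed by a bulk tail; the telescoping $\sum_{j=1}^{k}\eta_{v+je_i}=G_{u,v+ke_i}-G_{u,v}$ (together with $\eta_v=0$) converts between the two value functions without double-counting. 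The ``$\ge$'' direction by concatenation, the ``$\le$'' direction by splitting a $G_{u,y}$-geodesic at $p=v+ke_i$ and at the last axis vertex $v+k'e_i$, and the observation that the resulting chain of inequalities must collapse to equalities all check out, and the two geodesic assertions drop out of those equalities exactly as you say. You also correctly flag the only places where care is needed: the $e_1/e_2$ symmetry, the case $p=v$, and the degenerate case where $y$ lies on an axis of $Q$ (so that the set of bulk edges is empty and the third assertion is vacuous). This is a complete and sound proof of the lemma.
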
  

  Assume now that the weights are such that geodesics are unique. Define the  exit point $Z_{u,\,p}$   as in \eqref{Zdef}.  For $k\ge 1$ let $Z^{[u]}_{u+ke_1, \,p}$ be the exit point of the geodesic of $G^{[u]}_{u+ke_1, \,p}$. The lemma below follows from taking $v=u+ke_1$ in Lemma \ref{app-lm1}. 

\begin{lemma}\label{lm:shift} 
For positive integers $m$,   $Z_{u,\,p}=k+m$  if and only if  $Z^{[u]}_{u+ke_1, \,p}=m$.
\end{lemma}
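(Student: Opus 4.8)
The statement to prove is Lemma~\ref{lm:shift}: for positive integers $m$, one has $Z_{u,\,p}=k+m$ if and only if $Z^{[u]}_{u+ke_1,\,p}=m$. The natural strategy is to deduce this directly from the decomposition and geodesic-matching given in Lemma~\ref{app-lm1} applied with $v=u+ke_1$. First I would recall what $Z_{u,p}$ and $Z^{[u]}_{u+ke_1,p}$ measure: $Z_{u,p}$ is the signed exit point of the (unique) geodesic $\pi^{u,p}$ from the west and south boundaries of $u+\Z_{\ge0}^2$; here $Z_{u,p}=k+m$ with $k+m>0$ means $\pi^{u,p}$ steps along the $x$-axis from $u$ for exactly $k+m$ unit steps and then leaves the axis with an $e_2$-step, i.e.\ $\pi^{u,p}$ passes through $u+(k+m)e_1$ and then through $u+(k+m)e_1+e_2$. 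Similarly $Z^{[u]}_{u+ke_1,p}=m$ (with $m>0$) means the geodesic of $G^{[u]}_{u+ke_1,p}$ runs along the $x$-axis of the shifted quadrant $u+ke_1+\Z_{\ge0}^2$ for exactly $m$ steps before turning up.

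\textbf{Key steps.} I would argue both implications at once using the correspondence of geodesics in Lemma~\ref{app-lm1}. Set $v=u+ke_1$. Note that since $m\ge 1$, the condition $Z_{u,p}=k+m$ in particular forces $\pi^{u,p}$ to pass through $v=u+ke_1$ (because $k+m\ge k+1>k$, so the first $k$ axis-steps of $\pi^{u,p}$ certainly occur), so the hypothesis $u\le v\le p$ of Lemma~\ref{app-lm1} is relevant and the restriction of $\pi^{u,p}$ to $v+\Z_{\ge0}^2$ is the geodesic of $G^{[u]}_{v,p}$. For the forward direction: if $Z_{u,p}=k+m$, then $\pi^{u,p}$ moves along the axis through $u, u+e_1,\dots, u+(k+m)e_1$ and then steps to $u+(k+m)e_1+e_2$. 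Its restriction to $v+\Z_{\ge0}^2$ therefore starts at $v=u+ke_1$, moves along the $x$-axis of that quadrant through exactly $m$ further unit steps to $v+me_1=u+(k+m)e_1$, and then leaves via $e_2$; since by Lemma~\ref{app-lm1} this restriction is the geodesic of $G^{[u]}_{v,p}$ and geodesics are unique, this says precisely $Z^{[u]}_{v,p}=m$. For the converse: if $Z^{[u]}_{v,p}=m$ with $m\ge 1$, then the geodesic of $G^{[u]}_{v,p}$ runs along the axis of $v+\Z_{\ge0}^2$ for $m$ steps and then turns up; by the last sentence of Lemma~\ref{app-lm1} (edges with an endpoint in $v+\Z_{>0}^2$ belonging to this geodesic extend to a geodesic of $G_{u,p}$), together with the decomposition $G_{u,p}=G_{u,v}+G^{[u]}_{v,p}$, the unique geodesic $\pi^{u,p}$ must agree with the straight axis path $u\to v=u+ke_1$ on the segment $\lzb u,v\rzb$ and then continue as the geodesic of $G^{[u]}_{v,p}$; hence $\pi^{u,p}$ runs along the $x$-axis for exactly $k+m$ steps before turning up, i.e.\ $Z_{u,p}=k+m$.

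\textbf{Loose ends and the main obstacle.} Two small points need care. First, uniqueness of geodesics is assumed in the paragraph preceding the lemma, so there is a single $\pi^{u,p}$ and a single geodesic of $G^{[u]}_{v,p}$; this is what makes ``the restriction equals the geodesic of $G^{[u]}_{v,p}$'' an equality rather than just an inclusion, and it is essential to pin down the exit point exactly. Second, one must check the bookkeeping that an exit-point value $k+m\ge k+1$ for $\pi^{u,p}$ really does force $\pi^{u,p}$ through $v$ — this is immediate from the definition \eqref{Zdef}, since a positive exit point $\ell$ means the geodesic's first $\ell$ steps are all $e_1$-steps, so in particular its first $k$ steps are, placing it at $u+ke_1=v$. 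I do not anticipate a genuine obstacle here; the content of the lemma is essentially a restatement of Lemma~\ref{app-lm1} once one unpacks the definition of $Z$, and the only thing requiring a moment's thought is making the two directions symmetric and correctly handling the hypothesis $m\ge 1$ (without which the statement would be vacuous or would need the alternative involving an initial $e_2$-step from $u$, which cannot happen when the exit point is declared to be $k+m\ge k+1>0$).
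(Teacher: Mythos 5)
Your proof is correct and takes exactly the route the paper intends: the paper dispatches Lemma~\ref{lm:shift} in one sentence (``follows from taking $v=u+ke_1$ in Lemma~\ref{app-lm1}''), and you are simply filling in the routine details — observing that a positive exit point $k+m\ge k+1$ forces $\pi^{u,p}$ through $v=u+ke_1$, invoking uniqueness to turn the ``restriction is part of a geodesic'' clause of Lemma~\ref{app-lm1} into an equality of paths, and noting that an up-right path reaching $u+(k+m)e_1$ from $u$ must do so along the axis. No issues.
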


\section{Random walk bounds}

\begin{lemma}\label{l:rw1}
	Let $\alpha > \beta > 0$, and $S_n = \sum_{k=1}^n Z_k$ be a random walk with step distribution $Z_k \sim {\rm Exp}(\alpha) - {\rm Exp}(\beta)$ {\rm(}difference of two independent exponentials{\rm)}. Then there is an absolute constant $C$ independent of all the parameters such that for $n\in \Z_{>0}$,  
	\beq \label{rw34.8} \mathbb{P} (S_1>0, S_2 >0, \cdots , S_n > 0) \leq \frac{C}{\sqrt n} \left(1- \frac{(\alpha-\beta)^2}{(\alpha+\beta)^2} \right)^n\eeq
	and 
	\beq \label{rw34.9}\mathbb{P} (S_1<0, S_2 <0, \cdots , S_n < 0) \leq \frac{C}{\sqrt n} \left(1- \frac{(\alpha-\beta)^2}{(\alpha+\beta)^2} \right)^n + \frac{\alpha - \beta}{\alpha}.\eeq

\end{lemma}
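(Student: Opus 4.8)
The plan is to prove both inequalities by an exponential change of measure (Cramér tilt) that transforms the walk into a \emph{symmetric} one; the $n^{-1/2}$ factor will then come from the distribution-free Sparre Andersen persistence identity, while the extra term $\tfrac{\alpha-\beta}{\alpha}$ in \eqref{rw34.9} will be identified with the atom at $0$ of the stationary workload of the M/M/1 queue underlying the step distribution.

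First I would record the step moment generating function. Writing $Z_1=X-Y$ with $X\sim{\rm Exp}(\alpha)$, $Y\sim{\rm Exp}(\beta)$ independent, one has $\phi(\theta):=\E[e^{\theta Z_1}]=\tfrac{\alpha\beta}{(\alpha-\theta)(\beta+\theta)}$ for $-\beta<\theta<\alpha$, which is minimized at $\theta_*=\tfrac{\alpha-\beta}{2}\in(0,\alpha)$ with $\phi(\theta_*)=\tfrac{4\alpha\beta}{(\alpha+\beta)^2}=1-\tfrac{(\alpha-\beta)^2}{(\alpha+\beta)^2}$ --- exactly the base of the bounds. Since $Z_1$ has density proportional to $e^{-\alpha z}\mathbf{1}\{z>0\}+e^{\beta z}\mathbf{1}\{z<0\}$, tilting by $e^{\theta_* z}$ yields a density proportional to $e^{-\frac{\alpha+\beta}{2}|z|}$, i.e.\ a \emph{symmetric}, mean-zero Laplace law. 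Define $\widetilde{\P}$ on $\sigma(Z_1,\dots,Z_n)$ by $d\widetilde{\P}/d\P=\phi(\theta_*)^{-n}e^{\theta_* S_n}$; under $\widetilde{\P}$ the increments are i.i.d.\ symmetric and continuous, so by the classical Sparre Andersen identity $\widetilde{\P}(S_k>0,\ 1\le k\le n)=\widetilde{\P}(S_k<0,\ 1\le k\le n)=\binom{2n}{n}4^{-n}\le C_0 n^{-1/2}$ with $C_0$ an absolute constant. This is the key point: after tilting the walk still depends on $\alpha,\beta$, but the Sparre Andersen value does not.

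For \eqref{rw34.8}, undo the tilt: on $\{S_k>0,\ 1\le k\le n\}$ we have $e^{-\theta_* S_n}\le 1$ since $\theta_*>0$, hence $\P(S_k>0,\ 1\le k\le n)=\phi(\theta_*)^n\,\widetilde{\E}\big[e^{-\theta_* S_n}\mathbf{1}\{S_k>0,\ 1\le k\le n\}\big]\le \phi(\theta_*)^n\,\widetilde{\P}(S_k>0,\ 1\le k\le n)\le C_0 n^{-1/2}\phi(\theta_*)^n$. For \eqref{rw34.9} I would use a ladder-epoch decomposition. Up to a $\P$-null set (continuity of $S_k$), $\{S_k<0,\ 1\le k\le n\}=\{H>n\}$ where $H=\inf\{k\ge1:S_k>0\}$ is the first strict ascending ladder epoch; write $\{H>n\}$ as the disjoint union $\{H=\infty\}\cup\{n<H<\infty\}$. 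Interpreting $Z_i=X_i-Y_i$ ($X_i$ a service time, $Y_i$ an inter-arrival time) as the net input of a stable M/M/1 queue with traffic intensity $\rho=\beta/\alpha<1$, the stationary workload $M=\sup_{k\ge0}S_k$ satisfies $\P(M=0)=1-\rho=\tfrac{\alpha-\beta}{\alpha}$ and $\P(M>x)=\rho\,e^{-(\alpha-\beta)x}$; in particular $\P(H=\infty)=\P(M=0)=\tfrac{\alpha-\beta}{\alpha}$, the explicit term in \eqref{rw34.9}. For the remainder, the strong Markov property at the deterministic time $n$ gives $\P(n<H<\infty)=\E\big[\mathbf{1}\{S_k<0,\ 1\le k\le n\}\,\P(M>-S_n\mid S_n)\big]=\tfrac{\beta}{\alpha}\E\big[\mathbf{1}\{S_k<0,\ 1\le k\le n\}\,e^{(\alpha-\beta)S_n}\big]$. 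Because $\alpha-\beta-\theta_*=\theta_*$, tilting by $\theta_*$ converts this to $\tfrac{\beta}{\alpha}\phi(\theta_*)^n\,\widetilde{\E}\big[\mathbf{1}\{S_k<0,\ 1\le k\le n\}\,e^{\theta_* S_n}\big]\le \phi(\theta_*)^n\,\widetilde{\P}(S_k<0,\ 1\le k\le n)\le C_0 n^{-1/2}\phi(\theta_*)^n$, using $e^{\theta_* S_n}\le 1$ on $\{S_n<0\}$ and $\rho<1$. Adding the two pieces gives \eqref{rw34.9}.

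The only genuine obstacle is obtaining the $n^{-1/2}$ factor with a constant uniform in $\alpha$ and $\beta$; this is resolved by the algebraic coincidence that the Cramér-minimizing tilt $\theta_*=(\alpha-\beta)/2$ symmetrizes the asymmetric-Laplace step law, so the distribution-free Sparre Andersen identity applies verbatim. A secondary check is that the same $\theta_*$ is precisely tuned to the $e^{-(\alpha-\beta)x}$ tail of the M/M/1 stationary workload, which is exactly what makes the remainder term in the last step collapse to $\phi(\theta_*)^n\,\widetilde{\P}(\cdot)$; both of these are short verifications once the setup is in place.
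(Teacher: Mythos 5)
Your argument is correct, but it follows a genuinely different route from the paper. The paper's proof is combinatorial: it cites an exact formula (Lemma B.3 of \cite{fan-sepp-arxiv}) for the ruin-time probability $P(S_1>0,\dots,S_{n-1}>0,S_n<0)=C_{n-1}\,\alpha^n\beta^{n-1}/(\alpha+\beta)^{2n-1}$, with $C_n$ the Catalan numbers, and obtains \eqref{rw34.8} by summing the tail of this series and invoking $C_{k}\sim 4^k/\sqrt{\pi k^{3}}$; for \eqref{rw34.9} it swaps $\alpha\leftrightarrow\beta$ and adds $P(A^{\beta,\alpha}_\infty)=(\alpha-\beta)/\alpha$, citing Resnick and Feller for this ruin probability. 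You instead make the pleasing observation that the Cram\'er tilt $\theta_*=(\alpha-\beta)/2$ \emph{symmetrizes} the asymmetric-Laplace step law (tilted density $\propto e^{-\frac{\alpha+\beta}{2}|z|}$) at the same moment that it minimizes the mgf, so $\phi(\theta_*)=4\alpha\beta/(\alpha+\beta)^2$ is exactly the base of the bound, and the distribution-free Sparre Andersen identity $\widetilde\P(S_1>0,\dots,S_n>0)=\binom{2n}{n}4^{-n}$ supplies the $n^{-1/2}$ factor with an absolute constant; then you unwind the tilt by dropping $e^{\mp\theta_* S_n}\le 1$ on the relevant event. For \eqref{rw34.9} your ladder-epoch split $\{H>n\}=\{H=\infty\}\cup\{n<H<\infty\}$ together with the M/M/1 stationary workload $\P(M=0)=(\alpha-\beta)/\alpha$, $\P(M>x)=(\beta/\alpha)e^{-(\alpha-\beta)x}$ is doing the same job as the paper's $P(A^{\beta,\alpha}_\infty)$, but you go one step further and show the remainder $\P(n<H<\infty)$ also collapses under the tilt thanks to the algebraic coincidence $\alpha-\beta-\theta_*=\theta_*$. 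The two routes are ultimately converting the same Catalan asymptotics $\binom{2n}{n}4^{-n}\sim(\pi n)^{-1/2}$ into the statement: the paper reaches it through an explicit formula for the ruin-time distribution of this particular walk, while you reach it through the universal (continuous, symmetric) persistence probability after a measure change. Your version is more self-contained if one is willing to take Sparre Andersen as standard; the paper's is more elementary at the cost of importing a model-specific combinatorial identity. One small thing worth making explicit if you write this up: the equality $\{S_1<0,\dots,S_n<0\}=\{H>n\}$ and the identification $\{H=\infty\}=\{M=0\}$ both require continuity of the step distribution to dismiss $\{S_k=0\}$, which you have here but should flag.
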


\begin{proof}
	Define the events 
	\beq\label{AB8}    A^{\alpha,\beta}_n =\{  S_1>0, \dotsc, S_{n}>0\} 
	\quad\text{and}\quad  
	B^{\alpha,\beta}_n =\{  S_1>0, \dotsc, S_{n-1}>0, \, S_n<0\}  \eeq
	for $n\in\Z_{>0}$ and also the decreasing  limit $A^{\alpha,\beta}_\infty =\bigcap_{n\ge 1} A^{\alpha,\beta}_n$. 
	Then 
	\beq\label{AB13}   P(A^{\alpha,\beta}_n) =\sum_{k=n+1}^\infty P(B^{\alpha,\beta}_k)  + P(A^{\alpha,\beta}_\infty). 
	\eeq 
	 Lemma B.3 in  Appendix B of  \cite{fan-sepp-arxiv} calculated 
	\beq \label{eq:B_n}
	P(B^{\alpha,\beta}_n)
	= C_{n-1}\,  \frac{\alpha^{n}\beta^{n-1}}{(\alpha+\beta)^{2n-1}}
	\eeq
	where  $C_n={\frac{1}{n+1}} \binom{2n}{n}$, $n\ge 0$,  are  the Catalan numbers.  	Note that parameters $\alpha$ and $\beta$ are switched around here compared with Lemma B.3   of  \cite{fan-sepp-arxiv}.   From   $\binom{2n}{n}2^{-2n}\sim(\pi n)^{-1/2}$,  we can  fix  a constant  $c_0 $  such that 
	$  C_{k-1}\le {c_0 4^{k-1}}{k^{-3/2}} $.

	The assumption  $\alpha>\beta$ gives  $EZ_k=\alpha^{-1}-\beta^{-1}<0$,  and hence  $\sum_{n\ge 1}P(B^{\alpha,\beta}_n)=1$ and $P(A^{\alpha,\beta}_\infty)=0$.  Thus  \eqref{AB13}  and \eqref{eq:B_n}, together with  $\sum_{k=n+1}^\infty k^{-3/2}\le 2n^{-1/2}$,  give 
	\beq\label{AB15} \begin{aligned}
		P(A^{\alpha,\beta}_n)  
		&= \frac{\alpha}{\alpha+\beta}  \sum_{k=n+1}^\infty C_{k-1}\,\biggl(  \frac{\alpha\beta}{(\beta+\alpha)^2}\biggr)^{k-1} 
		\le  \frac{c_0 \alpha}{\alpha+\beta}  \sum_{k=n+1}^\infty k^{-3/2} \Bigl( 1-\frac{(\alpha-\beta)^2}{(\alpha+\beta)^2}\,\Bigr)^{k-1} \\
		&\le \frac{2c_0 \alpha}{\alpha+\beta} \cdot\frac1{\sqrt n} \Bigl( 1-\frac{(\alpha-\beta)^2}{(\alpha+\beta)^2}\,\Bigr)^{n}. 
	\end{aligned}\eeq

	
%
	
%

	Since  $-S_n$ is obtained from $S_n$ by switching $\alpha$ and $\beta$ around, 
	\begin{align*}
	P\bigl(S_1<0, \dotsc, S_{n}<0\bigr) = P(A^{\beta,\alpha}_n) 
	=\sum_{k=n+1}^\infty P(B^{\beta,\alpha}_k)   + P(A^{\beta,\alpha}_\infty) . 
	\end{align*} 
	Bound the series above as in \eqref{AB15} (with $\alpha$ and $\beta$ interchanged) and add  $P(A^{\beta,\alpha}_\infty)= \frac{\alpha-\beta}\alpha$. 
	This last fact 
	appears  on p.~600 of Resnick \cite{resn} and in Example VI.8(b) on p.~193 of Feller II \cite{fellII}. 
	\end{proof}

\medskip 

\small

\bibliographystyle{plain}

\bibliography{../Timo_old_bib}
 
\end{document}